\newtheorem{definition}{Definition}[section]
\newtheorem{remark}[definition]{Remark} 
\newtheorem{proposition}[definition]{Proposition}
\newtheorem{theorem}[definition]{Theorem}
\newtheorem{corollary}[definition]{Corollary}
\newtheorem{lemma}[definition]{Lemma}
\newtheorem{example}[definition]{Example}
\numberwithin{equation}{section}
\DeclareMathOperator{\Id}{Id}
\DeclareMathOperator{\re}{Re}
\DeclareMathOperator{\diverg}{div}
\DeclareMathOperator{\SDiff}{SDiff}
\DeclareMathOperator{\SoDiff}{S_0Diff}
\DeclareMathOperator{\SVect}{SVect}
\DeclareMathOperator{\SoVect}{S_0Vect}
\DeclareMathOperator{\ad}{ad}
\DeclareMathOperator{\Ad}{Ad}
\DeclareMathOperator{\RCT}{R}
\DeclareMathOperator{\sg}{\rm sgrad}
\DeclareMathOperator{\Ric}{Ric}
\newcommand{\p}{\partial}
\newcommand{\mb}{\mathbb}
\newcommand{\mf}{\mathfrak}
\newcommand{\norm}[1]{\left\lVert#1\right\rVert}
\newcommand{\mcE}{\mathcal{E}}
\title[Curvatures of SDiff$(\mathbb{K})$ and SDiff$(\mathbb{RP}^2)$. ]{Curvatures of  Measure-Preserving Diffeomorphism Groups of Non-orientable Surfaces}
\author{Boris Khesin \and René Langøen \and Irina Markina}
\address{B.K.: Department of Mathematics, University of Toronto, Canada}
\address{R.L.: Department of Mathematics, University of Bergen, Norway.}
\address{I.M.: Department of Mathematics, University of Bergen, Norway.}
\begin{document}

\begin{abstract}
We study curvatures of the groups of measure-preserving diffeomorphisms of non-orientable compact surfaces.
For the cases of the Klein bottle and the real projective plane we compute curvatures, their asymptotics and the normalized Ricci 
curvatures in many directions. Extending the approach of V.~Arnold, and A.~Lukatskii we provide 
estimates of weather unpredictability for natural models of trade wind currents on the Klein bottle and the projective plane.
\end{abstract}

\maketitle

\tableofcontents

\section{Introduction} 
In the 1960's, V. Arnold pioneered a geometric approach for studying hydrodynamics~\cite{arnold_sur_1965, arnold_sur_1966}. Recall that the motion of an inviscid incompressible  fluid filling an $n$-dimensional Riemannian manifold  
$M$  is governed by the hydrodynamic Euler equation
$$
\partial_t v+\nabla_v v=-\nabla p
$$
on  the divergence-free velocity field $v$ of a fluid flow in $M$. 
Here $\nabla_v v$ stands for  the Riemannian 
covariant derivative of the field $v$ along itself, while the function $p$  is determined by the divergence-free 
condition ${\rm div}\,v=0$ up to an additive constant. 
Arnold proved that solutions to the Euler equation describe geodesic curves in the infinite-dimensional group $\SDiff(M)$
of volume-preserving diffeomorphisms of the  manifold $M$ with respect to the right-invariant $L^2$-metric given by the fluid's kinetic energy.
In particular, geometric properties of the corresponding infinite-dimensional Riemannian manifold $\SDiff(M)$, such as its curvatures, conjugate points, etc., affect the dynamical behaviour of geodesics on it, and hence the corresponding Euler solutions.

The corresponding groups of measure-preserving diffeomorphisms can be considered for non-orientable Riemannian 
manifolds $M$ as well.  It turns out that the corresponding Euler equations can be defined in the non-orientable context with minimal adjustments, see \cite{balabanova2022hamiltonian1, vanneste2021vortex, Izosimov2023}. 
More precisely, while a non-orientable manifold does not admit a non-vanishing volume form, one can still define a measure on it as a density (also called volume pseudo-form), see e.g.~\cite{lee_introduction_2012}.
We denote by  $\SDiff(M)$ the infinite-dimensional group of measure-preserving diffeomorphisms of $M$, isotopic to the identity. 
Many questions about those groups for non-orientable manifolds $M$ can be addressed by considering them for the corresponding orientable cover $\tilde M$.

In this paper we are focusing on non-orientable surfaces, and in particular on the Klein bottle and projective plane. Their double orientation covers are the torus and the sphere, respectively. For the torus, Arnold computed sectional curvatures in~\cite{arnold_sur_1966}. Those results were generalized to the $n$-torus in~\cite{lukatskii_curvature_1984}, where also an infinite-dimensional version of the normalized Ricci curvature was introduced and computed. The sectional curvatures of the diffeomorphism group of the 2D sphere in some selected directions were computed in~\cite{lukatskii_curvature_1979}. 

\smallskip

Observe that the subgroup of diffeomorphisms that commute with any isometry of a Riemannian manifold $\tilde M$
is  a totally geodesic subgroup of $\SDiff({\tilde M})$ for the right-invariant $L^2$ metric (Proposition \ref{prop:isometry}). This allows one to study curvatures of diffeomorphism groups of non-orientable manifolds via their
lifts to the orientable covers. The corresponding Lie algebras of divergence-free  vector fields on non-orientable $M$ 
are $I$-invariant subalgebras in the algebras of vector fields in $\tilde M$, where $I$ is the orientation-reversing isometry of $\tilde M$ without fixed points.

Our main objects of study are diffeomorphism groups of non-orientable surfaces. While exact
divergence-free  vector fields on surfaces are described by their stream (or Hamiltonian) functions, for non-orientable surfaces 
one has to confine to $I$-anti-invariant stream functions on their covers. 
We start with the case of the Klein bottle $\mb K$, whose orientation cover is the flat torus $\mb T$.

In Section \ref{sec:klein.bottle.case} we first describe a special basis of vector fields $\xi_k$, $k=(k_1,k_2)\in \mb N^2$ 
spanning the corresponding Lie algebra $\SVect(\mb K)$ of divergence-free fields on $\mb K$, see Theorem~\ref{th:basis.g-1}.
Then, similarly to ~\cite{arnold_sur_1966}, we focus on sectional curvatures in two-dimensional planes containing a specific 
vector field, mimicking a trade wind. The corresponding curvatures are described in Theorem \ref{th:sec.curv.klein}.
One of interesting conclusions of those explicit computations is the following
\smallskip

{\bf Corollary \ref{cor:neg.C.basis}}. {\it If two stream functions $\xi_k, \, \eta_l$  on the Klein bottle $\mb K$ are sufficiently different, in the sense that  $k_1\neq l_1$, $k_2\neq l_2$, then the sectional curvature $C(\xi_k, \eta_l)$  in the plane through them is strictly negative.}

On the other hand, there always exists a vector field $\eta\in \SVect(\mb K)$ such that the corresponding sectional curvature is positive, $C(\xi_k, \eta) >0$, see 
Theorem \ref{th:pos.sequence}.

Another result worth mentioning is an explicit value of the normalized Ricci curvature in the direction
of an arbitrary basis vector:
\smallskip

{\bf Theorem \ref{th:ric.klein}}. {\it For any basis vector $\xi_k\in \SVect(\mb K)$  
the normalized Ricci curvature of the group $\SDiff(\mb K)$ in the direction $\xi_k$ is negative and, explicitly,
$\Ric(\xi_k) =  -3(k_1^2 + k_2^2)/({16S_{\mb K}} ), $
where $S_{\mb K}$ is the area of the Klein bottle.} 

We also present results on asymptotics of sectional curvatures $C(\xi_k, \eta_m)$ as $m\to \infty$ and show that they are mostly negative, see Theorem \ref{th:sec.curv.large.l}. 
\medskip

In Section \ref{sec:real.projective.plane} we study curvatures of the group of measure-preserving 
diffeomorphisms of $\mb{RP}^2$. The basis of the corresponding stream functions is formed by  a ``half" of the spherical 
harmonics. We are particularly interested in the vector field $e^0_3 = {\rm const}\cdot (5\cos^2(\theta) -1)\partial_\phi $
in spherical coordinates, which is a natural candidate for a trade wind  current. 
Theorem~\ref{th:sec.curv.cpe} gives a detailed description of the corresponding sectional curvatures $C(e^0_3, e^m_l)$ in two-dimensional planes containing $e^0_3$, while the corresponding asymptotics as $l\to \infty$ for a fixed limit $m/l\to q$ 
are described in Theorem \ref{th:asymp.sec.curv.cpe}.

This allows one to describe the corresponding Ricci curvature in this direction:
\smallskip

{\bf Corollary \ref{cor:ricci.rp2}}. {\it The normalized Ricci curvature of  $\SDiff(\mb{RP}^2)$ in the direction $e^0_3$ is $\Ric(e^0_3) = -15/(8\pi) = -15/(4 S_{\mb{RP}^2 }).$}

We also present a comparison with the previously known results for the vector field $e^2_0 ={\rm const}\cdot  \cos\theta \,\p_\phi$, cf.~\cite{lukatskii_curvature_1979}.
\medskip

Finally, in Section \ref{sect:weather} we give estimates for weather unpredictability based on the curvature computations
if the earth were of the shape of the Klein bottle or the projective plane, and compare them with those for the earth shaped as a sphere or a torus, cf.~\cite{arnold_mathematical_1989}. Here it is  important to agree on which vector field could be a most natural analogue of the trade wind, a strong west-east current in the earth atmosphere, responsible, in particular, for the shorter time of flying in the east direction vs. the west one in either hemisphere. The other important point is how to estimate the corresponding ``average sectional curvature'', and we assume it to be the normalized Ricci curvature  in that direction. 

Here we encounter a new phenomenon for non-orientable manifolds: a typical error increase may depend on whether we
consider the wind's fastest particles along closed orbits where the manifold's orientation changes or does not, and on how we rescale the manifold. 
Under certain assumptions we show that to predict the weather on the Klein bottle (or on the torus) for 2 months for a natural trade wind, one needs to know it today with $4$ more digits of accuracy. On the other hand, on the projective plane for the same period  of 2 months one needs to know it today with $8$ more digits of accuracy! Computations in~\cite{arnold_mathematical_1989} and~\cite{lukatskii_curvature_1979} based on different trade wind candidates and ``average sectional curvature'' estimates would give, respectively $5$ more digits of accuracy for the torus and $10$ more digits for the sphere.
It seems that the weather is far more reliable on the Klein bottle!

\medskip

The study of non-orientable surfaces has applications in theoretical physics for toy-models of space-time universe with a non-orientable space surface. For instance in~\cite{witten_parity_2016}  Witten discusses a ``parity'' anomaly appearing under reflection symmetry in (2+1) dimensional topological gravity. Another paper~\cite{chen_quantizing_2014} quantizes BF-theory on non-orientable surfaces and, in particular, considers the Klein bottle in greater detail. In general one can speculate whether the space manifold of the space-time universe is orientable or not. Orientation is a global topological property of a manifold, and travelling around the universe along an orientation reversing path is an unfeasible experiment. Instead one might explore the implications of a non-orientable space manifold to more local structures that can be verified experimentally. 
Note that fluid dynamics might be a model for such experiments on non-orientable surfaces, as for instance soap films can have a shape of a M\"obius band. 

\medskip

{\bf Acknowledgements:}  B.K. was partially supported by an NSERC research grant. I.M and R.L are grateful for the support granted by the project Pure Mathematics in Norway, funded by the Trond Mohn Foundation and Tromsø Research Foundation. All authors are also grateful to the Mittag-Leffler Institute in Stockholm, where this work was conceived.



\section{Measure-preserving diffeomorphism groups}\label{sec:measure.preserving}
\subsection{Definitions and preliminaries}

Let $M$ be a compact, non-orientable manifold without boundary and $\tilde{M}$ an orientation double cover of $M$. Then  there exists a fixed-point-free, orientation-reversing involution $I:\tilde{M} \to \tilde{M}$ such that $ M \simeq \tilde{M}/ I$. The quotient induces a covering map $\mf K:\tilde{M}\to M$. For such a double cover the deck transformation group consists of the two elements $\{ \Id_{\tilde{M}}, I \}$.

\begin{definition}\label{def:density}
{\rm
A {\it density} $\rho$ on an $n$-dimensional manifold $M$ is a section of the bundle $\bigwedge^n T^*M \otimes o(M)$, where $o(M)$ is the orientation bundle over $M$. 
}\end{definition}
A density is also referred to as a non-vanishing volume pseudo-form, or a twisted volume-form, on $M$.
Under a change of coordinates $(x_1, \dots, x_n) \mapsto  (y_1, \dots, y_n)$ with Jacobi matrix
$J = \Big(\frac{\p x_i}{\p y_j}\Big)$ one obtains,
\[\rho \left(\frac{\p}{\p{x_1},} \dots, \frac{\p}{\p{x_n}} \right) =  |\det(J)|\rho \left(\frac{\p}{\p{y_1},} \dots, \frac{\p}{\p{y_n}} \right).  \]
Given a density $\rho $ on $M$ one can define a measure $\mu$ by
\[\mu(A) = \int_{A} \rho\quad \text{for Borel subsets}\quad A\subset M. \]

Let $\tilde{M}$ be an orientation double cover of $M$ with covering map $\mf K$ and
$$\rho:M\to {\bigwedge}^2 T^*M \otimes o(M),$$
a density on $M$. The density $\rho$ can be pulled back to $\tilde{\rho} = \mf K^*\rho  $ on $\tilde{M}$, 
while $\tilde{\rho}$ can be identified with a volume form on $\tilde{M}$.

\begin{example}\label{def:klein.bottle}
{\rm
Consider $\mathbb{R}^2$ with the lattice $\Gamma$ spanned by $( 2\pi,0)$ and $ (0, 2\pi)$
\[\Gamma = \{ (2\pi a, 2\pi b)  \in \mathbb{R}^2 \mid a,b \in \mathbb{Z} \} .\]
The torus $\mb T$ is defined by the quotient $\mathbb{R}^2/\Gamma$, see Figure~\ref{fig:lattice}.
Define the fixed-point-free, orientation-reversing involution by $I:\mathbb T \to \mathbb T$ by
\[I(x_1,x_2) = (2\pi-x_1, \pi + x_2)\] and the equivalence relation $(x_1,x_2) \sim_\mathbb{K} I(x_1, x_2)$ on $\mathbb T$. 
The Klein bottle $\mathbb{K}$ is the quotient $\mathfrak{K}:\mathbb T\to \mathbb K=\mathbb{T}/\sim_\mathbb{K}$ endowed with the smooth structure, see Figure~\ref{fig:lattice}. 
On the torus $\mb T$ we define the area form $\tilde{\rho} = dx_1 \wedge dx_2$ by restricting the standard area form on $\mb R^2$. The density $\rho$ on $\mb K$ is such that its pull-back to $\mb T$ by $\mf K$ coincides with $\tilde{\rho}$.  The covering map $\mf K$ is also required to be a Riemannian cover, where on $\mb T$ we have the standard Euclidean metric $\tilde{g} = dx_1^2 + dx_2^2$.

\begin{figure}
\centering
\includegraphics[width=0.4\textwidth]{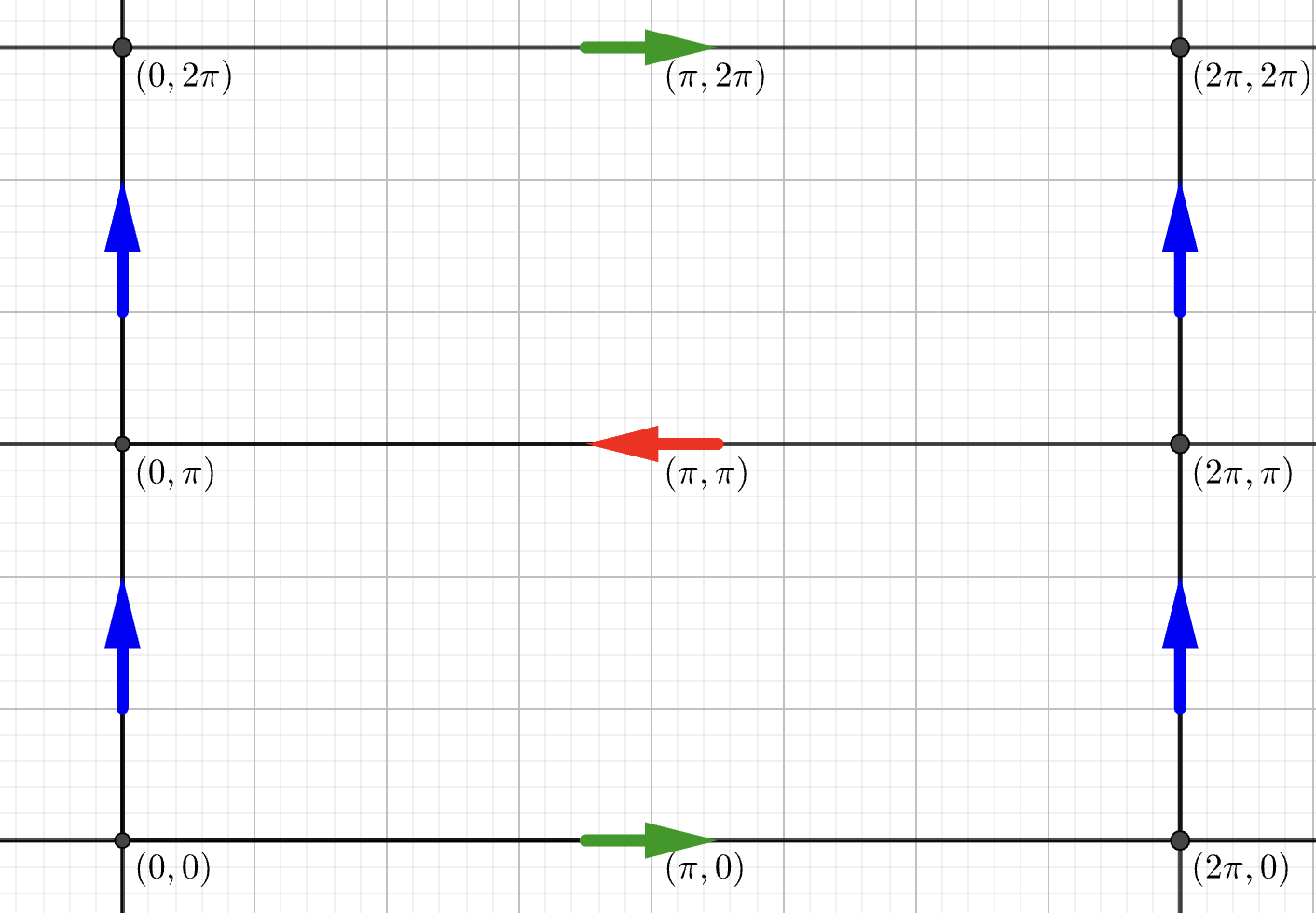}
\caption{ \tiny Lattice $\Gamma$ in $\mathbb{R}^2$. The green and blue arrows are the identifications defining the torus $\mb T$. Identifying the red arrow with the green arrows gives the Klein bottle $\mb K$, whose double cover is the torus $\mb T$.}
\label{fig:lattice}
\end{figure}
}
\end{example}
\begin{example}\label{ex:Usphere}
{\rm
The unit sphere $\mb S^2=\{(x,y,z)\in\mathbb R^3\vert\ x^2+y^2+z^2=1\}$ with the spherical coordinates \begin{equation}\label{eq:sphere}
\begin{array}{llll}
x=\sin\theta\cos\phi,
\\
y=\sin\theta\sin\phi,
\\
z=\cos\theta,
\end{array}
\qquad\theta\in[0,\pi],\ \phi\in[0,2\pi), 
\end{equation}
is another key example.
The antipodal map $I: \mb S^2 \to\mb  S^2$ given by
\begin{equation}\label{eq:InvSphere} 
I(x,y,z) = (-x, -y, -z)\quad \text{or} \quad
I(\theta, \phi) = ( \pi-\theta, \pi+\phi  )
\end{equation}
is a fixed-point-free, orientation-reversing involution. Define the 
equivalence relation $(x,y,z) \sim I(x,y,z)$ on $\mb S^2$.
The real projective plane $\mathbb{RP}^2$ is the quotient $\mathfrak{K}: \mb S^2 \to \mathbb{RP}^2 = \mb S^2/\sim$. 
Points on $\mb{RP}^2$ are given spherical coordinates $(\theta, \phi)$, through the quotient map $\mf K$. 
The area form $\tilde{\mu} = \frac{1}{|z|} dx\wedge dy $  on the sphere $\mb S^2$ (or in spherical coordinates $\tilde{\mu} = \sin(\theta) d\theta \wedge d\phi$) induces a density $\mu$ on $\mb{RP}^2$ by the covering map $\mf K$.  The covering map $\mf K$ is also required to be a Riemannian cover, where on $\mb S^2$ we have the metric $\tilde{g} = d\theta^2 + \sin^2\theta d\phi^2$ induced from the ambient Euclidean metric in $\mb R^3$. 
}
\end{example}

\subsection{Measure-preserving groups and their Lie algebras}

\begin{definition}\label{def:diff.group.non.orientable}
{\rm
Let $N$ be a compact (oriented or not) manifold without boundary and $\rho$ a density on $N$. The {\it group of measure-preserving diffeomorphisms of} $(N, \rho)$ is the Lie-Fr\'echet  group
\[ \SDiff(N) = \left\{ \Phi\colon N\to N \mid \Phi \text{ is a diffeomorphisms and } \Phi^*\rho = \rho \right\} . \]
}
\end{definition}

\begin{lemma}\label{lem:lie.group.iso}
Let $\SDiff_I(\tilde{M})$ denote the subset of $\SDiff(\tilde{M})$ consisting of diffeomorphisms that commute with the involution map $I$.
Then $\SDiff_I(\tilde{M})$ is a subgroup of $\SDiff(\tilde{M})$ and there exists an isomorphism  
\[\begin{matrix}
F: & \SDiff_I(\tilde{M}) & \to & \SDiff(M),
\end{matrix}\]
that identifies measure-preserving diffeomorphisms of a non-orientable manifold $M$ with the measure-preserving and involution-commuting diffeomorphisms of its orientation double cover~$\tilde{M}$.
\end{lemma}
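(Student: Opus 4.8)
The plan is to realize $F$ as the map that descends an $I$-commuting diffeomorphism of $\tilde M$ to the quotient $M=\tilde M/I$, and to produce its inverse by lifting diffeomorphisms of $M$ through the covering $\mf K$; measure-preservation will be transferred between the two sides using the single identity $\mf K^*\rho=\tilde\rho$ together with the injectivity of $\mf K^*$. The subgroup assertion is purely algebraic: $\Id_{\tilde M}$ commutes with $I$; if $\tilde\Phi_1,\tilde\Phi_2$ commute with $I$ then $\tilde\Phi_1\circ\tilde\Phi_2\circ I=\tilde\Phi_1\circ I\circ\tilde\Phi_2=I\circ\tilde\Phi_1\circ\tilde\Phi_2$; and conjugating $\tilde\Phi\circ I=I\circ\tilde\Phi$ shows $\tilde\Phi^{-1}\circ I=I\circ\tilde\Phi^{-1}$. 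Intersecting these closure conditions with the group $\SDiff(\tilde M)$ gives that $\SDiff_I(\tilde M)$ is a subgroup.

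To define $F$, I would argue that an $I$-commuting $\tilde\Phi$ permutes the two-point fibers $\{\tilde p,I(\tilde p)\}$ of $\mf K$, so there is a unique set map $\Phi\colon M\to M$ with $\mf K\circ\tilde\Phi=\Phi\circ\mf K$. Since $\mf K$ is a local diffeomorphism, writing $\Phi=\mf K\circ\tilde\Phi\circ s$ for a local section $s$ of $\mf K$ shows $\Phi$ is smooth, and the same applied to $\tilde\Phi^{-1}$ shows $\Phi$ is a diffeomorphism. For measure-preservation I would compute $\mf K^*(\Phi^*\rho)=(\Phi\circ\mf K)^*\rho=(\mf K\circ\tilde\Phi)^*\rho=\tilde\Phi^*(\mf K^*\rho)=\tilde\Phi^*\tilde\rho=\tilde\rho=\mf K^*\rho$; because $\mf K$ is a surjective local diffeomorphism, $\mf K^*$ is injective on densities, forcing $\Phi^*\rho=\rho$, so $\Phi\in\SDiff(M)$. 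The defining identity also yields $\mf K\circ(\tilde\Phi_1\circ\tilde\Phi_2)=(\Phi_1\circ\Phi_2)\circ\mf K$, so $F$ is a homomorphism.

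For the inverse I would lift an isotopy. Given $\Phi\in\SDiff(M)$ with an isotopy $\Phi_t$ from $\Id_M$ to $\Phi$, the homotopy lifting property of the covering $\mf K$ produces a unique $\tilde H\colon[0,1]\times\tilde M\to\tilde M$ with $\mf K\circ\tilde H(t,\cdot)=\Phi_t\circ\mf K$ and $\tilde H(0,\cdot)=\Id_{\tilde M}$; set $\tilde\Phi:=\tilde H(1,\cdot)$, which is a diffeomorphism as a lift of the covering map $\Phi\circ\mf K$. Then $\tilde\Phi$ commutes with $I$, because $t\mapsto\tilde H(t,\cdot)\circ I$ and $t\mapsto I\circ\tilde H(t,\cdot)$ are both lifts of $t\mapsto\Phi_t\circ\mf K$ (using $\mf K\circ I=\mf K$) that agree at $t=0$, hence coincide. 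Reversing the pullback computation gives $\tilde\Phi^*\tilde\rho=\tilde\Phi^*\mf K^*\rho=(\Phi\circ\mf K)^*\rho=\mf K^*\rho=\tilde\rho$, so $\tilde\Phi\in\SDiff_I(\tilde M)$ and $F(\tilde\Phi)=\Phi$. This produces a candidate inverse $\Phi\mapsto\tilde\Phi$, whose well-definedness is the delicate point addressed next.

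The step I expect to be the main obstacle is exactly this well-definedness, i.e.\ that the lift is unambiguous. A priori $\Phi\circ\mf K$ has precisely two lifts, $\tilde\Phi$ and $I\circ\tilde\Phi$, and both commute with $I$ and descend to $\Phi$, so injectivity of $F$ forces us to single one out. This is where the identity-component (isotopic-to-identity) hypothesis on $\SDiff$ is essential: since $\tilde\Phi$ is isotopic to $\Id_{\tilde M}$ it is orientation-preserving, whereas $I$ is orientation-reversing, so $I\circ\tilde\Phi$ is orientation-reversing and cannot lie in the identity component; thus exactly one lift belongs to $\SDiff_I(\tilde M)$, and lifting the isotopy selects it canonically and independently of the chosen isotopy. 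Combined with the homomorphism property, the mutually inverse descent and lift maps then show $F$ is a group isomorphism. The only remaining care is the routine bookkeeping that the density $\rho$ on the non-orientable $M$ pulls back to a genuine volume form $\tilde\rho$ on the orientable cover, as fixed in the setup.
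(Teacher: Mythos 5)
Your argument is correct and follows the same route the paper takes: the paper's proof consists of the single remark that the statement ``is immediate by the lifting properties through covering maps,'' and your descent-and-lift construction of $F$, with the two-lift ambiguity resolved by the isotopic-to-identity (orientation) condition, is precisely the detailed version of that. The only added value on your side is making explicit the well-definedness of the inverse and the transfer of measure-preservation via the injectivity of $\mf K^*$, both of which the paper leaves implicit.
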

\begin{proof}
The proof is immediate by the lifting properties through covering maps. See~\cite{omori_infinite_1974} for the topological properties of subgroups. 
\end{proof}

Consider the Lie-Fr\'echet group $\SDiff(\tilde{M})$ of measure-preserving diffeomorphism of $\tilde{M}$ and its Lie-Fr\'echet algebra $\SVect(\tilde{M})$ consisting of vector fields on $\tilde M$ that are divergence-free with respect to the volume form $\tilde{\rho}$. Thus
\[\SVect(\tilde{M}) = \left\{ v:\tilde{M} \to T\tilde{M} \mid L_v\tilde{\rho} = 0  \iff \diverg(v) = 0 \right\}.\]
Lemma~\ref{lem:lie.group.iso} shows that for $\tilde{\Phi}\in \SDiff(\tilde{M})$ to induce a measure-preserving diffeomorphism of $M$, it needs to satisfy
\begin{equation}\label{eq:AdI} 
\tilde{\Phi} = I \circ \tilde{\Phi} \circ I^{-1}. 
\end{equation}
For $v\in \SVect(\tilde{M})$ the condition~\eqref{eq:AdI} translates to
$ \Ad_I(v) = v$,  
which reads as $I_*v = v\circ I$.

The Lie-Fr\'echet algebra $\SVect_I(\tilde{M})$ of $\SDiff_I(\tilde{M})$ is the subalgebra of $\SVect (\tilde{M})$ defined by
\[\SVect_I(\tilde{M}) = \{v\in \SVect(\tilde{M}) \mid L_v \tilde{\rho} = 0,\quad I_*v = v\circ I\}. \]

\begin{corollary}\label{cor:iso_lie_alg}
There exists an isomorphism of Lie-Fr\'echet algebras
\[F_*: \SVect_I(\tilde{M}) \to \SVect(M),\]
induced by the isomorphism $F$ in Lemma~\ref{lem:lie.group.iso}.
\end{corollary}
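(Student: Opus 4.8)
The plan is to realize $F_*$ concretely as the push-forward along the covering map $\mathfrak{K}$, and then to verify the four defining properties of a Lie-algebra isomorphism: that it is well defined, a linear bijection, carries $\tilde\rho$-divergence-free fields to $\rho$-divergence-free fields, and intertwines the brackets. Abstractly, since Lemma~\ref{lem:lie.group.iso} already provides the group isomorphism $F$, one expects $F_*$ to be nothing but its tangent map at the identity, so that the isomorphism property is inherited from $F$; I will nonetheless carry out the construction directly on vector fields, which simultaneously identifies this tangent map and sidesteps the subtleties of differentiating maps of Lie-Fr\'echet groups.

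First I would define the candidate map: for $v\in\SVect_I(\tilde M)$ set
$$(F_* v)(p) := d\mathfrak{K}_{\tilde p}\bigl(v(\tilde p)\bigr),\qquad p\in M,$$
where $\tilde p$ is any preimage of $p$ under $\mathfrak{K}$. The crux of the whole argument is that this is independent of the choice of preimage. The two preimages of $p$ are $\tilde p$ and $I(\tilde p)$; since $I$ is a deck transformation we have $\mathfrak{K}\circ I=\mathfrak{K}$, hence $d\mathfrak{K}_{I(\tilde p)}\circ dI_{\tilde p}=d\mathfrak{K}_{\tilde p}$, and combining this with the invariance condition $I_* v=v\circ I$ (that is, $dI_{\tilde p}(v(\tilde p))=v(I(\tilde p))$) gives
$$d\mathfrak{K}_{I(\tilde p)}\bigl(v(I(\tilde p))\bigr)=d\mathfrak{K}_{I(\tilde p)}\bigl(dI_{\tilde p}(v(\tilde p))\bigr)=d\mathfrak{K}_{\tilde p}\bigl(v(\tilde p)\bigr).$$
Thus the value is well defined, and as $\mathfrak{K}$ is a local diffeomorphism $F_* v$ is a smooth field on $M$; linearity is immediate.

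Next I would establish bijectivity and that the target lies in $\SVect(M)$. Injectivity is clear because each $d\mathfrak{K}_{\tilde p}$ is an isomorphism, so $F_* v=0$ forces $v=0$. For surjectivity I construct the inverse as the unique lift: given $w\in\SVect(M)$ put $v(\tilde p):=(d\mathfrak{K}_{\tilde p})^{-1}\bigl(w(\mathfrak{K}(\tilde p))\bigr)$; this is smooth, and the identity $dI_{\tilde p}=(d\mathfrak{K}_{I(\tilde p)})^{-1}\circ d\mathfrak{K}_{\tilde p}$ shows it satisfies $I_* v=v\circ I$, so $v\in\SVect_I(\tilde M)$ and $F_* v=w$. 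The divergence-free condition is preserved in both directions because $\tilde\rho=\mathfrak{K}^*\rho$ and $v$ is $\mathfrak{K}$-related to $w=F_* v$, whence $\mathfrak{K}^*(L_w\rho)=L_v\tilde\rho$; since $\mathfrak{K}$ is a surjective local diffeomorphism, $L_v\tilde\rho=0$ is equivalent to $L_w\rho=0$.

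Finally, to see that $F_*$ is a Lie-algebra homomorphism I would invoke the standard fact that $\mathfrak{K}$-relatedness is preserved under the bracket: if $v_i$ is $\mathfrak{K}$-related to $w_i$ for $i=1,2$, then $[v_1,v_2]$ is $\mathfrak{K}$-related to $[w_1,w_2]$, so $F_*[v_1,v_2]=[F_* v_1,F_* v_2]$. The remaining point, that the construction respects the Lie-Fr\'echet topologies so that $F_*$ is a genuine isomorphism of Lie-Fr\'echet algebras, follows from the smoothness of $\mathfrak{K}$ and of its local inverses, together with the identification of $F_*$ with the tangent map of the group isomorphism $F$ of Lemma~\ref{lem:lie.group.iso} (the flow of $v$ descends to the flow of $F_* v$). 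It is this analytic identification, rather than any single computation, that I expect to require the most care.
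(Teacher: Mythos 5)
The paper offers no proof of this corollary at all — it is presented as an immediate consequence of Lemma~\ref{lem:lie.group.iso}, whose own proof is the one-line remark that everything follows from lifting properties through covering maps. Your construction of $F_*$ as the push-forward along $\mathfrak{K}$, with well-definedness from $\mathfrak{K}\circ I=\mathfrak{K}$ and $I_*v=v\circ I$, inverse given by the unique lift, and preservation of the divergence-free condition and the bracket via $\mathfrak{K}$-relatedness, is exactly the standard descent argument the paper leaves implicit, and it is correct.
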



\section{Curvatures of SDiff$(M)$, isometries, and stream functions}\label{sec:riemannian.structure}
\subsection{Riemannian structure and curvatures}\label{sec:curvature.def}
Let $M$ be a compact finite-dimensional (oriented or not) manifold with a density $\rho$ and a Riemannian metric $g$. 
A right-invariant Riemannian metric on $\SDiff(M)$ is defined at $\Phi \in \SDiff(M)$ by
\[
\langle v, w \rangle _\Phi =  R^*_{\Phi^{-1}} \langle v, w\rangle_{\Id}=\int_{M} g(v\circ \Phi^{-1},w\circ \Phi^{-1})\, \,{\rho}, \quad  v,w \in T_\Phi\SDiff(M).  \]
where $R^*_{\Phi^{-1}}$ is the pull-back by the right translation $R_{\Phi^{-1}}$.

For the right-invariant Riemannian metric $\langle.\,,.\rangle $ on $\SDiff(M)$ 
the Levi-Cevita connection $\nabla$ on $\SDiff(M)$ satisfies the Koszul formula
\[ 2\langle \nabla_u v ,  w\rangle  = u \langle v, w\rangle + v \langle w , u\rangle  - w\langle u, v\rangle  +\langle[u,v] ,w \rangle - \langle [v, w], u \rangle    + \langle [w, u] , v\rangle,  \]
where $[.\,,.]$ is the corresponding Lie bracket. Since  $\langle.\,,. \rangle$ induces an isomorphism of $\SVect(M)$ and its smooth dual $\SVect(M)^*$, the Koszul formula can be simplified to
\begin{equation}\label{eq:Koszul.lie}
\nabla_u v = \frac{1}{2} \left( [u,v] - B(u,v) - B(v,u) \right), 
\end{equation}
where $u$ and $v$ are right invariant vector fields on $\SDiff(M)$ and $B$ is the bilinear operator implicitly defined by
\[ \langle B(u,v) , w \rangle = \langle u  , [v,w] \rangle  = \langle \ad_v^T(u),  w\rangle ,\]
see~\cite{arnold_mathematical_1989}(p. 333).

In the cases when $B$ is well defined, there exists a Levi-Cevita connection on $\SDiff(M)$, i.e the unique connection satisfying
$$
\text{i)}\quad u\langle v ,w \rangle  = \langle \nabla_u v , w\rangle + \langle v , \nabla_u w\rangle \quad
\text{ii)} \quad \nabla_u v - \nabla_v u - [u,v] = 0.
$$
The \textit{Riemannian curvature endomorphism} $R$ and the \textit{sectional curvature} $C$ of $\SDiff(M)$ is defined similarly to a finite dimensional Riemannian manifold:
\begin{equation}\label{eq:Rie.curv.end.def}
R(u,v)w = -\nabla_u\nabla_v w + \nabla_v \nabla_u w + \nabla_{[u,v]} w \quad \text{for } u,v,w \in T_\Phi \SDiff(M)\,,
\end{equation}
\begin{equation}\label{eq:sec.curv.def}
C(u,v) = \frac{\langle  R(u,v)u, v \rangle }{\langle u,u \rangle \langle v, v\rangle - \langle u, v \rangle^2   } \quad \text{for } u,v \in T_\Phi \SDiff(M).
\end{equation}

Another formula for $C(u,v)$ in the direction determined by an orthonormal pair of vectors $u,v \in \SVect(M)$ is provided by Arnold in~\cite{arnold_mathematical_1989}: 
\begin{equation}\label{eq:C.short.formula}
C(u,v)=\langle \delta(u,v),\delta(u,v)\rangle+2\langle\alpha(u,v),\beta(u,v) \rangle-3\langle\alpha(u,v),\alpha(u,v) \rangle,
\end{equation}
where
\begin{equation}\label{eq:ABD}
\begin{array}{lll}
\delta(u,v) & = & \frac{1}{2}\Big(B(u,v)+B(v,u)\Big), \\
\beta(u,v)  & = & \frac{1}{2}\Big(B(u,v)-B(v,u)\Big), \\
\alpha(u,v) & = & \frac{1}{2}[u,v].
\end{array}
\end{equation}

\subsection{Isometries and the totally geodesic property}\label{sec:totally}
Geodesics on $ \SDiff(M)$ with respect to the right-invariant $L^2$-metric are of particular interest, as they correspond to solutions of the Euler equation on $M$, as was shown by Arnold~\cite{arnold_sur_1966}.

\begin{definition}
{\rm A submanifold $N$ of a Riemannian manifold $(M, g)$ is called {\it totally geodesic} if any geodesic on the submanifold $N$ with its induced Riemannian metric $g$ is also a geodesic on the Riemannian manifold $(M,g)$.

Sectional curvatures of totally geodesic submanifolds $N$ coincide with the curvatures in $M$ in the corresponding two-dimensional directions tangent to $N$. 
}
\end{definition}

Now consider an isometry $\tau$ of a Riemannian manifold $(M,g)$. The following general proposition 
will be particularly useful in the non-orientable context later.

\begin{proposition}\label{prop:isometry}
Let $\SDiff_\tau(M)$ denote the subset of $\SDiff({M})$ consisting of diffeomorphisms that commute with the isometry $\tau$.
Consider the right-invariant $L^2$ metric on $\SDiff({M})$.
Then $\SDiff_\tau({M})$ is a totally geodesic subgroup of $\SDiff({M})$. 
\end{proposition}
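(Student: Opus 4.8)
The plan is to realize $\SDiff_\tau(M)$ as the fixed-point set of an isometry of the ambient group and then invoke the classical principle that fixed-point sets of isometries are totally geodesic. Concretely, I would let $\tau$ act on $\SDiff(M)$ by conjugation,
\[
\Psi_\tau \colon \SDiff(M)\to\SDiff(M),\qquad \Psi_\tau(\Phi)=\tau\circ\Phi\circ\tau^{-1}.
\]
Since $\tau$ is an isometry it preserves $g$ together with the associated Riemannian volume density, which we take to be $\rho$ (as in Examples~\ref{def:klein.bottle} and~\ref{ex:Usphere}); from $\tau^*\rho=\rho$ one checks $(\tau\Phi\tau^{-1})^*\rho=\rho$, so $\Psi_\tau$ is a well-defined group automorphism of $\SDiff(M)$, even though $\tau$ itself need not lie in $\SDiff(M)$. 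Its fixed points are exactly the $\Phi$ with $\tau\Phi\tau^{-1}=\Phi$, i.e.\ the diffeomorphisms commuting with $\tau$, so $\mathrm{Fix}(\Psi_\tau)=\SDiff_\tau(M)$.

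The first substantive step is to compute the differential of $\Psi_\tau$ in the right-trivialization $T_\Phi\SDiff(M)\cong\SVect(M)$, $X\mapsto v:=X\circ\Phi^{-1}$. Differentiating a curve through $\Phi$ gives $(d\Psi_\tau)X=T\tau\circ X\circ\tau^{-1}$, and right-translating the result at $\Psi_\tau(\Phi)=\tau\Phi\tau^{-1}$ back to the identity yields $T\tau\circ v\circ\tau^{-1}=\Ad_\tau(v)=\tau_*v$. Thus, under right-trivialization, $d\Psi_\tau$ acts at every base point simply as $\Ad_\tau$ on $\SVect(M)$, independently of $\Phi$. The key computation is then that $\Ad_\tau$ is orthogonal for $\langle\cdot,\cdot\rangle_{\Id}$. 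Using the change of variables $m=\tau(n)$ together with $\tau^*\rho=\rho$ and the fact that $T\tau$ is a pointwise linear $g$-isometry,
\[
\langle\tau_*v,\tau_*w\rangle_{\Id}=\int_M g(\tau_*v,\tau_*w)\,\rho=\int_M g(v,w)\,\rho=\langle v,w\rangle_{\Id}.
\]
Because the metric is right-invariant and $d\Psi_\tau$ reduces to the $\langle\cdot,\cdot\rangle_{\Id}$-orthogonal map $\Ad_\tau$ in the trivialization, $\Psi_\tau$ preserves $\langle\cdot,\cdot\rangle$, i.e.\ it is an isometry of $\SDiff(M)$.

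Finally I would conclude by the standard fixed-set argument: if $\gamma$ is a geodesic of $\SDiff(M)$ with $\gamma(0)\in\SDiff_\tau(M)$ and $\dot\gamma(0)$ tangent to $\SDiff_\tau(M)=\mathrm{Fix}(\Psi_\tau)$ (hence fixed by $d\Psi_\tau$), then $\Psi_\tau\circ\gamma$ is a geodesic with the same initial data, so $\Psi_\tau\circ\gamma=\gamma$ and $\gamma$ stays in $\SDiff_\tau(M)$, which is exactly the totally geodesic property.

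The main obstacle is that this last step uses uniqueness of geodesics, which is not automatic in the Fréchet category. I expect the cleanest rigorous route is to run the argument entirely at the Lie-algebra level and avoid invoking geodesic existence. The isometry property just proved says $\Ad_\tau$ preserves $\langle\cdot,\cdot\rangle_{\Id}$ and, being a Lie-algebra automorphism, also preserves the bracket; hence the operator $B$ of \eqref{eq:Koszul.lie}, defined by $\langle B(u,v),w\rangle=\langle u,[v,w]\rangle$, is $\Ad_\tau$-equivariant, and so is the connection $\nabla$ of~\eqref{eq:Koszul.lie}. Since $\SVect_\tau(M)=\{v:\tau_*v=v\}$ is a subalgebra, it follows that $\nabla_uv\in\SVect_\tau(M)$ whenever $u,v\in\SVect_\tau(M)$; thus the second fundamental form of $\SVect_\tau(M)$ vanishes, which is precisely the totally geodesic property and, by the definition recalled just above the statement, forces the sectional curvatures of $\SDiff_\tau(M)$ to agree with those of $\SDiff(M)$ in the corresponding directions.
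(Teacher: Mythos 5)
Your proposal is correct, and its operative argument — $\Ad_\tau$ preserves both the $L^2$ inner product and the Lie bracket, hence the Koszul formula \eqref{eq:Koszul.lie} is $\Ad_\tau$-equivariant and $\nabla$ preserves the fixed subalgebra $\SVect_\tau(M)$ — is essentially the paper's own proof, which likewise observes that the covariant derivative is built only from $\tau$-invariant data. The initial fixed-point-set-of-an-isometry framing is a nice complementary viewpoint, and you rightly flag the geodesic-uniqueness issue in the Fréchet setting before falling back on the same Lie-algebra-level argument the authors use.
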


\begin{proof}
Indeed, the compositions and inverses of diffeomorphisms  commuting with $\tau$ also commute with it, so 
$\SDiff_\tau({M})$ is a subgroup.  The Koszul formula \eqref{eq:Koszul.lie} for the covariant derivative uses 
only the metric and Lie commutator, both of which are $\tau$-invariant, so the parallel transport is $\tau$-invariant as well. 
The latter means that all geodesics started on the submanifold $\SDiff_\tau({M})\subset \SDiff({M})$ remain on this submanifold, which is the  totally geodesic property.
\end{proof}

A similar statement for axisymmetric flows was proved in~\cite{lichtenfelz_axisymmetric_2022}.

\begin{corollary} For a non-orientable manifold $M$ the group of measure-preserving diffeomorphisms $\SDiff(M)$, understood as a subgroup of $I$-invariant diffeomorphisms in its orientation double cover $\SDiff_I(\tilde{M}) \subset \SDiff(\tilde{M}) $,
is a totally geodesic submanifold in $\SDiff(\tilde{M}) $.
\end{corollary}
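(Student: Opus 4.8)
The plan is to recognize this as the specialization of Proposition \ref{prop:isometry} to the ambient group $\SDiff(\tilde{M})$ with the isometry $\tau = I$, combined with the identification $\SDiff(M) \cong \SDiff_I(\tilde{M})$ supplied by Lemma \ref{lem:lie.group.iso}. The single hypothesis that must be checked before invoking the proposition is that the fixed-point-free, orientation-reversing involution $I$ is nonetheless an \emph{isometry} of $(\tilde{M}, \tilde{g})$, where $\tilde{g} = \mf K^* g$ is the lifted metric making $\mf K \colon \tilde{M} \to M$ a Riemannian cover.

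First I would verify that $I$ is an isometry. Since $I$ is a deck transformation it satisfies $\mf K \circ I = \mf K$, and therefore
\[
I^* \tilde{g} = I^* \mf K^* g = (\mf K \circ I)^* g = \mf K^* g = \tilde{g}.
\]
Thus $I$ preserves $\tilde{g}$; its orientation-reversing property is irrelevant, as the isometry condition is insensitive to orientation. In particular $I$ also preserves the associated Riemannian measure, that is, the density against which the right-invariant $L^2$-metric on $\SDiff(\tilde{M})$ is defined, even though $I^* \tilde{\rho} = -\tilde{\rho}$ at the level of the volume \emph{form}. Consequently the $L^2$ inner product $\langle u, v\rangle = \int_{\tilde{M}} \tilde{g}(u,v)\,\tilde{\rho}$ is invariant under the conjugation action $v \mapsto \Ad_I v$, exactly because $I$ preserves both $\tilde{g}$ and the measure.

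With this in hand I would apply Proposition \ref{prop:isometry} verbatim, taking its manifold to be $\tilde{M}$ and its isometry to be $\tau = I$. The proposition then asserts that $\SDiff_I(\tilde{M})$, the subgroup of measure-preserving diffeomorphisms of $\tilde{M}$ commuting with $I$, is a totally geodesic subgroup of $\SDiff(\tilde{M})$ for the right-invariant $L^2$-metric. Finally, Lemma \ref{lem:lie.group.iso} provides the isomorphism $F \colon \SDiff_I(\tilde{M}) \to \SDiff(M)$, under which this totally geodesic subgroup is precisely the copy of $\SDiff(M)$ sitting inside $\SDiff(\tilde{M})$, which is the assertion of the corollary.

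I do not expect a genuine obstacle here: the substantive content is entirely contained in the already-proved Proposition \ref{prop:isometry}, whose argument uses only the $I$-invariance of the metric and of the Lie bracket to conclude $I$-invariance of the Koszul connection, hence of parallel transport and of geodesics. The only point that demands a moment of care is the observation above that an orientation-reversing map can still be an isometry, so that the proposition applies despite $I$ not preserving the volume form on the nose; since both the $L^2$ inner product and the commutator depend on $\tilde{g}$ and the smooth structure but not on a choice of orientation, the argument goes through unchanged.
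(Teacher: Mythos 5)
Your proposal is correct and follows exactly the paper's route: the paper's entire justification is the one-line remark that $I$ is an isometry, after which Proposition \ref{prop:isometry} with $\tau = I$ and the identification from Lemma \ref{lem:lie.group.iso} give the claim. You simply make explicit the verification $I^*\tilde{g} = (\mf K\circ I)^*g = \tilde{g}$ and the observation that orientation-reversal is harmless, both of which the paper leaves implicit.
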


Indeed, the involution $I$ is an isometry of $M$.

This allows one to employ the curvature formulas for the diffeomorphism groups of the corresponding 
orientation double covers, provided that the basis vector fields are chosen in invariant or anti-invariant way.
Furthermore, below we assume  that for a non-orientable $M$ and its orientation cover $\tilde M$  the Lie group isomorphism $F:\SDiff_I( \tilde{M}) \to \SDiff(M)$ (from Lemma \ref{lem:lie.group.iso}) is also {\it an isometry} of the right-invariant Riemannian metrics $\langle \cdot, \cdot \rangle_{\tilde{M}} $ and $\langle \cdot, \cdot \rangle_{M}$ on $\SDiff_I( \tilde{M}) $ and $ \SDiff(M)$, respectively. In other words, we set the norm of a vector field $v \in \SVect(M)$ to be that of its lift 
$\tilde{v}\in \SVect_I(\tilde{M})$: 
$$
\norm{v}^2_{{M}} :=\int_{\tilde{M}} {g}(\tilde{v}, \tilde{v}) \, \tilde{\rho}  = 2\int_{{M}} g({v},  {v}) \, \rho = \norm{\tilde{v}}^2_{\tilde{M}}\,. 
$$
In this isometric normalization sectional curvatures of the group $\SDiff(M)$ and the subgroup $\SDiff_I(\tilde{M}) \subset \SDiff(\tilde{M})$ coincide:  $C_{M}(u,v)=C_{\tilde{M} } (\tilde{u}, \tilde{v})$.

\begin{remark}\label{rem:norm}
{\rm
Note that in the normalization $\norm{v}^2_{{M}} := \int_{{M}} g({v},  {v}) \, \rho = \frac{1}{2} \norm{\tilde{v}}^2_{\tilde{M}} ,$
the sectional curvatures of the group $\SDiff(M)$  and the subgroup $\SDiff_I(\tilde{M}) $ are related by a factor of 2, namely 
$C_{M}(u,v)=2C_{\tilde{M} } (\tilde{u}, \tilde{v})$, as the formula \eqref{eq:sec.curv.def} implies.
}
\end{remark}
\subsection{Stream functions}\label{sec:stream.func}

Let $\tilde{M}$ be a compact oriented surface and $\omega$ an area form on $\tilde{M}$. For a vector field $v\in \SVect(\tilde{M})$ consider the  1-form $\alpha = \omega(v, \cdot )$. 
The 1-form $\alpha$ is closed since $v$ is divergence-free. 
Consider a field $v$ for which it is an exact 1-form, i.e., the differential of a Hamiltonian function,  $df_v\coloneq\omega(v,\cdot )$. This Hamiltonian function $f_v$ (defined up to an additive constant) on $M$ is called the stream function of $v$. 
We restrict the Lie algebra $\SVect(\tilde{M})$ to the subalgebra $\SoVect(\tilde{M})$ of vector fields $v$ admitting a single-valued, zero-mean stream function.
The correspondence $v \leftrightarrow f_v$ is a Lie algebra isomorphism between $\SoVect(\tilde{M})$ and the Poisson algebra \[ C_0^{\infty}(\tilde{M})  = \left\{ f : \tilde{M}\to \mb R \mid f \text{ is smooth}, \, \int_{\tilde{M}} f\omega=0  \right\} , \quad \left\{ f,g \right\} = dg(v_f) ,\]
where $v_f=\sg f$ is the skew-gradient of $f$, such that $df(\cdot) =\omega(v_f, \cdot )$.


\begin{example}
{\rm Let $\tilde{M} = \mb T$ be the torus and $\tilde{\rho} = dx_1 \wedge dx_2 $ the area form on $\mb T$. The area form is 
a symplectic form on $\mb T$. A vector field $v = v_1\p_{x_1} + v_2 \p_{x_2} \in \SVect(\mb T)$ can be identified with a closed 
1-form $ \tilde{\rho}(v, \cdot ) = -v_2 dx_1 + v_1dx_2. $
Let $\SoVect(\mb T)$ be the space of exact divergence-free vector fields on the torus. They are associated to the space $\mf g^{\mb T}$ of zero-mean stream functions:
\begin{equation}\label{eq:stream.func}
\begin{gathered}
\SoVect (\mb T) =  \left\{ v_f = \frac{\p f}{\p x_2}\p_{x_1}  - \frac{\p f}{\p x_1}\p_{x_2}  \, \Big| \,  L_{v_f}\tilde{\rho}= \text{div}(v_f) \tilde{\rho} = 0  \right\}    \\
\longleftrightarrow \quad \mf g^{\mb T} \coloneq \left\{ f:\mathbb{T} \to \mathbb{R}\mid df =  \tilde{\rho}(v_f, \cdot  ) =- v_2dx_1 + v_1dx_2 , \quad \int_\mathbb{T}f \, \tilde{\rho} = 0 \right\}.
\end{gathered}
\end{equation}
This is a Lie algebra isomorphism, where the Lie bracket of vector fields corresponds to the canonical Poisson bracket of stream functions:
\[  \left[v_f, w_g\right] = -(v_f w_g - w_g v_f )  \quad \longleftrightarrow \quad  \left\{f, g\right\} = \frac{\p f}{\p x_2}\frac{\p g}{\p x_1} - \frac{\p f}{\p x_1}\frac{\p g}{\p x_2}  .\]
}
\end{example}

\begin{example}
{\rm 
Let $\tilde{M} = \mb S^2$ and consider the area form $\mu = \sin \theta \, d\theta \wedge d\phi$, which is also a symplectic form on $\mb S^2$. A vector field $v(\theta, \phi) = v_\theta \p_\theta + v_\phi \p_\phi\in \SVect(\mb S^2)$ can be identified with a closed 1-form $\mu(v, \cdot ) = \sin \theta(- v_\phi d\theta +  v_\theta d\phi ).$ On  $\mb S^2$ all closed 1-forms are exact and
a field $v$ corresponds to a zero-mean stream function $f_v:\mb S^2 \to \mb R$. 	The corresponding Lie algebra isomorphism of vector fields and their stream functions
\begin{equation}\label{eq:stream.func.sphere}
\begin{gathered}
\SVect (\mb S^2) =  \left\{ v_f = \frac{1}{\sin \theta}\frac{\p f}{\p \phi} \p_\theta  -\frac{1}{\sin \theta} \frac{\p f}{\p \theta}\p_\phi \, \Big| \,  L_{v_f}\mu= \text{div}(v_f) \mu = 0  \right\}    \\
\longleftrightarrow \quad \mf g^{\mb S^2} \coloneq \left\{ f: \mb S^2 \to \mathbb{R}\mid df = \mu(v_f, \cdot  ) = \sin\theta (-v_\phi d\theta + v_\theta d\phi) , \int_{\mb S^2} f \mu = 0 \right\}
\end{gathered}
\end{equation}
is as follows:
\begin{equation}\label{eq:poisson.sphere}
\left[v_f, w_g\right] = -(v_f w_g - w_g v_f )  \quad \longleftrightarrow \quad  \left\{f, g\right\} = \frac{1}{\sin\theta} \left(\frac{\p f}{\p \phi}\frac{\p g}{\p \theta} - \frac{\p f}{\p \theta}\frac{\p g}{\p \phi}\right).
\end{equation}
}
\end{example}

\begin{remark}\label{rem:stream.func.composed.with.I}
{\rm
Let $\tilde{M}$ be the  orientation cover of a non-orientable surface $M = \tilde{M}/I$. Then the Lie algebra 
of $I$-invariant  exact vector fields is isomorphic to the Lie algebra of $I$-anti-invariant zero-mean  stream functions on $\tilde{M}$.
}
\end{remark}

\begin{lemma}\label{lem:stream.func.composed.with.I}
Let $\tilde{M}$ be the double orientation covering of a non-orientable surface $M = \tilde{M}/I$. 
The Lie algebra $\SoVect_I(\tilde{M} )$  of exact vector fields $v$ on $\tilde{M}$ satisfying $I_*v = v\circ I$ is isomorphic to the subalgebra of zero-mean stream functions $f$ on $\tilde{M}$ satisfying $f\circ I = -f$. 
\end{lemma}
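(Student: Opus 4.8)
The plan is to leverage the Lie algebra isomorphism $v \leftrightarrow f_v$ between $\SoVect(\tilde M)$ and the zero-mean stream functions $C_0^\infty(\tilde M)$ established in Section~\ref{sec:stream.func}, and to show that it intertwines the two natural involutions whose fixed-point sets are exactly the subspaces in question. Write $\Phi(v)=f_v$ for this isomorphism. On the vector field side, the relevant involution is $\sigma(v) := \Ad_I v = I_* v$, whose $+1$-eigenspace is $\SoVect_I(\tilde M) = \{v : I_* v = v\circ I\}$. On the stream function side, I take $\tau(f) := -f\circ I$; since $I$ is an involution, $\tau^2(f) = f\circ(I\circ I) = f$, so $\tau$ is indeed an involution, and its $+1$-eigenspace is precisely $\{f : f\circ I = -f\}$. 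It therefore suffices to prove the intertwining identity $\Phi\circ\sigma = \tau\circ\Phi$: the isomorphism then restricts to an isomorphism of the corresponding eigenspaces.

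The heart of the argument is the computation of the stream function of $I_* v$, and here the orientation-reversing nature of $I$ enters decisively. Since $I$ reverses orientation, $I^*\omega = -\omega$, equivalently $I_*\omega = (I^{-1})^*\omega = I^*\omega = -\omega$. I would invoke the standard transformation law of Hamiltonian fields under a diffeomorphism $\psi$, namely that $\psi_* v_f$ is the Hamiltonian field of $f\circ\psi^{-1}$ with respect to $\psi_*\omega$ (which follows from $\iota_{\psi_* v_f}(\psi_*\omega)=\psi_*(\iota_{v_f}\omega)=\psi_*(df)=d(f\circ\psi^{-1})$). Applying this with $\psi = I$ shows that $I_* v_f$ is the Hamiltonian field of $f\circ I$ with respect to $-\omega$, which is the same as the Hamiltonian field of $-(f\circ I)$ with respect to $\omega$. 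Hence the stream function of $I_* v_f$ equals $-f\circ I$ up to an additive constant, i.e. $\Phi(\sigma(v)) = -f_v\circ I + c = \tau(\Phi(v)) + c$.

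It remains to pin down $c=0$ and to check the eigenspace bookkeeping. For the constant I would integrate against the measure $\mu = |\tilde\rho|$: since $I$ preserves $\mu$, one has $\int_{\tilde M}(f\circ I)\,\mu = \int_{\tilde M} f\,\mu = 0$ by the zero-mean condition, so both $\Phi(\sigma(v))$ and $\tau(\Phi(v))$ are genuine zero-mean stream functions and the constant must vanish. This computation also confirms that $\sigma$ maps $\SoVect(\tilde M)$ to itself, as needed for $\sigma$ to be a well-defined involution there, divergence-freeness of $I_* v$ following from $I$ being measure-preserving. With $\Phi\circ\sigma = \tau\circ\Phi$ in hand, a field $v$ satisfies $\sigma(v)=v$ if and only if $\Phi(v)$ satisfies $\tau(\Phi(v))=\Phi(v)$, i.e. $f_v\circ I = -f_v$; since $\Phi$ is already a Lie algebra isomorphism, its restriction is an isomorphism of $\SoVect_I(\tilde M)$ onto the subalgebra of $I$-anti-invariant zero-mean stream functions, as claimed.

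The main obstacle I anticipate is purely sign-bookkeeping: correctly tracking the minus sign produced by $I^*\omega = -\omega$ through the Hamiltonian transformation law, and carefully distinguishing the action of $I$ on the oriented volume form $\tilde\rho$ (which changes sign) from its action on the associated measure $\mu = |\tilde\rho|$ (which is preserved) when discharging the additive constant. Everything else is a formal consequence of intertwining an isomorphism with two involutions.
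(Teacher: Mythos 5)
Your proof is correct and follows essentially the same route as the paper's: the decisive point in both is that $I$-invariance of $v$ together with $I^*\omega=-\omega$ (since $I$ reverses orientation) forces the stream function to satisfy $f\circ I=-f$ up to an additive constant, which the zero-mean condition then kills. Your extra scaffolding with the involutions $\sigma$, $\tau$ and the intertwining identity is a tidier packaging of the same computation, not a different argument.
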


\begin{proof}
Let $f$ be the stream function associated to a vector field $v_f \in \SoVect_I(\tilde{M})$, i.e. $df(\cdot)=\omega(v_f, \cdot)$.  
Since $v$ is $I$-invariant, while $\omega$ is $I$-anti-invariant ($I_*v = v\circ I$, $I^*\omega=-\omega$), 
the differential $df$ is $I$-anti-invariant,  $I^*df = -df$. This proves that $f$ is $I$-anti-invariant up to an additive constant,
$f\circ I=-f+C$. The zero-mean condition implies that $C=0$ and the statement follows,
 $f\circ I = -f$.
\end{proof}

Below we will denote by $\mf g_{1}^{\tilde{M} } $ and $\mf g_{-1}^{\tilde{M} } $ 
the Lie subalgebras in   $\SoVect_I(\tilde{M} )$ spanned, respectively, by $I$-invariant and $I$-anti-invariant zero-mean  stream functions. 
According to Lemma~\ref{lem:stream.func.composed.with.I} the Lie algebra of exact vector fields on $M = \tilde{M}/I$ is
$\SVect({M} )\simeq \mf g_{-1}^{\tilde{M} } $. 

\section{Curvatures of SDiff$(\mathbb K)$}\label{sec:klein.bottle.case}
\subsection{Riemannian structure on SDiff$(\mathbb K)$}

Consider the Klein bottle $\mb K$ and its orientation double covering $\mb T$, see Example~\ref{def:klein.bottle}.
We fix the standard flat metric  $g = dx_1^2 + dx_2^2 $ on~$\mb T$. 
Restrict $\SDiff(\mb T)$ to Hamiltonian diffeomorphisms $\SoDiff (\mb T )$, i.e. 
measure-preserving diffeomorphisms isotopic to the identity and leaving the ``centre of mass'' of the torus fixed. The subgroup $\SoDiff(\mb T)$ is a totally geodesic submanifold of the group $\SDiff(\mb T)$, see~\cite{arnold_topological_2021}. Its Lie algebra is $\SoVect(\mb T)$ from~\eqref{eq:stream.func}. 

Denote by $\SDiff(\mb K)$ the subgroup of $I$-invariant diffeomorphisms $\SoDiff_I(\mb T)$ (via the isomorphism of Lemma~\ref{lem:lie.group.iso}):
\[
\SDiff(\mb K) \simeq \SoDiff_I(\mb T) = 
\left\{ 
\tilde{\Phi}\in \SoDiff(\mb T)\ \vert\ \tilde{\Phi}\circ I = I\circ \tilde{\Phi}   
\right\},
\]
and its Lie algebra $\SVect(\mb K) \simeq \SoVect_I(\mb T)=\{v\in \SoVect(\mb T)~|~I_*v = v\circ I\}$.

Let the vector fields $v_f, w_g$ on $\mb T$ be associated to the stream functions $f,g \in \mf g^{\mb T}$. The Riemannian metric at the identity of the group is given by
\begin{equation*}\label{eq:riemannian.metric.stream.func}
\langle v_f, w_g \rangle  = \int_{\mb T} \left( \frac{\p f}{\p x_1} \frac{\p g}{\p x_1} + \frac{\p f}{\p x_2} \frac{\p g}{\p x_2}\right) \, dx_1dx_2 \eqcolon \langle f, g \rangle_{\mf g^{\mb T}}.
\end{equation*}
To describe below the curvatures for $\SDiff(\mb K)$ we first recall Arnold's results for the torus, and following~\cite{arnold_sur_1966} we consider the complexified Lie algebra $\mf g^{\mb T}_{\mb C}$ of $\mf g^{\mb T}$, together with a $\mb C$-linearly extended inner product $\langle.\,,. \rangle$ on $\mf g^{\mb T}_{\mb C}$.

\begin{theorem}[\cites{arnold_sur_1966, arnold_mathematical_1989}]\label{th:arnold.khesin}
Consider the Lie algebra $$\mf g^{\mb T}_{\mb C}= \{ f: \mb T \to \mb R \mid \int_{\mb T} f \, dx_1dx_2 = 0 \} \otimes \mathbb{C}$$ with the basis
\[  \{e_k \coloneq e^{i(k_1x_1 + k_2x_2)} \mid k_1, k_2 \in \mathbb{Z}\}, \]
and let $S_{\mathbb{T}}$ denote the area of the torus.
Then
\begin{small}
\begin{enumerate}
\item $\displaystyle\langle e_k, e_l \rangle= 0,$ for $k+l\neq 0$,\\
\item $\displaystyle\langle e_k, e_{-k} \rangle = \norm{k}^2 S_{\mathbb{T}} = (k_1^2+k_2^2) S_{\mb T} $,\\
\item $ \displaystyle\left\{e_k, e_l \right\} =(k\times l )e_{k+l} =(k_1l_2 - k_2l_1)e_{k+l}$,\\
\item $\displaystyle B(e_k, e_l) = (k\times l) \frac{\norm{k}^2}{\norm{k+l}^2} e_{k+l}$,\\
\item $\displaystyle \nabla_{e_k}e_l = \frac{(k_1l_2 - k_2l_1) (k_1l_1 + k_2l_2 + \norm{l}^2) }{\norm{k+l}^2} e_{k+l}$,\\
\item $\displaystyle\RCT_{k,l,m,n} = \langle \RCT(e_k, e_l)e_m, e_n \rangle = 0$ if $k+l+m+n \neq 0$,\\
\item $\displaystyle\RCT_{k,l,m,n} = (d_{ln}d_{km} - d_{lm}d_{kn}) S_{\mathbb{T}}$ if $k+l+m+n = 0$,\\
where $\displaystyle d_{uv} = \frac{(u\times v)^2}{\norm{u+v}}$
\end{enumerate}
\end{small}
\end{theorem}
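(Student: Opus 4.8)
The plan is to verify each of the seven items by direct computation, since all the structures in play---the inner product, the Poisson bracket, the operator $B$, and the connection $\nabla$---act diagonally on the exponential basis, shifting Fourier indices additively. Throughout I would use that $e_k$ is a joint eigenfunction, $\p_{x_1}e_k = ik_1 e_k$ and $\p_{x_2}e_k = ik_2 e_k$, together with the orthogonality relation $\int_{\mb T} e_m\, dx_1 dx_2 = S_{\mb T}$ when $m=0$ and $0$ otherwise.

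First I would establish (1)--(3). Substituting the eigenvalue relations into the metric gives $\langle e_k, e_l \rangle = -(k_1 l_1 + k_2 l_2)\int_{\mb T} e_{k+l}\, dx_1 dx_2$, which vanishes unless $k+l=0$ and equals $\norm{k}^2 S_{\mb T}$ when $l=-k$; this yields (1) and (2). The same substitution into $\{e_k, e_l\} = \p_{x_2}e_k\,\p_{x_1}e_l - \p_{x_1}e_k\,\p_{x_2}e_l$ produces the cross product $k\times l = k_1 l_2 - k_2 l_1$, giving (3). For (4) I would invoke the defining identity $\langle B(e_k, e_l), e_m \rangle = \langle e_k, \{e_l, e_m\} \rangle$. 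The right-hand side equals $(l\times m)\langle e_k, e_{l+m}\rangle$, which is nonzero only when $m = -(k+l)$, where it becomes $(k\times l)\norm{k}^2 S_{\mb T}$ after using $l\times(-(k+l)) = k\times l$. Writing $B(e_k, e_l) = c\, e_{k+l}$ and pairing against $e_{-(k+l)}$ forces $c\,\norm{k+l}^2 S_{\mb T} = (k\times l)\norm{k}^2 S_{\mb T}$, which fixes the coefficient in (4). Item (5) then follows by inserting (3) and (4) into the Koszul formula \eqref{eq:Koszul.lie}, noting that $B(e_l, e_k) = -(k\times l)\frac{\norm{l}^2}{\norm{k+l}^2} e_{k+l}$, and collapsing the resulting bracket via $\norm{k+l}^2 - \norm{k}^2 + \norm{l}^2 = 2(k_1 l_1 + k_2 l_2 + \norm{l}^2)$.

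Finally, for (6) and (7) I would exploit that each covariant derivative shifts the index by the sum of its arguments, so by \eqref{eq:Rie.curv.end.def} the field $\RCT(e_k, e_l)e_m$ is a scalar multiple of $e_{k+l+m}$; its pairing with $e_n$ therefore vanishes unless $k+l+m+n=0$, which is (6). The substantive step is (7): I would expand $\RCT(e_k, e_l)e_m$ into its three compositions of $\nabla$'s from (5), pair with $e_n$, and retain only the surviving Fourier mode using (2).

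The hard part will be reducing the resulting rational expression in cross products and norm ratios to the clean antisymmetric form $(d_{ln}d_{km} - d_{lm}d_{kn}) S_{\mb T}$. This requires systematically eliminating $n = -(k+l+m)$, so that each intermediate index sum can be rewritten as a pair $u,-u$, and then recognizing the cancellations that assemble the symmetric quantities $d_{uv}$. Since this is Arnold's classical computation, I would organize the bookkeeping following \cites{arnold_sur_1966, arnold_mathematical_1989} rather than reproduce every cancellation, checking only that the symmetries $\RCT_{k,l,m,n} = -\RCT_{l,k,m,n} = \RCT_{m,n,k,l}$ are manifest in the final expression as a consistency test.
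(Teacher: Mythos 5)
The paper itself offers no proof of this theorem---it is quoted as a classical result of Arnold with only a citation---so your direct verification is, if anything, more than the paper provides: the computations for (1)--(3) via the eigenvalue relations $\p_{x_j}e_k=ik_je_k$, the determination of the coefficient of $B(e_k,e_l)$ by pairing against $e_{-(k+l)}$, and the collapse of the Koszul formula using $\norm{k+l}^2-\norm{k}^2+\norm{l}^2=2(k_1l_1+k_2l_2+\norm{l}^2)$ are all correct, and (6) follows as you say from the additive index shift. Deferring the cancellation bookkeeping in (7) to the cited works of Arnold is exactly what the paper does, and the symmetries $\RCT_{k,l,m,n}=-\RCT_{l,k,m,n}=\RCT_{m,n,k,l}$ are indeed manifest in the stated formula (since $d_{uv}=d_{vu}$), so the consistency check you propose is valid.
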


\begin{corollary}\label{cor:basis.g}
A basis for the real Lie algebra $\mf{g}^{\mb T}$ is given by
\begin{equation}\label{eq:Rbasis} \left\{ \cos(k_1x_1 + k_2x_2) = \frac{1}{2}(e_k + e_{-k}), \quad \sin(k_1x_1 + k_2x_2) = \frac{1}{2i}(e_k - e_{-k}) \right\}, 
\end{equation}
with $(k_1, k_2) \in \big(\mathbb{Z}^2\setminus \{(0,0)\} \big)/\{-1, 1\}$, where $\mathbb{Z}^2/\{-1, 1\} $ is the quotient of $\mb Z^2$ under the equivalence $(k_1,k_2)\sim (-k_1,-k_2)$.
%
Equivalently the basis~\eqref{eq:Rbasis} is given by
\begin{gather}\label{eq:Rbasis1}
\big\{  \cos(k_1x_1)\cos(k_2x_2) \ \ k\in \mb N_0^2\setminus (0,0), \quad  \cos(k_1x_1)\sin(k_2x_2) \ \ k \in \mb N_0\times \mb N,
\\  \sin(k_1x_1)\sin(k_2x_2)  \ \ k\in \mb N^2, \quad  \sin(k_1x_1)\cos(k_2x_2)  \ \ k\in \mb N\times \mb N_0 \big \},\nonumber
\end{gather}
where $\mb N_0 = \{ 0, 1, 2, \dots \}$ and $\mb N = \{1, 2, \dots \}$.
\end{corollary}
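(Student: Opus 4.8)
The plan is to handle the two presentations of the basis in turn, taking as given the complex exponential basis $\{e_k\}$ recorded in Theorem~\ref{th:arnold.khesin}, and then to reconcile the two forms by elementary trigonometric identities; no analysis beyond the standard Fourier expansion is required, so the work is almost entirely linear algebra and bookkeeping.

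First I would establish that the functions in~\eqref{eq:Rbasis} form a real basis of $\mf g^{\mb T}$. Since $\{e_k \mid k \in \mb Z^2 \setminus \{(0,0)\}\}$ is the standard Fourier basis of the complexification $\mf g^{\mb T}_{\mb C}$ of zero-mean smooth functions on $\mb T$, the real algebra $\mf g^{\mb T}$ is exactly the fixed-point set of complex conjugation, i.e.\ the functions $f = \sum c_k e_k$ with $\overline{c_k} = c_{-k}$. Restricting attention to a single conjugate pair $\{e_k, e_{-k}\}$, the real-valued functions in its complex span are precisely the real span of $\cos(k_1x_1+k_2x_2) = \frac{1}{2}(e_k + e_{-k})$ and $\sin(k_1x_1+k_2x_2) = \frac{1}{2i}(e_k - e_{-k})$. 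Because $\cos(k_1x_1+k_2x_2) = \cos(-k_1x_1-k_2x_2)$ while $\sin(k_1x_1+k_2x_2) = -\sin(-k_1x_1-k_2x_2)$, the indices $k$ and $-k$ produce the same two-dimensional real subspace, which explains the passage to the quotient $(\mb Z^2 \setminus \{(0,0)\})/\{-1,1\}$ and the choice of one representative per class. Collecting these subspaces over all classes yields a real basis, the zero-mean condition being automatic because $k \neq (0,0)$.

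Next I would show that the product form~\eqref{eq:Rbasis1} spans the same space and is itself a basis. The functions $\cos(k_1x_1+k_2x_2)$ and $\sin(k_1x_1+k_2x_2)$ expand by the angle-addition formulas as $\cos(k_1x_1)\cos(k_2x_2) - \sin(k_1x_1)\sin(k_2x_2)$ and $\sin(k_1x_1)\cos(k_2x_2) + \cos(k_1x_1)\sin(k_2x_2)$, while conversely the product-to-sum identities express each of the four products in~\eqref{eq:Rbasis1} as a linear combination of $\cos(k_1x_1 \pm k_2x_2)$ and $\sin(k_1x_1 \pm k_2x_2)$; hence the two real spans coincide. To see directly that~\eqref{eq:Rbasis1} is a basis, I would identify it with the tensor product of the standard one-dimensional real Fourier bases $\{1,\cos(nx),\sin(nx)\}$ on the two circle factors: the four families are exactly the combinations $\{\cos,\sin\}(k_1x_1)\otimes\{\cos,\sin\}(k_2x_2)$, and the stated ranges (with their $\mb N_0$ versus $\mb N$ distinctions) are precisely what omits the vanishing factors $\sin(0\cdot x_j)$ and the constant $\cos(0\cdot x_1)\cos(0\cdot x_2)\equiv 1$, the latter excluded by the zero-mean condition.

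The main obstacle is the bookkeeping of the index sets rather than any substantive step. One must check that the four ranges in~\eqref{eq:Rbasis1} neither omit nor double-count any tensor-product basis element, and that under the product-to-sum dictionary they match, with the right multiplicity, the $\cos$/$\sin$ pairs indexed by $(\mb Z^2\setminus\{(0,0)\})/\{-1,1\}$ in~\eqref{eq:Rbasis}; in particular one should confirm that the difference arguments $\cos(k_1x_1-k_2x_2)$ and $\sin(k_1x_1-k_2x_2)$ arising from product-to-sum land in the span of~\eqref{eq:Rbasis} via the representatives of the classes of $(k_1,-k_2)$. The delicate cases are the boundaries $k_1=0$ and $k_2=0$, where one trigonometric factor degenerates; once the ranges are aligned with the tensor-product basis, linear independence and spanning are immediate.
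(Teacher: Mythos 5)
Your argument is correct and follows the same (standard) route that the paper leaves implicit: the corollary is stated without proof, being the usual passage from the complex Fourier basis $\{e_k\}$ to its real form via conjugation symmetry $\overline{c_k}=c_{-k}$, together with the product-to-sum identities and the index bookkeeping you describe. Nothing is missing; your care with the degenerate cases $k_1=0$ or $k_2=0$ and with the quotient by $(k_1,k_2)\sim(-k_1,-k_2)$ is exactly the content of the statement.
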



\subsection{Basis for SVect$(\mathbb K)$}
Denote the subalgebra of zero-mean stream functions on $\mb T$ such that $f\circ I  =- f$ by $\mf g^{\mb T}_{-1}. $
By Lemma~\ref{lem:stream.func.composed.with.I}, we have the following isomorphisms of Lie algebras
\[ \SVect(\mb K) \simeq \SoVect_I(\mb T) \simeq \mf g_{-1}^{\mb T} . \] 
Note that the basis $\left\{e_k = e^{i(k_1x_1 + k_2x_2)} \right\}$ for $\mf g^{\mb T} \otimes \mb C$ from Theorem~\ref{th:arnold.khesin} does not restrict to a basis for $\mf g^{\mb T}_{-1}\otimes \mb C$ since $e_k\notin \mf g^{\mb T}_{-1}\otimes \mb C$ for arbitrary $k \in \mb Z^2$.  However, a part of the basis in~\eqref{eq:Rbasis1} are elements of $\mf g^{\mb T}_{-1}$. Define index sets $J^{\Re}, J^{\Im} \subset \mathbb{N}_0^2$ to remove zero functions in the basis for $\mf g^{\mb T}_{-1}$\,:
\begin{equation}\label{eq:J}
\begin{gathered}
J^{\Re} = \left\{ k = (k_1, k_2) \in \mathbb{N}_0^2 \mid  k \neq (n, 0) \text{ and }  k \neq (0, 2n) \text{ for all }n\in \mathbb{N}_0 \right\}\\
J^{\Im} = \left\{ k = (k_1, k_2) \in \mathbb{N}_0^2 \mid k \neq (0, 2n) \text{ for all }n\in \mathbb{N}_0 \right\}.
\end{gathered}
\end{equation}


\begin{theorem}\label{th:basis.g-1}
An orthogonal basis for the real Lie algebra $\SVect(\mb K) \simeq \mf g^{\mb T}_{-1}$ is given by
\begin{alignat}{6}\label{eq:B}
\mathcal{B} = \  & \big\{ \big. \mcE^\Re_k   &  & = 4\cos(k_1x_1)\cos(k_2x_2) &  & =      &  & e_k + e_{-k} + e_{\overline{k} } + e_{-\overline{k}}, \quad  &  & k \in J^{\Re}, \  &  & k_2 \text{ odd} \big. \big\}    \\
\cup \           & \big\{ \big. \mcE^{\Im}_k &  & = 4\cos(k_1x_1)\sin(k_2x_2) &  & = -i ( &  & e_k - e_{-k} - e_{\overline{k} } + e_{-\overline{k}}), \quad &  & k \in J^{\Im}, \  &  & k_2 \text{ odd} \big. \big\}  \nonumber  \\
\cup \           & \big\{ \big. \mcE^{\Re}_k &  & = 4\sin(k_1x_1)\sin(k_2x_2) &  & =      &  & e_k + e_{-k} - e_{\overline{k} } - e_{-\overline{k}}, \quad  &  & k \in J^{\Re}, \  &  & k_2 \text{ even} \big. \big\}\nonumber   \\
\cup \           & \big\{ \big. \mcE^{\Im}_k &  & = 4\sin(k_1x_1)\cos(k_2x_2) &  & = -i ( &  & e_k - e_{-k} + e_{\overline{k} } - e_{-\overline{k}}), \quad &  & k \in J^{\Im}, \  &  & k_2 \text{ even} \big. \big\} ,\nonumber
\end{alignat}
where $k = (k_1, k_2) \in \mb Z^2$,  $-k \coloneq (-k_1, -k_2)$ and $\overline{k} \coloneq (k_1, -k_2)$. 
%
\end{theorem}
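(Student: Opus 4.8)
The plan is to realize $\mathcal{B}$ as exactly the subset of the real Fourier basis \eqref{eq:Rbasis1} that is sent to itself by $I$ up to the sign $-1$, and then to read off spanning and orthogonality directly from Corollary~\ref{cor:basis.g}.

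First I would record that the pullback $I^{*}\colon \mf g^{\mb T}\to\mf g^{\mb T}$, $f\mapsto f\circ I$, is a linear involution: since $I^{2}=\Id_{\mb T}$ on the torus, one has $(I^{*})^{2}=\Id$, so $\mf g^{\mb T}$ splits as the direct sum of the eigenspaces $\mf g^{\mb T}_{+1}$ and $\mf g^{\mb T}_{-1}$ for the eigenvalues $\pm 1$. By Lemma~\ref{lem:stream.func.composed.with.I} the algebra $\SVect(\mb K)\simeq\mf g^{\mb T}_{-1}$ is precisely this $(-1)$-eigenspace, so it suffices to exhibit an orthogonal basis of $\mf g^{\mb T}_{-1}$.

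Next I would compute the action of $I^{*}$ on each basis vector of \eqref{eq:Rbasis1}. Substituting $x_1\mapsto 2\pi-x_1$ and $x_2\mapsto \pi+x_2$ and using $\cos(k_1(2\pi-x_1))=\cos(k_1x_1)$, $\sin(k_1(2\pi-x_1))=-\sin(k_1x_1)$, $\cos(k_2(\pi+x_2))=(-1)^{k_2}\cos(k_2x_2)$ and $\sin(k_2(\pi+x_2))=(-1)^{k_2}\sin(k_2x_2)$, one finds that $\cos(k_1x_1)\cos(k_2x_2)$ and $\cos(k_1x_1)\sin(k_2x_2)$ are eigenvectors with eigenvalue $(-1)^{k_2}$, while $\sin(k_1x_1)\sin(k_2x_2)$ and $\sin(k_1x_1)\cos(k_2x_2)$ are eigenvectors with eigenvalue $(-1)^{k_2+1}$. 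Hence every Fourier mode of \eqref{eq:Rbasis1} is an $I^{*}$-eigenvector, and $\mf g^{\mb T}_{-1}$ is spanned exactly by those modes whose eigenvalue equals $-1$: the two cosine-leading families with $k_2$ odd and the two sine-leading families with $k_2$ even, which are precisely the four families displayed in $\mathcal{B}$.

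It then remains to check that these selected modes are indexed precisely by $J^{\Re}$ and $J^{\Im}$ together with the stated parities, and this bookkeeping step is the only place I expect any fiddliness. For $k_2$ odd the exclusions $k\neq(n,0)$ and $k\neq(0,2n)$ defining $J^{\Re}$ (resp. $k\neq(0,2n)$ defining $J^{\Im}$) are automatic, so one recovers all nonzero $\cos\cos$ and $\cos\sin$ modes; for $k_2$ even these same exclusions force $k_1\geq 1$ and $k_2\geq 2$, which is exactly the range on which $\sin\sin$ and $\sin\cos$ are nonzero and occur in \eqref{eq:Rbasis1}. Thus $\mathcal{B}$ is a complete and non-redundant list of the $(-1)$-eigenmodes. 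Finally, orthogonality of $\mathcal{B}$ is inherited from the orthogonality of the full Fourier basis \eqref{eq:Rbasis1} in the metric $\langle\cdot,\cdot\rangle$, which follows from Theorem~\ref{th:arnold.khesin}(1)--(2); the zero-mean condition holds since every mode carries a nonzero frequency; and the complex expressions in \eqref{eq:B} are merely the Euler-formula expansions of the four trigonometric products, with $-k=(-k_1,-k_2)$ and $\overline{k}=(k_1,-k_2)$. Assembling these observations yields the statement.
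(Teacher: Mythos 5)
Your approach coincides with the paper's: compute the action of $f\mapsto f\circ I$ on the real Fourier modes \eqref{eq:Rbasis1}, keep the $(-1)$-eigenvectors (using the splitting $\mf g^{\mb T}=\mf g^{\mb T}_{-1}\oplus\mf g^{\mb T}_{1}$, which the paper also invokes), and deduce orthogonality from Theorem~\ref{th:arnold.khesin}(1)--(2). Your eigenvalue computation is correct and reproduces exactly the four families in $\mathcal{B}$, and deferring the orthogonality check to the orthogonality of \eqref{eq:Rbasis1} is legitimate (the only nontrivial case is the pair $\mcE^{\Re}_k$, $\mcE^{\Im}_k$ sharing the same frequency shell, which is the one computation the paper writes out).

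There is, however, one concrete error in the bookkeeping step you yourself flagged as fiddly. You claim that for $k_2$ even the exclusions force $k_1\geq 1$ \emph{and} $k_2\geq 2$ for both the $\sin\sin$ and the $\sin\cos$ families. That is correct for $J^{\Re}$ (the $\sin\sin$ family), but $J^{\Im}$ only excludes the points $(0,2n)$, so for the $\sin\cos$ family with $k_2$ even one obtains only $k_1\geq 1$, and $k_2=0$ is allowed. The modes $\mcE^{\Im}_{(k_1,0)}=4\sin(k_1x_1)$ are genuine nonzero elements of $\mf g^{\mb T}_{-1}$ (indeed $\sin(k_1(2\pi-x_1))=-\sin(k_1x_1)$, and they appear in \eqref{eq:Rbasis1} since $\sin\cos$ ranges over $\mb N\times\mb N_0$). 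Taken literally, your stated range would omit these modes and the resulting list would fail to span $\mf g^{\mb T}_{-1}$. The fix is simply to treat $J^{\Re}$ and $J^{\Im}$ separately rather than applying ``these same exclusions'' to both. A final minor remark: the paper's proof also verifies closedness of $\operatorname{span}\mathcal{B}$ under the Poisson bracket; you implicitly delegate this to Lemma~\ref{lem:stream.func.composed.with.I} and the preceding identification $\SVect(\mb K)\simeq\mf g^{\mb T}_{-1}$, which is acceptable given how the paper is organized.
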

Introduce the following stream functions 
\begin{equation}\label{eq:Compbasis}
\xi_k\coloneq  a_ke_k + a_{-k} e_{-k} + a_{\overline{k}} e_{\overline{k}} + a_{-\overline{k}} e_{-\overline{k}}, \quad a_k\in\mathbb C 
\end{equation}
with
\begin{equation}\label{eq:CompCond}
a_{-k} = \overline{a_k}, \quad a_{\overline{k}} = (-1)^{k_2+1}\overline{a_k},
\end{equation}
which we will denote by $\xi_{k}$ or $\xi_{(k_1, k_2)} $ if the value of $a_k$ is irrelevant. (Note that conditions~\eqref{eq:CompCond} imply that $  a_{-\overline{k}} = (-1)^{k_2+1} a_k$.)

The notation~\eqref{eq:Compbasis} compresses the information about whether $a_k$ is real, pure imaginary or complex and $k_2$ being odd or even. For instance, for an odd value of $k_2$, see also Figure~\ref{fig:vf}:
\begin{itemize}
\item if $a_k = \alpha$ is real (hence the superscript $\Re$ in $\mcE^{\Re}_k$) and $k_2$ is odd, then \\
$\xi_k=  \alpha\big(e_k + e_{-k} +e_{\overline{k}} + e_{-\overline{k}}\big) = \alpha \mcE^{\Re}_k,$

\item if $a_k = i\beta$ is pure imaginary (hence the superscript $\Im$ in $\mcE^{\Im}_k)$ and $k_2$ is odd, then 
$
\xi_k=   i \beta \big(e_k - e_{-k} - e_{\overline{k} } + e_{-\overline{k}}\big) = -\beta \mcE^{\Im}_k
$

\item if $a_k=\alpha+i\beta$ and $k_2$ is odd, then
$$
\xi_k
= \alpha \big(e_k + e_{-k} +e_{\overline{k}} + e_{-\overline{k}}\big)+ i \beta \big(e_k - e_{-k} - e_{\overline{k} } + e_{-\overline{k}}\big) = \alpha \mcE^{\Re}_k -\beta \mcE^{\Im}_k,
$$
and hence $\xi_k$ is always a real-valued function. 
\end{itemize}	
Analogously one calculates the expressions for an even value $k_2$. 

\begin{proof}[Proof of Theorem~\ref{th:basis.g-1}]
Consider the stream function 
\[ f = e_k + e_{-k} + e_{\overline{k} } + e_{-\overline{k}} = 4\cos(k_1x_1)\cos(k_2x_2) \] from~\eqref{eq:Rbasis1}. We have
\[f \circ I = 4\cos(k_1(2\pi - x_1))\cos(k_2(\pi + x_2)) = 4\cos(k_1x_1)\cos(k_2x_2 + \pi k_2) . \]
The latter equals $-f$ if and only if $k_2$ is odd and it equals $f$ if and only if $k_2$ is even. Similar statements hold for the other elements in~\eqref{eq:Rbasis1}. The index sets are defined to remove zero elements from the basis. Since $\mf g^{\mb T} = \mf g^{\mb T}_{-1} \oplus \mf g^{\mb T}_1$ the even/odd restriction on $k_2$  implies that $\mathcal{B}\subset \mf g^{\mb T}_{-1}$.

Orthogonality is checked by employing the notation \eqref{eq:Compbasis} and expanding the inner product of basis elements $\xi_k, $ $\eta_l$ using $(1)$ and $(2)$ in Theorem~\ref{th:arnold.khesin}. The only non-trivial verification is that $ \langle \xi_k ,\, \eta_k \rangle = 0$  for  $\xi_k = \mcE^{\Re}_k$ and $\eta_k = \mcE^{\Im}_k$. In that case the coefficients of $\xi_k$ and $\eta_k$ are $a_k = 1$ and  $b_k = -i$, respectively, and hence 
\[  \langle \xi_k ,\, \eta_k \rangle = \langle e_k,\, i e_{-k} \rangle +\langle e_{-k},\, -i e_{k} \rangle + (-1)^{2k_2+2}\Big(\langle e_{\overline{k}} ,\,  -i e_{-\overline{k}}\rangle +\langle e_{-\overline{k}} ,\, i e_{\overline{k}}\rangle  \Big) = 0. \]

The closedness of the Poisson bracket is also verified by a direct computation in the basis  \eqref{eq:Compbasis}. For 
two elements 
\[\xi_k=  a_ke_k + a_{-k} e_{-k} + a_{\overline{k}} e_{\overline{k}} + a_{-\overline{k}} e_{-\overline{k}}, \quad a_k\in\mathbb C, \]
\[ \eta_l=  b_le_l + b_{-l} e_{-l} + b_{\overline{l}} e_{\overline{l}} + b_{-\overline{l}} e_{-\overline{l}}, \quad b_l\in\mathbb C .\]
we obtain 
\[\{ \xi_k, \eta_l \}  = \sum_{j=\{l, -l, \overline{l}, -\overline{l}\}} (k\times j) \zeta_{k+j},\]
where $\zeta_{k+j}$ also has the form \eqref{eq:Compbasis} with coefficients $ c_{k+j} = a_kb_j,\,c_{-(k+j)} = a_{-k}b_{-j}, \,c_{\overline{k+j}} = a_{\overline{k}}b_{\overline{j}}, $ and $ c_{-(\overline{k+j})} = a_{-\overline{k}} b_{-\overline{j}} $.
\end{proof}

\begin{corollary}\label{cor:general.element.basis.klein}
A general element $\eta\in \SVect(\mb K) \simeq \mf g^{\mb T}_{-1}$ can be written as
$$\eta = \displaystyle \sum_{l \in \mathbb{Z}^2} b_le_l =  \sum_{l \in \mathbb{Z}^2} b_le^{i(k_1x_1 + k_2x_2)} , $$ with  constraints
$$		
b_{-l}\, e_{-l}= \overline{b_l} \, e_{-l},                           \\
\quad 
b_{\overline{l}} \, e_{\overline{l}}  = (-1)^{l_2+1} \, \overline{b_l} \, e_{\overline{l}},
$$
(and their combination $b_{-\overline{l}} \, e_{-\overline{l}} = (-1)^{l_2+1}\, b_l \, e_{-\overline{l}}.$)
In particular, it is enough to specify the elements $b_le_l$ for $l\in \mathbb{N}_0^2 \subset \mathbb{Z}^2$, i.e. in the first quadrant. (Here $\Re (b_l) = 0$ for $l\in \mb N_0^2 \setminus  J^{\Re}$ and $\Im(b_l) = 0 $ for $l \in \mb N_0^2 \setminus  J^{\Im}$.)
\end{corollary}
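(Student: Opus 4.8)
The plan is to expand an arbitrary $\eta\in\mf g^{\mb T}_{-1}$ in the complex exponential basis $\{e_l=e^{i(l_1x_1+l_2x_2)}\mid l\in\mb Z^2\setminus\{0\}\}$ of the complexified algebra, writing $\eta=\sum_l b_le_l$ (the zero-mean condition being exactly $b_0=0$), and then to translate the two properties characterizing $\mf g^{\mb T}_{-1}$ — that $\eta$ is a \emph{real-valued} function and that $\eta\circ I=-\eta$ — into linear relations among the coefficients $b_l$. First I would extract the reality constraint: since $\overline{e_l}=e_{-l}$, the identity $\overline{\eta}=\eta$ reads $\sum_l\overline{b_l}\,e_{-l}=\sum_l b_le_l$, and comparing coefficients gives $b_{-l}=\overline{b_l}$, the first displayed constraint.

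Next I would compute the pullback of a single basis element under the involution $I(x_1,x_2)=(2\pi-x_1,\pi+x_2)$ of Example~\ref{def:klein.bottle}. A direct substitution yields
\[
e_l\circ I=e^{i(2\pi l_1+\pi l_2)}\,e^{i(-l_1x_1+l_2x_2)}=(-1)^{l_2}\,e_{-\overline{l}},
\]
using $l_1\in\mb Z$ and $-\overline{l}=(-l_1,l_2)$. Substituting into $\eta\circ I=-\eta$ and reindexing by $m=-\overline{l}$ (so $l=-\overline{m}$ and $m_2=l_2$) gives $b_{-\overline{m}}=(-1)^{m_2+1}b_m$, i.e.\ the ``combination'' constraint $b_{-\overline{l}}=(-1)^{l_2+1}b_l$. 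Applying the reality relation at the index $\overline{l}$, namely $b_{-\overline{l}}=\overline{b_{\overline{l}}}$, then converts this into $b_{\overline{l}}=(-1)^{l_2+1}\overline{b_l}$, the second displayed constraint; conversely the two displayed constraints regenerate the combination, so the three relations are equivalent in pairs.

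Finally I would establish the first-quadrant reduction. The maps $l\mapsto -l$ and $l\mapsto\overline{l}$ generate a Klein four-group acting on $\mb Z^2$ by reflections in the two coordinate axes, and every orbit $\{l,-l,\overline{l},-\overline{l}\}$ contains a representative in the closed first quadrant $\mb N_0^2$; since the three relations express $b_{-l},b_{\overline{l}},b_{-\overline{l}}$ through $b_l$, prescribing $b_l$ for $l\in\mb N_0^2$ determines $\eta$ completely. The step I expect to be the main (if modest) obstacle is the careful treatment of the degenerate orbits lying on the axes, where the stabilizer is nontrivial and the relations become constraints on $b_l$ itself rather than links between distinct coefficients. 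For $l=(n,0)$ one has $\overline{l}=l$, and the second constraint forces $b_l=-\overline{b_l}$, i.e.\ $\Re(b_l)=0$; for $l=(0,2n)$ one has $-\overline{l}=l$ with $l_2$ even, and the combination forces $b_l=-b_l$, i.e.\ $b_l=0$ (so $\Re(b_l)=\Im(b_l)=0$), while for $l=(0,2n+1)$ no constraint arises. Matching these vanishing conditions against the definitions of $J^{\Re}$ and $J^{\Im}$ in~\eqref{eq:J} yields precisely the parenthetical claims that $\Re(b_l)=0$ on $\mb N_0^2\setminus J^{\Re}$ and $\Im(b_l)=0$ on $\mb N_0^2\setminus J^{\Im}$, completing the verification.
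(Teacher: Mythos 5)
Your proof is correct and follows essentially the same route as the paper: the core computation $e_l\circ I=(-1)^{l_2}e_{-\overline{l}}$ is exactly the pullback calculation used in the proof of Theorem~\ref{th:basis.g-1} (there phrased for the real trigonometric basis), and your careful analysis of the degenerate orbits on the axes recovers precisely the role of the index sets $J^{\Re}$, $J^{\Im}$ from~\eqref{eq:J}. The paper presents the corollary as an immediate repackaging of that theorem, so your direct verification via the reality condition $b_{-l}=\overline{b_l}$ and the anti-invariance condition is the intended argument.
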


\begin{corollary}
The group $\SDiff(\mb K)$ with the Lie algebra $\SVect(\mb K) \simeq  \mf g_{-1}^{\mb T} $ 
is a totally geodesic submanifold in the group $\SoDiff(\mb T)$. In particular, its curvatures in two-dimensional planes 
containing basis elements  \eqref{eq:Compbasis} 
can be computed  by using the curvatures in the ambient group $\SoDiff(\mb T)$.
\end{corollary}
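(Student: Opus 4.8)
The plan is to realise this corollary as the composition of two totally geodesic inclusions,
\[
\SDiff(\mb K)\simeq\SoDiff_I(\mb T)\ \subset\ \SoDiff(\mb T)\ \subset\ \SDiff(\mb T),
\]
and then to invoke the fact, recorded in the definition preceding Proposition~\ref{prop:isometry}, that the sectional curvatures of a totally geodesic submanifold coincide with the ambient ones in tangent directions.

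First I would record the outer inclusion, which requires no new work: $\SoDiff(\mb T)$ is totally geodesic in $\SDiff(\mb T)$ by~\cite{arnold_topological_2021}, as recalled above. The content is therefore the inner inclusion, which I would obtain by applying the argument of Proposition~\ref{prop:isometry} to $\SoDiff(\mb T)$ in place of the full group $\SDiff(M)$. The key point is that the involution $I(x_1,x_2)=(2\pi-x_1,\pi+x_2)$ is an orientation-reversing \emph{isometry} of the flat torus, so conjugation $c_I\colon\tilde\Phi\mapsto I\circ\tilde\Phi\circ I^{-1}$ is defined on $\SoDiff(\mb T)$, its fixed-point set being exactly $\SoDiff_I(\mb T)$. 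Using $I^*\tilde\rho=-\tilde\rho$ together with $\tilde\Phi^*\tilde\rho=\tilde\rho$ one checks that $c_I$ preserves the class of measure-preserving diffeomorphisms, and since $c_I$ also preserves being isotopic to the identity and the vanishing of the centre-of-mass (flux) invariant, it preserves $\SoDiff(\mb T)$. Because $I$ is a measure-preserving isometry of $\mb T$, its pushforward preserves the inner product $\langle v,w\rangle=\int_{\mb T}g(v,w)\,\tilde\rho$, so $c_I$ is an isometry of the right-invariant $L^2$ metric on $\SoDiff(\mb T)$.

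Next, exactly as in the proof of Proposition~\ref{prop:isometry}, I would note that the Koszul formula~\eqref{eq:Koszul.lie} for the Levi-Civita connection on $\SoDiff(\mb T)$ involves only the metric and the Lie bracket, both invariant under $c_I$; equivalently, the fixed-point set of the isometry $c_I$ is totally geodesic. Hence $\SoDiff_I(\mb T)=\SDiff(\mb K)$ is totally geodesic in $\SoDiff(\mb T)$. Combining the two inclusions, a geodesic of $\SDiff(\mb K)$ is a geodesic of $\SoDiff(\mb T)$ and hence of $\SDiff(\mb T)$, so $\SDiff(\mb K)$ is totally geodesic in $\SoDiff(\mb T)$; the curvature statement then follows, since for two basis vectors from~\eqref{eq:Compbasis} the sectional curvature $C(\xi_k,\eta_l)$ equals the ambient curvature in $\SoDiff(\mb T)$, which is computed through Theorem~\ref{th:arnold.khesin}.

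The main obstacle I anticipate is bookkeeping rather than analysis: one must ensure the isometry argument is applied to the \emph{restricted} group $\SoDiff(\mb T)$ and not merely to $\SDiff(\mb T)$. This amounts to verifying that the Hamiltonian conditions defining $\SoDiff(\mb T)$ --- isotopy to the identity and vanishing centre of mass --- are $c_I$-invariant, so that $\SoVect(\mb T)$ is an $I$-invariant subalgebra and $c_I$ restricts to an isometry of $\SoDiff(\mb T)$ with fixed set $\SoDiff_I(\mb T)$. Once this invariance is in place, the rest is a direct transcription of Proposition~\ref{prop:isometry} together with the transitivity of the totally geodesic property along the nested inclusions.
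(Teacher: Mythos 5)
Your proposal is correct and follows essentially the same route as the paper, which simply cites Proposition~\ref{prop:isometry} (the fixed-point set of the isometry-induced conjugation $c_I$ is totally geodesic); your additional care in checking that $c_I$ preserves the restricted group $\SoDiff(\mb T)$ is a reasonable filling-in of details the paper leaves implicit. The paper also offers a second, direct verification via the formula $\nabla_{\xi_k}\eta_l = \sum_{j}\Gamma_{k,j}\zeta_{k+j}$ with $\zeta_{k+j}\in\mathcal{B}$, but this is only an alternative to the argument you gave.
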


Indeed, it follows from Proposition \ref{prop:isometry}. It can also be seen directly, as  the covariant derivative of $\eta_l$ along $\xi_k$ will be given by a linear combination $ \nabla_{\xi_k}\eta_l =  \sum_{j=\{l, -l, \overline{l}, -\overline{l}\}} \Gamma_{k, j} \zeta_{k+j},$ with $\Gamma_{k,j} \in \mb R$ and $\zeta_{k+j}\in \mathcal B$, cf. (5) in Theorem \ref{th:arnold.khesin}

\begin{figure}
\includegraphics[width=0.7\linewidth]{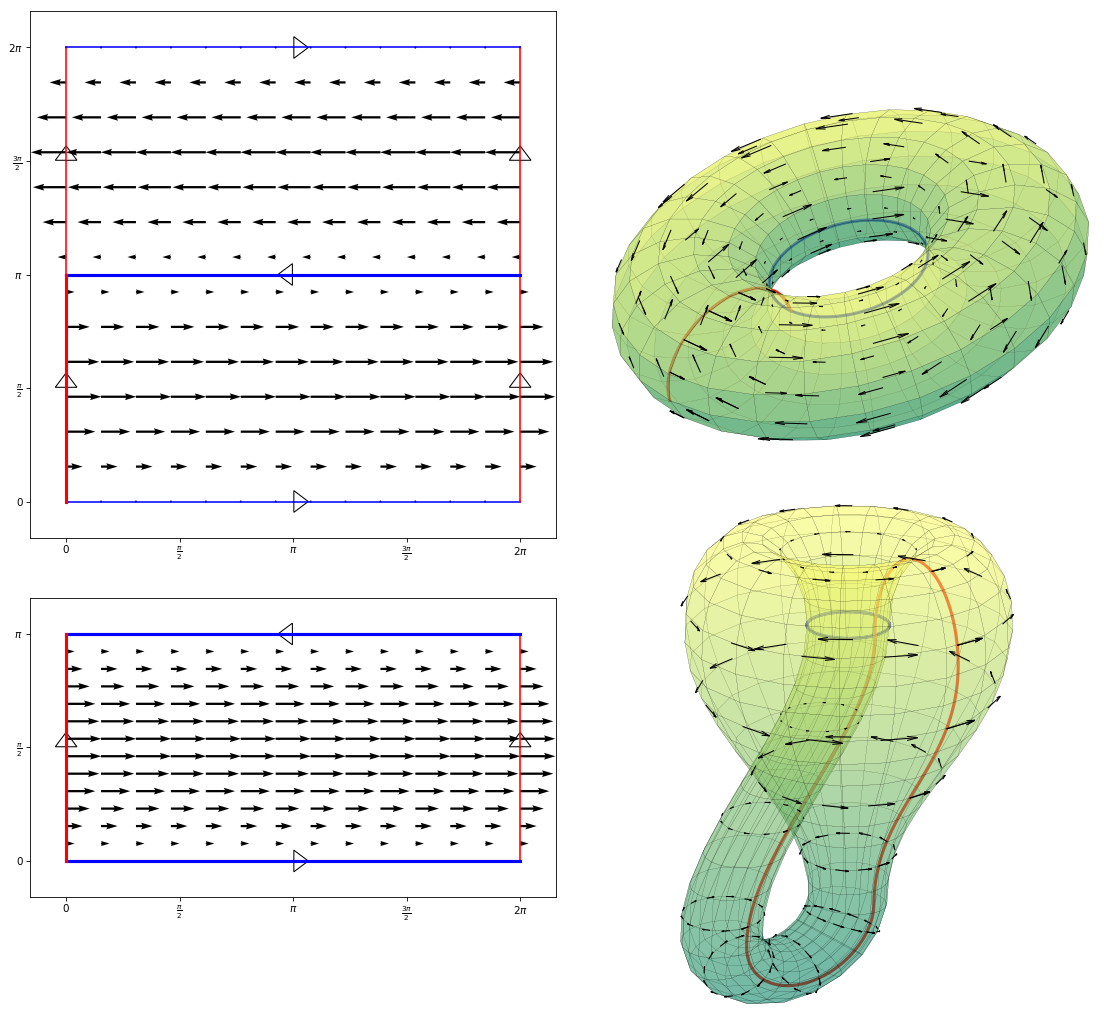}
\caption{ \tiny A vector field $v = \sin(x_2)\p_{x_1} \leftrightarrow \xi_{(0,1)} =-\cos(x_2)  $ on the torus in the subalgebra $\SoVect_I(\mb T)$, i.e. where $I_*v = v\circ I$. Thus this gives a vector field on the Klein bottle}
\label{fig:vf}
\end{figure}


\subsection{Sectional curvatures of SDiff$(\mathbb{K})$}

We start by recalling the sectional curvatures of the group $\SoDiff(\mathbb{T})$ found in~\cite{arnold_sur_1966}. Namely, in any two dimensional planes containing elements $\zeta_k$ from the basis of Corollary \ref{cor:basis.g} they are as follows.
\begin{theorem}[\cites{arnold_sur_1966,arnold_mathematical_1989}]\label{th:sec.curv.torus}
Let $$ \zeta_k =\cos(k_1x_1 + k_2x_2)= \frac{1}{2}(e_k+e_{-k}), $$
and let $ \displaystyle \eta = \sum_{l\in \mathbb{Z}^2} b_le_l$ be a general element in $\SoVect(\mb T) \simeq \mf g^{\mb T} $ with the constraint $b_{-l} = \overline{b}_l$ and orthogonal to $\zeta_k$.
Then the sectional curvature of the group $\SoDiff(\mb T)$ in any two-dimensional plane containing the stream function $\zeta_k$ is
\begin{equation}\label{eq:sec.curv.torus}
C(\zeta_k,  \eta) \cdot \left(\sum_{l\in \mb Z^2} \norm{b_l}^2\norm{l}^2 \right)= -\frac{1}{2S_{\mb T} \norm{k}^2 } \sum_{l \in \mathbb{Z}^2} \frac{(k\times l )^4}{\norm{k+l}^2} \left|b_l + b_{2k+l}\right|^2,
\end{equation}
where $S_{\mathbb{T}}$ denote the area of the torus.
\end{theorem}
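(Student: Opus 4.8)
The plan is to compute the numerator and the denominator of \eqref{eq:sec.curv.def} directly in the complex exponential basis, feeding in the explicit data of Theorem~\ref{th:arnold.khesin}. Since $\eta$ is orthogonal to $\zeta_k$ by hypothesis, the denominator reduces to $\langle\zeta_k,\zeta_k\rangle\langle\eta,\eta\rangle$. Writing $\zeta_k=\tfrac{1}{2}(e_k+e_{-k})$ and applying items $(1)$ and $(2)$ gives $\langle\zeta_k,\zeta_k\rangle=\tfrac{1}{2}\norm{k}^2 S_{\mb T}$; bilinearity of the extended inner product together with the reality constraint $b_{-l}=\overline{b_l}$ gives $\langle\eta,\eta\rangle=S_{\mb T}\sum_{l}|b_l|^2\norm{l}^2$. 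This already isolates the factor $\sum_l\norm{b_l}^2\norm{l}^2$ that is moved to the left-hand side of \eqref{eq:sec.curv.torus}, so the claim becomes the assertion that $\langle R(\zeta_k,\eta)\zeta_k,\eta\rangle=-\tfrac{S_{\mb T}}{4}\sum_l(k\times l)^4\norm{k+l}^{-2}|b_l+b_{2k+l}|^2$.

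For the numerator I would expand $\zeta_k=\tfrac{1}{2}(e_k+e_{-k})$ and $\eta=\sum_l b_le_l$ and use the multilinearity of $R$, reducing $\langle R(\zeta_k,\eta)\zeta_k,\eta\rangle$ to a combination of tensor entries $R_{a,l,b,m}=\langle R(e_a,e_l)e_b,e_m\rangle$ with $a,b\in\{k,-k\}$. By item $(6)$ only the entries with $a+l+b+m=0$ survive, which collapses the double sum over $(l,m)$ to a single sum over $l$: the diagonal sign choices $(a,b)=(k,k)$ and $(-k,-k)$ pin $m=-2k-l$ and $m=2k-l$, while the off-diagonal choices $(k,-k)$ and $(-k,k)$ pin $m=-l$. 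Substituting into item $(7)$ and using $d_{k,k}=d_{-k,-k}=0$ (as $k\times k=0$), the diagonal entries evaluate to $R_{k,l,k,-2k-l}=-d_{l,k}d_{k,-2k-l}S_{\mb T}=-(k\times l)^4\norm{k+l}^{-2}S_{\mb T}$ and its $(-k,-k)$-counterpart, which produce the mixed products $b_lb_{-2k-l}$ and $b_lb_{2k-l}$.

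The next step is to reassemble these contributions. Using $b_{-l}=\overline{b_l}$ and $b_{-2k-l}=\overline{b_{2k+l}}$, together with the reindexing $l\mapsto-l$ that turns $\norm{k-l}$ weights into $\norm{k+l}$ weights, the two diagonal families combine into $\sum_l 2\Re(b_l\overline{b_{2k+l}})(k\times l)^4\norm{k+l}^{-2}$, i.e.\ the cross term of the target square. The off-diagonal families, which carry the pairings $b_lb_{-l}=|b_l|^2$, recombine after the analogous reindexing into the two modulus terms $|b_l|^2$ and $|b_{2k+l}|^2$ of the target square. Collecting everything yields the perfect square $|b_l+b_{2k+l}|^2$, and dividing by $\langle\zeta_k,\zeta_k\rangle\langle\eta,\eta\rangle$ produces \eqref{eq:sec.curv.torus}. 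One may equally route the computation through the short formula \eqref{eq:C.short.formula} and the operator $B$ from item $(4)$; the bookkeeping is the same.

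The main obstacle is twofold. The off-diagonal entries $R_{k,l,-k,-l}$ and $R_{-k,l,k,-l}$ formally involve the ill-defined symbols $d_{k,-k}$ and $d_{l,-l}$, because $k+(-k)=0$ lies outside the index set of $\mf g^{\mb T}$; these must be treated by observing that every intermediate connection term landing in the excluded constant mode $e_0$ carries a compensating factor $k\times k=0$ and hence genuinely vanishes, so that $R(e_{\pm k},e_l)e_{\mp k}$ has a finite, computable value. The second, purely combinatorial, difficulty is to track the four sign combinations and the index shifts so that the separate pieces recombine precisely into $|b_l+b_{2k+l}|^2$; this is exactly the content of Arnold's original calculation, which we are re-deriving in the present notation.
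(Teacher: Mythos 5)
Your proposal is correct: the expansion of $\langle R(\zeta_k,\eta)\zeta_k,\eta\rangle$ into the entries $R_{a,l,b,m}$ with $a,b\in\{k,-k\}$, the evaluation via items $(6)$--$(7)$, and the reindexings $l\mapsto -l$ and $l\mapsto -2k-l$ that assemble the cross term and the two modulus terms of $\left|b_l+b_{2k+l}\right|^2$ all check out, as does your treatment of the formally ill-defined constant-mode entries. The paper only cites this theorem from Arnold rather than proving it, but your route is exactly the standard calculation and coincides with the strategy the authors use for the Klein-bottle analogue (Theorem~\ref{th:sec.curv.klein}).
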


The following theorem gives the sectional curvature of $\SDiff(\mb K)$ for the Klein bottle, whenever  one fixes an arbitrary stream function of the form~\eqref{eq:Compbasis}.

\begin{theorem}\label{th:sec.curv.klein}
Let $\xi_k$ be of the form~\eqref{eq:Compbasis} and 
$  \eta = \sum_{l\in \mathbb{Z}^2} b_le_l\in \mf g^{\mb T}_{-1} $ as in Corollary~\ref{cor:general.element.basis.klein}. Assume that the corresponding vector fields are orthogonal in $\SVect(\mb K)$. 
Then the sectional curvature of $\SDiff(\mathbb{K})$ in any two-dimensional plane containing $\xi_k$ is
\begin{equation}\label{eq:sec.curv.klein}
\resizebox{0.92\hsize}{!}{$%
\begin{aligned}
C (\xi_k, \eta) &
\cdot   \left( |a_k|^2 \sum_{l \in \mathbb{Z}^2} |b_l|^2 \norm{l}^2  \right)
= -\frac{1}{2S_{\mathbb{T} } \norm{k}^2} \sum_{l \in \mathbb{Z}^2} \frac{(k\times l)^4  }{\norm{k+l}^2} \left|a_k  b_l + \overline{a_k} b_{2k+l}\right|^2  \\
&+(-1)^{k_2} \frac{1}{ S_{\mathbb{T} } \norm{k}^2} \sum_{l \in \mathbb{Z}^2} \frac{ (\overline{k} \times (k+l) )^2 (k\times l)^2 }{\norm{k+l}^2} \re \left( a_k ^2 b_{l}b_{ -k+\overline{k} -l }  +  |a_k|^2 b_l b_{ -k-\overline{k} -l }    \right)\\
&+ (-1)^{k_2+1} \frac{4k_1^2k_2^2}{S_{\mathbb{T} } \norm{k}^2} \sum_{l \in \mathbb{Z}^2} l_1^2\re( a_k^2b_{l} b_{ -k + \overline{k} -l } ) + l_2^2 \re( |a_k|^2 b_l b_{-k -\overline{k} -l})\,,
\end{aligned}
$}%
\end{equation}
where $\re$ stands for taking the real part.
\end{theorem}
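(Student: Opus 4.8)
The plan is to compute the sectional curvature directly from Arnold's curvature tensor on the ambient group $\SoDiff(\mb T)$, using the fact (the Corollary following Proposition~\ref{prop:isometry}) that $\SDiff(\mb K)$ is a totally geodesic submanifold of $\SoDiff(\mb T)$, so that $C(\xi_k,\eta)$ may be evaluated through formula~\eqref{eq:sec.curv.def} with the tensor $\RCT_{p,q,r,s}$ supplied by Theorem~\ref{th:arnold.khesin}. First I would dispose of the denominator: since $\xi_k\perp\eta$ by hypothesis, $\langle\xi_k,\xi_k\rangle\langle\eta,\eta\rangle-\langle\xi_k,\eta\rangle^2=\langle\xi_k,\xi_k\rangle\langle\eta,\eta\rangle$, and parts (1)--(2) of Theorem~\ref{th:arnold.khesin} together with the constraints~\eqref{eq:CompCond} and those of Corollary~\ref{cor:general.element.basis.klein} give $\langle\xi_k,\xi_k\rangle=4|a_k|^2\norm{k}^2S_{\mb T}$ and $\langle\eta,\eta\rangle=S_{\mb T}\sum_{l}|b_l|^2\norm{l}^2$. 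This accounts for the normalizing factor $|a_k|^2\sum_l|b_l|^2\norm{l}^2$ appearing on the left of~\eqref{eq:sec.curv.klein}.

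The core of the argument is the numerator $\langle\RCT(\xi_k,\eta)\xi_k,\eta\rangle$. Writing $\xi_k=\sum_{p\in\sigma(k)}a_pe_p$ with $\sigma(k)=\{k,-k,\overline{k},-\overline{k}\}$ and $\eta=\sum_q b_qe_q$, the $\mb C$-bilinearity of the inner product expands the numerator as $\sum_{p,r\in\sigma(k)}\sum_{q,s}a_pa_r b_q b_s\,\RCT_{p,q,r,s}$. I would then invoke the selection rule $\RCT_{p,q,r,s}=0$ unless $p+q+r+s=0$ (part (6)) to eliminate $s$ in favour of $s=-(p+r)-q$, and substitute the closed form $\RCT_{p,q,r,s}=(d_{qs}d_{pr}-d_{qr}d_{ps})S_{\mb T}$ of part (7), where $d_{uv}=(u\times v)^2/\norm{u+v}$. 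The surviving sum is then governed by the ``total momentum'' $p+r$ of the two $\xi_k$-modes, which fixes the index shift applied to the $\eta$-sum.

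The decisive step is to organize the sixteen pairs $(p,r)\in\sigma(k)^2$ into two families. For the \emph{pure} pairs, where $p,r$ both lie in $\{k,-k\}$ or both in $\{\overline{k},-\overline{k}\}$, the shifts are $0$ and $\pm2k$ (respectively $\pm2\overline{k}$); collecting the three shifts and using $b_{-l}=\overline{b_l}$ reassembles the Arnold torus expression, i.e.\ the first line of~\eqref{eq:sec.curv.klein}, with the combination $|a_kb_l+\overline{a_k}b_{2k+l}|^2$ replacing the real $|b_l+b_{2k+l}|^2$ of Theorem~\ref{th:sec.curv.torus}. For the \emph{cross} pairs, where one mode comes from $\{k,-k\}$ and the other from $\{\overline{k},-\overline{k}\}$, the relations~\eqref{eq:CompCond} turn each product $a_pa_r$ into $(-1)^{k_2+1}a_k^2$ or $(-1)^{k_2+1}|a_k|^2$, which is the origin of the signs $(-1)^{k_2}$, $(-1)^{k_2+1}$ and of the coefficients $a_k^2$, $|a_k|^2$ on the second and third lines. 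Here the shifts are by $\pm(k\pm\overline{k})=(\pm2k_1,0)$ or $(0,\pm2k_2)$, which explains the shifted indices $b_{-k+\overline{k}-l}$ and $b_{-k-\overline{k}-l}$, while the factor $d_{pr}$ produces $(k\times\overline{k})^2=4k_1^2k_2^2$ in the third line, and the combination $d_{qs}d_{pr}-d_{qr}d_{ps}$ disentangles into the $(\overline{k}\times(k+l))^2(k\times l)^2$ term of the second line and the $l_1^2,l_2^2$ terms of the third after elementary cross-product identities.

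I expect the main obstacle to be purely the bookkeeping: tracking the four simultaneous index shifts and the reflection $l\mapsto\overline{l}$ across all sixteen pairs, carrying the signs from~\eqref{eq:CompCond} consistently, and repeatedly pairing an index with its negative or reflection so that, via $b_{-l}=\overline{b_l}$ and $b_{\overline{l}}=(-1)^{l_2+1}\overline{b_l}$, the complex contributions collapse into the real parts $\re(a_k^2b_lb_{-k+\overline{k}-l})$ and $\re(|a_k|^2b_lb_{-k-\overline{k}-l})$ displayed in~\eqref{eq:sec.curv.klein}. Re-indexing each of the three resulting blocks (after its shift) to the single running index $l\in\mb Z^2$ is where the bulk of the routine computation lies.
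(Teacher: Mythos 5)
Your proposal is correct and matches the paper's own computation in all essentials: the paper likewise expands $\langle \RCT(\xi_k,\eta)\xi_k,\eta\rangle$ multilinearly over the four modes of $\xi_k$, invokes the selection rule and Arnold's closed form for $\RCT_{p,q,r,s}$ to reduce to sixteen index-shifted sums, and groups them exactly into your ``pure'' pairs (recovering the torus-type first line) and ``cross'' pairs (producing the $(-1)^{k_2}$-signed second and third lines with shifts $\pm(k\pm\overline{k})$ and the factor $4k_1^2k_2^2$). The remaining work is the same re-indexing and conjugation bookkeeping you identify, so no further comment is needed.
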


The proof is a direct but involved computation using expressions for sectional curvatures from Section~\ref{sec:curvature.def} and formula~\eqref{eq:Rie.curv.end.def} for the Riemannian curvature tensor. 

\begin{example}\label{ex:sec.curv.klein}
{\rm
Let us consider a pair of vectors
\[ \xi_{(1,1)} =  e_{(1,1)} + e_{(-1,-1)} +  e_{(1,-1)} + e_{(-1,1)} = 4\cos(x_1)\cos(x_2),\]
and
$$\eta_{(2,2)} = b_{(2,2)}e_{(2,2)}  +b_{(-2,-2)}e_{(-2,-2)} +b_{(2,-2)}e_{(2,-2)} +b_{(-2,2)}e_{(-2,2)}, $$
where
$$b_{(-2,-2)} = \overline{b_{(2,2)}} , \quad b_{(2,-2)} = -\overline{b_{(2,2)}}, \quad b_{(-2,2)} = - b_{(2,2)},  $$
and compute the sectional curvature $C(\xi_{(1,1)}, \eta_{(2,2)})$ using Theorem~\ref{th:sec.curv.klein}. The terms $b_l$ in the sums in~\eqref{eq:sec.curv.klein} are non-zero for
$$l\in J= \{(2,2), (-2,-2), (2,-2), (-2,2) \}. $$  
Notice that here $k = (1,1)$ and for all $l\in J$,
$$ 2k+l \notin J, \quad -k-\overline{k} - l \notin J, \quad -k+\overline{k} - l \notin J .$$
Thus for all $l \in J$,
$b_{2k+l} = b_{-k-\overline{k} - l} = b_{-k+\overline{k} - l} = 0$. This simplifies the calculation of $C(\xi_{(1,1)}, \eta_{(2,2)})$ to
\[ C(\xi_{(1,1)}, \eta_{(2,2)})  \left(  32 |b_{(2,2)}|^2  \right)= -\frac{1}{4S_{\mathbb{T} } } \sum_{l \in J \cup (J-2k)} \frac{(k\times l)^4  }{\norm{k+l}^2} \left| b_l \right|^2.  \]
This implies that  $C(\xi_{(1,1)}, \eta_{(2,2)})  \simeq -0.0203$. 

}
\end{example}

\begin{remark}\label{rem:basis.comparison}
{\rm It is worth mentioning that the basis~\eqref{eq:B} for $\mf g^{\mb T}_{-1} $ for the Klein bottle
does not ``behave" as nicely as the basis
\[ 
\big\{ 2\cos(k_1x_1 + k_2x_2) =  e_k + e_{-k}, \quad 2\sin(k_1x_1 + k_2x_2) = -i(e_k - e_{-k})  \mid k \in \mb Z^2/ \{-1,1 \} \big\} 
\]
 for the torus in Theorem~\ref{th:sec.curv.torus}. For the torus the sectional curvature turns out to be negative in any two-dimensional plane containing the direction $e_k + e_{-k}$, but it is not the case for the Klein bottle. In a nutshell, the reason  
 is that basis elements for $\mathbb{K} $ are sums of four parts (rather than two for $\mathbb{T}$),  the corresponding  curvature tensor $\RCT_{k,l,m,n}$ contains $4^4 = 256$ terms and there are more interactions between them than in the torus case, resulting in two extra sums in~\eqref{eq:sec.curv.klein} not appearing in~\eqref{eq:sec.curv.torus}.
}\end{remark}

The following corollary shows that if two stream functions of the form~\eqref{eq:Compbasis} are \textit{sufficiently different}, the sectional curvature in the plane through them is strictly negative.

\begin{corollary}\label{cor:neg.C.basis}
Let  $k_1\neq l_1$, $k_2\neq l_2$ for two stream functions $\xi_k$, $\eta_l$ as in~\eqref{eq:Compbasis}.
Then the sectional curvature of the group $\SDiff(\mathbb{K})$ in the two-dimensional direction given by $\xi_k$ and $\eta_l$ is
\begin{equation*}
C(\xi_k, \eta_l) = -\frac{1}{4 S_\mathbb{T} \norm{k}^2 \norm{l}^2 }\left(\frac{(k\times l)^4}{ \norm{k+l}^2 } + \frac{(k\times l)^4}{ \norm{k-l}^2 } + \frac{(k\times \overline{l} )^4}{ \norm{k+\overline{l}}^2 }  + \frac{(k\times \overline{l})^4}{ \norm{k-\overline{l}}^2 }  \right) ,
\end{equation*}
and, in particular, it is strictly negative and bounded as follows:
\[ -\frac{ \min\{\norm{k}^2, \norm{l}^2\} }{S_{\mb T}} \leq C(\xi_k, \eta_l)  <0 . \]
\end{corollary}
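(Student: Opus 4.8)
The plan is to substitute the four–term stream function $\eta_l$ into the general curvature formula \eqref{eq:sec.curv.klein} and to track precisely which Fourier modes survive. First I would record the data for $\eta_l$: by \eqref{eq:Compbasis}--\eqref{eq:CompCond} the coefficient $b_j$ is nonzero exactly for $j$ in the support $S=\{l,-l,\overline{l},-\overline{l}\}$, and there $|b_j|=|b_l|$ while $\norm{j}=\norm{l}$. Hence the normalization factor on the left of \eqref{eq:sec.curv.klein} is
\[
|a_k|^2\sum_{j\in\mb Z^2}|b_j|^2\norm{j}^2 = 4\,|a_k|^2|b_l|^2\norm{l}^2 .
\]

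The heart of the argument is that, under $k_1\neq l_1$ and $k_2\neq l_2$, no shifted index occurring in \eqref{eq:sec.curv.klein} can return to $S$, so the second and third sums disappear. A surviving term there needs \emph{both} $b_m\neq 0$ and $b_{-k\pm\overline{k}-m}\neq 0$, i.e. $m\in S$ and $(-k\pm\overline{k})-m\in S$; this forces $-k+\overline{k}=(0,-2k_2)$ or $-k-\overline{k}=(-2k_1,0)$ to be a sum of two elements of $S$. Running through the finitely many such pairwise sums, the only possibilities are $k_2=l_2$ or $k_1=l_1$, both excluded. Thus the whole contribution comes from the first sum.

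In the first sum the cross term $b_{2k+m}$ forces $2k=s'-s$ to be a \emph{difference} of two elements of $S$; the same case check shows this again needs $k_1=l_1$ or $k_2=l_2$, so $b_m$ and $b_{2k+m}$ are never simultaneously nonzero and the cross term drops. The easy-to-miss point, already visible in Example~\ref{ex:sec.curv.klein}, is that the first sum still receives contributions from $m\in S-2k$, where $b_m=0$ but $b_{2k+m}\neq 0$. Writing $m=s-2k$ with $s\in S$ and using $k\times(s-2k)=k\times s$ together with $\norm{k+(s-2k)}=\norm{k-s}$, the $S-2k$ block reproduces the $S$ block exactly, doubling it. Collecting the four modes $l,-l,\overline{l},-\overline{l}$, using $(k\times(-j))^4=(k\times j)^4$, and dividing by the normalization factor then yields the stated closed formula.

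Finally, for the sign and the bound I would invoke the Cauchy--Schwarz inequality for the cross product together with the identities $(k\times l)^2=(k\times(k\pm l))^2=(l\times(k\pm l))^2$, which give $(k\times l)^2/\norm{k\pm l}^2\le\min\{\norm{k}^2,\norm{l}^2\}$ and likewise with $\overline{l}$ in place of $l$ (since $\norm{\overline{l}}=\norm{l}$). Each of the four summands is then at most $\norm{k}^2\norm{l}^2\min\{\norm{k}^2,\norm{l}^2\}$, and summing and dividing by $4\norm{k}^2\norm{l}^2$ gives $-\min\{\norm{k}^2,\norm{l}^2\}/S_{\mb T}\le C(\xi_k,\eta_l)$. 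Strict negativity follows because $(k\times l)$ and $(k\times\overline{l})$ cannot both vanish: that would force $k_1l_2=k_2l_1=0$, impossible once all components are positive. I expect the main obstacle to lie not in the algebra or the estimates but in the combinatorial bookkeeping of the support—in particular, in not forgetting the $S-2k$ contributions, whose omission would change the answer by a factor of two—and in dispatching the degenerate cases (a vanishing component of $k$ or $l$, where the four modes $\pm k,\pm\overline{k}$ collapse) by a separate, direct computation.
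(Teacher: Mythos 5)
Your proposal is correct and follows essentially the same route as the paper's own proof: specialize formula~\eqref{eq:sec.curv.klein} to the four-mode support of $\eta_l$, check that under $k_1\neq l_1$, $k_2\neq l_2$ the last two sums and the cross terms of the first sum vanish, keep both the $S$ and $S-2k$ contributions of the first sum, and divide by the normalization. You in fact supply details the paper leaves implicit (the support bookkeeping for why the products $b_jb_{-k\pm\overline{k}-j}$ vanish, the doubling coming from $j\in S-2k$, and the Cauchy--Schwarz argument for the lower bound, which the paper states without proof); the only loose end is the one you already flag, namely the degenerate indices with a vanishing component, where $-k\pm\overline{k}$ can equal $0$, the four modes collapse, and \eqref{eq:sec.curv.klein} with its normalization must be re-derived separately.
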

\begin{proof}
Having fixed index $l$ for $\eta_l$,  we use $j$ as the summing index in~\eqref{eq:sec.curv.klein}. The two last sums in~\eqref{eq:sec.curv.klein} vanish as all products $b_jb_{-k+\overline{k} -j}$ and $b_j b_{-k-\overline{k} - j}$ are zero. In the first sum, $b_j$ is non-zero when $j\in \{l, -l, \overline{l}, -\overline{l}\},$ and similarly with $b_{2k+j}$. 
Employing the normalization from the l.h.s. of~\eqref{eq:sec.curv.klein}, we get the desired result.
\end{proof}

Recall that by Theorem~\ref{th:sec.curv.torus} the sectional curvatures of $\SoVect(\mb T)$ are always negative in planes containing  $\zeta_k$ from the basis of Corollary~\ref{cor:basis.g}. However, the following theorem shows that in $\SVect(\mb K)$  in certain two-dimensional directions containing a stream function $\xi_k$, which is an analogue of $\zeta_k$ for $\mathbb T$, the sectional curvature is positive.

\begin{theorem}\label{th:pos.sequence}
Let $\xi_k=  a_ke_k + a_{-k} e_{-k} + a_{\overline{k}} e_{\overline{k}} + a_{-\overline{k}} e_{-\overline{k}},$ with $a_k\in \mb R$ or $i \mb R$, as in~\eqref{eq:Compbasis}.
Then there exists a direction $\eta \in \SVect(\mb K)$ such that $C(\xi_k, \eta) >0$. 

Namely, there exists a sequence of stream functions $\eta_{l_m}\in \SVect(\mb K)$ 
such that $\{C(\xi_k, \eta_{l_m})\}_{m\in \mathbb{N}} $ is a monotonically increasing sequence, converging to
\[ \lim\limits_{m\to \infty} C(\xi_k, \eta_{l_m}) = \begin{cases}
\displaystyle\frac{k_2^2}{2\norm{k}^2 S_{\mb T}} (4k_1^2 -3 k_2^2)  ,  & \text{if } k_1\geq k_2   \\[7pt]
\displaystyle  \frac{k_1^2}{2\norm{k}^2 S_{\mb T}} (4k_2^2 -3 k_1^2) , & \text{if } k_1\leq k_2 .
\end{cases}\]

\end{theorem}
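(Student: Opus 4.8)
The plan is to read off the sign of the sectional curvature directly from formula~\eqref{eq:sec.curv.klein}: its first sum is manifestly non-positive, whereas the second and third sums may contribute with either sign, so the task is to choose a test direction $\eta$ making the latter dominate. Since $a_k\in\mathbb R$ or $i\mathbb R$ we have $a_k^2=\pm|a_k|^2$, which will fix the relative signs of the $a_k^2$- and $|a_k|^2$-terms. The key structural observation is that the two shifts entering the products $b_l b_{-k+\overline k-l}$ and $b_l b_{-k-\overline k-l}$ are $-k+\overline k=(0,-2k_2)$ and $-k-\overline k=(-2k_1,0)$; such a product is non-zero only when both of its indices lie in the support of $\eta$, so the geometry of the chosen support decides which positive terms survive.

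First I would treat the case $k_1\ge k_2$ and take $\eta_{l_m}$ to be the real stream function of the form~\eqref{eq:Compbasis} supported on the four-fold orbit of $l_m=(m,k_2)$, namely $\{(m,k_2),(-m,-k_2),(m,-k_2),(-m,k_2)\}$, its coefficients fixed by~\eqref{eq:CompCond} up to a single scalar, with $m\to\infty$. A direct check shows that the shift $(0,-2k_2)$ preserves this orbit (for instance $(0,-2k_2)-(m,-k_2)=(-m,-k_2)$), so the products $b_l b_{-k+\overline k-l}$ survive, while the shift $(-2k_1,0)$ does not preserve it for large $m$, so every product $b_l b_{-k-\overline k-l}$ vanishes. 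Hence all second-type terms drop out, and in the third sum only the term carrying the factor $l_1^2=m^2$ remains; this is the source of the leading $4k_1^2k_2^2$ in the stated limit. The case $k_1\le k_2$ is symmetric: one takes $l_m=(k_1,m)$, whose orbit self-links under $(-2k_1,0)$ instead, so the second-type products survive, the $l_2^2=m^2$ term of the third sum dominates, and the companion limit with $k_1$ and $k_2$ interchanged results.

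With the support fixed, substituting $\eta_{l_m}$ into~\eqref{eq:sec.curv.klein} reduces each of the three sums to a finite sum over the orbit (augmented by the Arnold-type shift $2k+l$ in the first sum, which here always leaves the orbit, so that $b_{2k+l}=0$). Using~\eqref{eq:CompCond} together with $a_k^2=\pm|a_k|^2$ to render the surviving coefficient products real, and dividing by the normalisation $|a_k|^2\sum_l|b_l|^2\norm l^2\sim 4|a_k|^2|b|^2 m^2$, one obtains $C(\xi_k,\eta_{l_m})$ as an explicit rational function of $m$. All three sums turn out to be of order $m^2$: the positive contribution of the third sum, proportional to $4k_1^2k_2^2$, competes with the negative contributions of the first and second sums, each proportional to $k_2^4$, and letting $m\to\infty$ their leading coefficients assemble into $k_2^2(4k_1^2-3k_2^2)/(2\norm k^2 S_{\mathbb T})$, respectively its $k_1\leftrightarrow k_2$ counterpart. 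Positivity of the limit is then immediate, since $k_1\ge k_2$ forces $4k_1^2-3k_2^2\ge k_2^2>0$, so $C(\xi_k,\eta_{l_m})>0$ for all large $m$ and the required direction $\eta$ exists.

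The main obstacle is the exact, rather than merely asymptotic, evaluation demanded by the monotonicity claim. Showing that $\{C(\xi_k,\eta_{l_m})\}_m$ is monotonically increasing, and not just convergent, requires the closed-form finite-$m$ expression, whose assembly means tracking every surviving index pair across the three sums of~\eqref{eq:sec.curv.klein}, the global sign $(-1)^{k_2}$, the constraint signs $(-1)^{l_2+1}$ from~\eqref{eq:CompCond}, and the $a_k^2$ versus $|a_k|^2$ distinction. Once the curvature is written as a single rational function of $m$ its monotonicity and limit can be read off, but ensuring that no cross term is missed and that every sign is correct is the delicate part.
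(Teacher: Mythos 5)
Your construction coincides with the paper's: for $k_1\ge k_2$ the paper takes exactly the sequence $\eta_{l_m}$ supported on the orbit of $(m,k_2)$, observes that $b_{2k+l}$ and $b_{-k-\overline{k}-l}$ vanish for large $m$ while the products $b_l b_{-k+\overline{k}-l}$ survive, and extracts the same three leading $m^2$-coefficients ($-4|a_k|^2k_2^4$ from the first sum, $-2a_k^2b^2k_2^4$ from the second, $+8a_k^2b^2k_1^2k_2^2$ from the third) before dividing by $4|a_k|^2m^2$. So the route is the same; but one point you leave implicit is essential and is exactly where the hypothesis $a_k\in\mathbb R$ or $i\mathbb R$ is used. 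The free scalar $b=b_{(m,k_2)}$ must be chosen so that $a_k^2b^2>0$, i.e.\ $b=1$ when $a_k\in\mathbb R$ and $b=-i$ when $a_k\in i\mathbb R$. ``Rendering the surviving products real'' is not enough: both choices make them real, but with the mismatched one $a_k^2b^2=-|a_k|^2$, the bracket becomes $-4k_1^2+k_2^2-2k_2^2=-(4k_1^2+k_2^2)<0$, and the limit is strictly negative, so the argument fails for half of the admissible test directions. A smaller bookkeeping caveat of the same kind appears in your $k_1\le k_2$ case: for $l_m=(k_1,m)$ the surviving product $b_lb_{-k-\overline{k}-l}$ carries a factor $(-1)^{m+1}$ from~\eqref{eq:CompCond}, so the sign of the dominant term depends on the parity of $m$ and one must restrict to $m\equiv k_2\pmod 2$ (the paper sidesteps this by writing out only the first case).

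On your ``main obstacle'': you are right that the asymptotic evaluation alone does not prove the claimed monotonicity of $\{C(\xi_k,\eta_{l_m})\}_m$. Note, however, that the paper's own proof also only computes the limit and concludes positivity for $m$ large enough; the monotonicity assertion is not established there either, so on that point your proposal is no weaker than the source.
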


\begin{proof}
Given $k$ with $k_1 \geq k_2$  we set
$$  \eta_{l_m}  =b_{(m, k_2)} e_{(m, k_2)} + b_{(-m, -k_2)} e_{(-m, -k_2)}+ b_{(m, -k_2)}e_{(m, -k_2)} + b_{(-m, k_2)}e_{(-m, k_2)}  \in \mathcal{B},$$ for $m\neq k_1$. We will later specify $b_{(m,k_2)} $ to be $ 1$ or $-i$, but either way we will choose $|b_l| = 1$.
We now compute $C(\xi_k, \eta_{l_m})$ by using Theorem~\ref{th:sec.curv.klein}.

The normalization constant on the left-hand side of~\eqref{eq:sec.curv.klein} equals
$4|a_k|^2 \norm{l}^2=4|a_k|^2(m^2 + k_2^2).$
By expanding the first sum in~\eqref{eq:sec.curv.klein} in terms of $m =  j_1$, we obtain
\[  -\frac{|a_k|^2}{2S_{\mb T} \norm{k}^2} \sum_{j\in \mb Z^2 } \frac{(k_1k_2 - j_1k_2)^4}{(k_1 + j_1)^2 + (k_2 + j_2)^2}  =  -4|a_k|^2\frac{k_2^4}{S_{\mb T} \norm{k}^2} m^2 + O\left(\frac{1}{m}\right),\]
as $m\to\infty$.
The expansion of the second sum in~\eqref{eq:sec.curv.klein} in terms of $m =  j_1$ gives
\[  (-1)^{k_2} \frac{1}{ S_{\mathbb{T} } \norm{k}^2} \sum_{j\in \mb Z^2 }\frac{ ( k_1k_2 + j_1k_2  )^2 (k_1k_2 - j_1k_2)^2 }{(k_1 + j_1)^2 } \re \left( a_k ^2 (-1)^{k_2+1} b_{j}^2  \right)   \]
\[ = -2a_k^2b_l^2 \frac{k_2^4}{ S_{\mathbb{T} } \norm{k}^2} m^2 + O\left(\frac{1}{m}\right).  \]

The third sum in~\eqref{eq:sec.curv.klein} simplifies to 
\[   (-1)^{k_2+1} \frac{4k_1^2k_2^2}{S_{\mathbb{T} } \norm{k}^2} \sum_{j\in \mb Z^2 } j_1^2\re( a_k^2 (-1)^{k_2+1} b_{j}^2 )   =  8a_k^2b_l^2 \frac{ k_1^2k_2^2 }{S_{\mathbb{T} } \norm{k}^2} m^2 .\]
Combining these  terms and dividing by the normalization constant, we obtain
\[ C(\xi_k, \eta_{l_m}) = \frac{k_2^2}{2|a_k|^2\norm{k}^2  S_{\mb T}}  \left(4a_k^2b_l^2k_1^2  -a_k^2b_l^2k_2^2 - 2|a_k|^2k_2^2\right)  + O\left(\frac{1}{m}\right) . \]
Choose $b_l$ such that $a_k^2b_l^2 >0$, i.e. set $b_l = 1$ for $a_k\in \mb R$ and set $b_l = -i$ for $a_k \in i\mb R$. Then
\[ C(\xi_k , \eta_{l_m}) = \frac{k_2^2}{2\norm{k}^2  S_{\mb T}}   \left(4k_1^2 - 3k_2^2\right)  + O\left(\frac{1}{m}\right) , \]
which is positive for large $m$, since $k_1\geq k_2$. 

Finally, a vector $\eta$ satisfying $C(\xi_k, \eta) > 0$ can be chosen as $\eta=\eta_{l_m}$ for $m$ large enough, so that $C(\xi_k, \eta_{l_m}) > 0$. 
\end{proof}

\begin{example}\label{ex:PosNeg}
{\rm Table~\ref{tab:pos.seq} illustrates Theorem~\ref{th:pos.sequence} for
\begin{equation}\label{eq:xi(20,10).ex}
\xi_{(20, 10)} =e_{(20,10)} + e_{(-20, -10)} - e_{(20,-10)} - e_{(-20, 10)}.
\end{equation}
The proof of Theorem~\ref{th:pos.sequence} suggests that for
$$ \eta_{(m,10)} = e_{(m,10)}+ e_{(-m,-10)} -  e_{(m,-10)} - e_{(-m,10)} ,$$
with $m\in \mb N$  the sequence $C(\xi_{(20,10)} , \eta_{(m, 10)})$ converges to $\frac{65}{2\pi^2} \simeq 3.2929$.

\begin{small}
\begin{table}
\centering
\begin{tabular}{|c|c|c|c|c|c|c|c|c|c|c|c|c|c|}
\hline
$m$ & 0 & 5 & 10 & 15 & 20 & 25  & 35  & 45 & 100 & 300 & 500  \\\hline
$C(\xi_{(20,10)}, \eta_{(m,10)})$ & 0 & -1.6 & -0.3 & 0.5 & 0 & 1.1  & 1.7  & 2.2 & 3.1 & 3.2 & 3.2843  \\\hline
\end{tabular}
\bigskip
\caption{ \tiny The table shows the sectional curvatures of $\SDiff(\mathbb{K})$ in the two dimensional planes spanned by $\xi_{(20,10)} $ and $\eta_{(m, 10)}$ for selected values of $m\in \mb N$.  The sequence of stream functions $\eta_{(m,10)}$ is given by Theorem~\ref{th:pos.sequence}. As $m\to\infty $,  $C(\xi_{(20,10)}, \eta_{(m, 10)})$ will converge to $\frac{65}{2\pi^2} \simeq 3.2929$. The graph of the sequence is shown in Figure~\ref{fig:blanket}. }
\label{tab:pos.seq}
\end{table}
\end{small}

Note that the sequence $\eta_{(m, 10)}$ contains all stream functions $\eta_l $ of type~\eqref{eq:Compbasis} such that\\
${C(\xi_{(20,10)}, \eta_l )>0}$ since the elements $\eta_l$ with $l = (l_1, l_2)$,  $l_1 \neq k_1=20$ and $l_2 \neq k_2=10$ satisfy the hypothesis of Corollary~\ref{cor:neg.C.basis}. Candidates $\eta_l$ for which $C(\xi_{(20,10)}, \eta_l)$ is positive are on the lines $l_1 = k_1= 20$ or $l_2 = k_2 = 10$. The construction in Theorem~\ref{th:pos.sequence} implies that if the coefficients of $\eta_l$ are imaginary, then ${C(\xi_{(20,10)}, \eta_l )<0}$, and if $l_1 = k_1 = 20$, then $C(\xi_{(20,10)}, \eta_{(20, l_2)}) \leq 0$.

\begin{figure}
\centering
\includegraphics[width=0.6\textwidth]{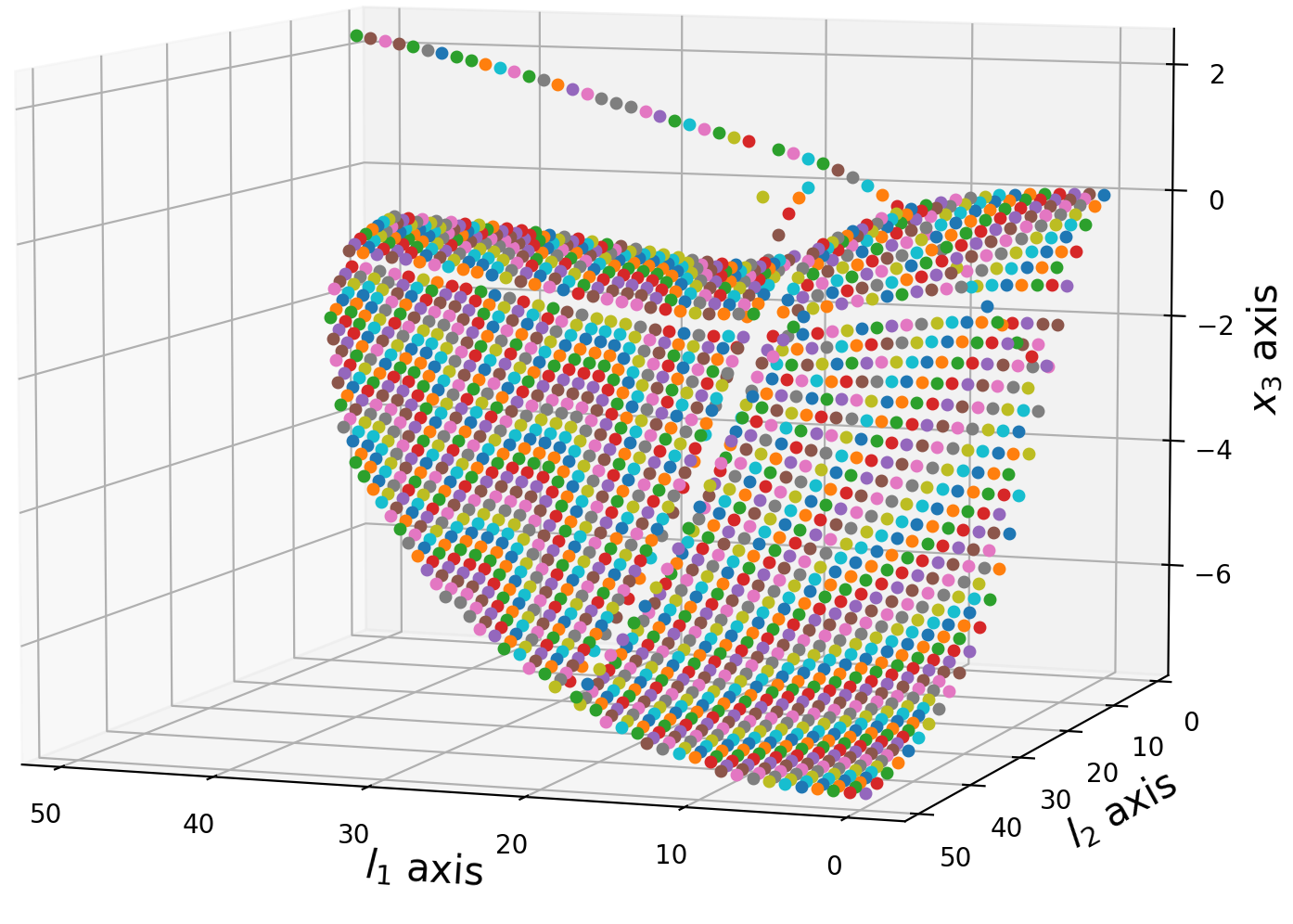}
\caption{ \tiny The value of $C(\xi_{(20,10)}, \eta_l)$ along the $x_3$-axis for $\eta_l$ with various $l = (l_1, l_2)\in J^{\Im}$ satisfying $\norm{l}\leq 50$.
On the ``cross" passing through $l_1 = 20$, $l_2 = 10$,  the hypothesis of Corollary~\ref{cor:neg.C.basis} is not fulfilled. The sequence $\eta_{(m, 10)}$ from Table~\ref{tab:pos.seq} is going along $l_2  = 10$. (The colours are to distinguish individual points.)}
\label{fig:blanket}
\end{figure}

Figure~\ref{fig:blanket} shows the graph of $C(\xi_{(20,10)},  \eta_l )$ as a function of $l = (l_1, l_2) $, where the sequence $\eta_{(m,10)}$ is clearly visible. The points on the graph of $C(\xi_{(20,10)}, \eta_l)$ happen to lay on a smooth surface, with the exception of the elements $\eta_l $ with $l_1 = k_1 $ or $l_2 = k_2$. This observation is made rigorous in the next section where the asymptotic value of $C(\xi_k, \eta_l)$ is calculated as $\norm{l} \to \infty$.
}\end{example}

\subsection{Asymptotics of sectional curvatures for SDiff$(\mathbb{K})$}
In order to compute the normalized Ricci curvature in $\SoDiff_I(\mb T)$, we first describe the asymptotic behaviour of  sectional curvatures as the eigenvalues of the Laplace-Beltrami operator tends to infinity, see Figure~\ref{fig:blanket}.
%
%
%

\begin{theorem}\label{th:sec.curv.large.l}
Fix a stream function $\xi_k$ as in~\eqref{eq:Compbasis} and let $ \displaystyle \kappa \coloneq  {\rm arctan}\left(\frac{k_2}{k_1}\right).$  Consider a sequence of stream functions $\eta_{l_m}$  where $(l_m)_1 \neq k_1$, $(l_m)_2 \neq k_2$, and
\[\norm{l_m} \stackrel{m\to \infty}{\longrightarrow} \infty, \quad \frac{l_m}{\norm{l_m}} \longrightarrow l, \quad \lambda  =  {\rm arctan}\left(\frac{l_2}{l_1}\right). \]
$$
\text{Then }\quad \lim\limits_{m\to \infty} C(\xi_k, \eta_{l_m})
= -\frac{\norm{k}^2}{2S_{\mb T}} \left(\sin^4(\lambda - \kappa) +\sin^4(\lambda + \kappa) \right)\,.
$$
\end{theorem}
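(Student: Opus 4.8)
The plan is to reduce everything to the closed form already established in Corollary~\ref{cor:neg.C.basis} and then read off its leading-order behaviour as $\norm{l_m}\to\infty$. The hypotheses $(l_m)_1\neq k_1$ and $(l_m)_2\neq k_2$ are exactly those of Corollary~\ref{cor:neg.C.basis}, so no further curvature machinery is needed: for every $m$ one has
\[
C(\xi_k,\eta_{l_m})
=-\frac{1}{4S_{\mb T}\,\norm{k}^2\,\norm{l_m}^2}
\left(\frac{(k\times l_m)^4}{\norm{k+l_m}^2}+\frac{(k\times l_m)^4}{\norm{k-l_m}^2}
+\frac{(k\times\overline{l_m})^4}{\norm{k+\overline{l_m}}^2}+\frac{(k\times\overline{l_m})^4}{\norm{k-\overline{l_m}}^2}\right).
\]
The whole statement thus becomes an asymptotic computation on this single rational expression.

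The main step is to parametrize by angles and track each piece. Writing $k=\norm{k}(\cos\kappa,\sin\kappa)$ with $\kappa=\arctan(k_2/k_1)$, setting $r_m\coloneq\norm{l_m}$ and using $l_m/r_m\to(\cos\lambda,\sin\lambda)$, I compute the cross products via $k\times v=k_1v_2-k_2v_1$. This gives
\[
k\times l_m = r_m\norm{k}\sin(\lambda-\kappa)+o(r_m),\qquad
k\times\overline{l_m} = -r_m\norm{k}\sin(\lambda+\kappa)+o(r_m),
\]
the second identity using $\overline{l_m}=((l_m)_1,-(l_m)_2)$ and the addition formula; the signs are irrelevant after raising to the fourth power. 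For the denominators, since $k\cdot l_m=O(r_m)$ one has $\norm{k\pm l_m}^2=r_m^2(1+o(1))$ and likewise $\norm{k\pm\overline{l_m}}^2=r_m^2(1+o(1))$. Substituting, each of the four summands equals $r_m^2\norm{k}^4\sin^4(\lambda\mp\kappa)(1+o(1))$, the first two carrying $\sin^4(\lambda-\kappa)$ and the last two $\sin^4(\lambda+\kappa)$.

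Plugging these into the displayed formula, the factor $\norm{l_m}^2=r_m^2$ in the denominator cancels the $r_m^2$ in the numerators, leaving
\[
C(\xi_k,\eta_{l_m})
= -\frac{2\norm{k}^4}{4S_{\mb T}\norm{k}^2}\big(\sin^4(\lambda-\kappa)+\sin^4(\lambda+\kappa)\big)+o(1)
= -\frac{\norm{k}^2}{2S_{\mb T}}\big(\sin^4(\lambda-\kappa)+\sin^4(\lambda+\kappa)\big)+o(1),
\]
and letting $m\to\infty$ yields the claimed limit. The computation is elementary; the only care required is to confirm that the error terms are genuinely of lower order, which follows uniformly from $k\cdot l_m=O(r_m)$, and to cover the degenerate directions (e.g. $l_1=0$, where $\lambda=\pi/2$ is read as a limit): there the invariant identity $k\times v=\norm{k}\norm{v}\sin(\angle(k,v))$ still holds, so the trigonometric form persists. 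The only mild obstacle is organizing the four terms and the two sign conventions ($l$ versus $\overline{l}$, and $\pm$ in the denominators) without bookkeeping errors.
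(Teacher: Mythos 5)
Your proposal is correct and follows essentially the same route as the paper: both reduce the computation to the closed form of Corollary~\ref{cor:neg.C.basis} (valid here since $(l_m)_1\neq k_1$, $(l_m)_2\neq k_2$) and then extract the leading asymptotics of the four rational terms, identifying $k\times l_m$ and $k\times\overline{l_m}$ with $\norm{k}\norm{l_m}\sin(\lambda\mp\kappa)$ up to lower order and noting $\norm{k\pm j_m}^2=\norm{l_m}^2(1+o(1))$. The only cosmetic difference is that the paper phrases the denominator estimate via the identity $k\times j_m=\norm{k}\norm{k+j_m}\sin(\theta^k_{k+j_m})$ and an angle diagram, whereas you estimate it directly; the content is identical.
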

\begin{proof}
Under the hypothesis of the theorem one can apply Corollary~\ref{cor:neg.C.basis} to obtain
\[ C (\xi_k, \eta_{l_m} ) = -\frac{1}{4S_{\mb T} \norm{k}^2 \norm{l_m}^2} \sum_{j_m \in J_m} \frac{(k\times j_m )^4}{\norm{k+j_m}^2}, \quad \text{for } j_m\in \{l_m, \ -l_m, \ \overline{l_m}, \ -\overline{l_m} \} = J_m  .  \]
We have that
\begin{equation}\label{eq:cross.prod.formula}
\begin{gathered}
k\times j_m= \norm{k}\norm{j_m}\sin(\theta^k_{j_m})\,,\\
k\times j_m= k\times (k+j_m)  = \norm{k}\norm{k+j_m}\sin(\theta^k_{k+j_m})\,,
\end{gathered}
\end{equation}
where $\theta^k_{j_m}$ is the angle between the vectors  $k$ and $j_m$, while 
$\theta^k_{k+j_m}$  is the angle between $k$ and $k+j_m$.
Substituting~\eqref{eq:cross.prod.formula} into the formula for $C (\xi_k, \eta_{l_m} ) $ we obtain
\[
C (\xi_k, \eta_{l_m} ) = -\frac{\norm{k}^2}{4S_{\mb T}} \sum_{j_m \in J_m}  \sin^2(\theta^k_{k+j_m})\sin^2(\theta^k_{j_m}).
\]
It is clear that
\[ \lim\limits_{m\to \infty }  \sin\left(\theta^k_{k+j_m}\right) = \lim\limits_{m\to \infty } \sin\left(\theta^k_{j_m}\right), \]
since $\lim\limits_{m\to \infty} \frac{k}{\norm{j_m}} = (0,0)$, as $\norm{j_m}\longrightarrow \infty$ by hypothesis.
Putting this together with the expression for $C(\xi_k, \eta_{l_m})$, we obtain
\begin{equation}\label{eq:limit.sectional.curvature.intermediate.step}
\lim\limits_{m\to \infty  } C(\xi_k, \eta_{l_m}) = \lim\limits_{m\to \infty } -\frac{\norm{k}^2}{4S_{\mb T}} \sum_{j_m \in J_m} \sin^4(\theta^k_{j_m}) = -\frac{\norm{k}^2}{4S_{\mb T}} \sum_{l \in J} \sin^4(\theta^k_{l}).
\end{equation}

\begin{figure}[ht]
\centering
\includegraphics[width=0.37\textwidth]{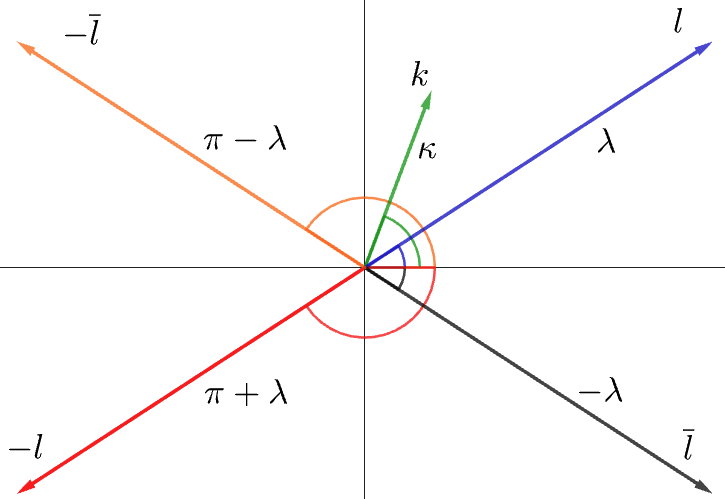}
\caption{ \tiny The vectors $k, \ l, \ -l, \ \overline{l}, \ -\overline{l}$ and corresponding angles $\kappa, \ \lambda, \  {\pi + \lambda} ,  \ -\lambda, \ \pi -\lambda $ between them and the positive $x$-axis.}
\label{fig:angles}
\end{figure}

Now express $\theta^k_{l}$ in terms of the angle $\kappa$ between $k$ and the positive $x$-axis, and the angle $\lambda$ between $l$ and the positive $x$-axis. As one can see in Figure~\ref{fig:angles},
\begin{equation*}
\theta^k_{l} = \lambda - \kappa, \quad \theta^k_{-l} = \pi + \lambda  - \kappa  \quad \theta^k_{\overline{l}} = -\lambda - \kappa, \quad \theta^k_{-\overline{l} } = \pi - \lambda - \kappa,
\end{equation*}
and thus
\begin{align*}
\sin^4(\theta^k_{l} ) &= \sin^4(\lambda - \kappa  ), & \sin^4(\theta^k_{-l}) &=  \sin^4(\lambda - \kappa), \\ 
\sin^4(\theta^k_{\overline{l}}) &= \sin^4(\lambda + \kappa),  &  \sin^4(\theta^k_{-\overline{l}}) &= \sin^4(\lambda + \kappa).
\end{align*}
Combine these with~\eqref{eq:limit.sectional.curvature.intermediate.step} to obtain
\[ \lim\limits_{m\to \infty  } C(\xi_k, \eta_{l_m}) =  -\frac{\norm{k}^2}{2S_{\mb T}} \left( \sin^4(\lambda - \kappa) + \sin^4(\lambda + \kappa) \right). \]
\end{proof}


\subsection{Normalized Ricci curvature of SDiff$(\mathbb{K})$}\label{sec:ricci.klein}

We now give a definition of the normalized Ricci curvature of $\SoDiff(M)$ for a compact Riemannian surface $M$. It is an infinite-dimensional analogue of the normalized Ricci curvature for a finite-dimensional manifold, see e.g.~\cite{arnold_topological_2021}.
\begin{definition}\label{def:norm.ric.curv}
Let $(M, g)$ be a compact Riemannian manifold without boundary and
$$\Delta^g  = \frac{1}{\sqrt{|g|}} \p_i \left(\sqrt{|g|}g^{ij} \p_j\right)$$
the Laplace-Beltrami operator on $M$.  The eigenfunctions 
\[\Delta^g \mcE_i  = \lambda_i \mcE_i \] 
can be considered as a basis for $\SoVect(M)$.  Define a (partial) order on this basis by descending eigenvalues, $\lambda_i \geq \lambda _{i+1}$.
The normalized Ricci curvature of $\SoDiff(M)$ in the direction $\xi$ is
\[ \Ric(\xi) = \lim\limits_{N\to \infty } \frac{1}{N-1}\sum_{i=1}^{N} C(\xi, \mcE_i)  \in \mathbb{R}\cup \{\pm\infty\} .\]
\end{definition}

When computing the normalized Ricci curvature we consider a subsequence of
\begin{equation}\label{eq:SN}
S_N (\xi) = \frac{1}{N-1} \sum_{i=1}^{N} C(\xi, \mcE_i)   \,,
\end{equation}  
which only employs the first $N$ elements $\mcE_i $ of the basis with $\sqrt{|\lambda_i|}  \leq R$.  

\medskip

Let now $M$ be the  torus or Klein bottle, and we consider the basis $ \mathcal{B}$ and $\xi\coloneq\xi_k\in  \mathcal{B}$.
We need to express $N$ as a function of $R$.

\begin{lemma}\label{lem:basis.element.counting}
Let $  |\mathcal{B}_R | $ be the number of basis elements $\mcE^{\Re}_l, \mcE^{\Im}_l$ from the basis $\mathcal{B}$ in Theorem~\ref{th:basis.g-1} with $\norm{l}\leq R$. The sequence is given by
\[|\mathcal{B}_R |= 2\left(A(R) - \Big\lfloor \frac{R}{2} \Big\rfloor -1 \right)  -R, \]
where $A(R)$ is the number of non-negative integer solutions to the inequality
\[x^2 +y^2 \leq R^2.\]
In particular, $|\mathcal{B}_R | =  \frac{\pi}{2}R^2 + O(R)$ as $R\to\infty$.
\end{lemma}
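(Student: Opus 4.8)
The plan is to reduce the count to a lattice-point counting problem in the first quadrant. The key observation is that, although the basis $\mathcal B$ of Theorem~\ref{th:basis.g-1} is presented as four families split according to the parity of $k_2$, combining them shows that $\mathcal B$ contains exactly one element $\mcE^{\Re}_k$ for each $k\in J^{\Re}$ and exactly one element $\mcE^{\Im}_k$ for each $k\in J^{\Im}$, irrespective of parity. Hence, writing $B_R=\{k\in\mb N_0^2 : \norm{k}\le R\}$, I have
\[
|\mathcal B_R| = |J^{\Re}\cap B_R| + |J^{\Im}\cap B_R|,
\]
while by definition $A(R)=|B_R|$ is the number of non-negative integer solutions of $x^2+y^2\le R^2$.

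Next I would count the points deleted from $B_R$ when passing to $J^{\Im}$ and to $J^{\Re}$. For $J^{\Im}$ only the even $y$-axis points $(0,2n)$ are removed; those lying in $B_R$ correspond to $n=0,1,\dots,\lfloor R/2\rfloor$, i.e.\ $\lfloor R/2\rfloor+1$ points, so $|J^{\Im}\cap B_R| = A(R)-\lfloor R/2\rfloor-1$. For $J^{\Re}$ one removes both the $x$-axis points $(n,0)$ and the even $y$-axis points $(0,2n)$; by inclusion--exclusion these number $(R+1)+(\lfloor R/2\rfloor+1)-1$, where the $-1$ corrects for the origin counted twice, giving $|J^{\Re}\cap B_R| = A(R)-R-\lfloor R/2\rfloor-1$. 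Here I take $R$ to be a positive integer, so that the $R+1$ points $(0,0),\dots,(R,0)$ are exactly the $x$-axis lattice points in $B_R$; summing the two counts yields
\[
|\mathcal B_R| = 2\bigl(A(R)-\lfloor R/2\rfloor-1\bigr)-R,
\]
as claimed.

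For the asymptotics I would invoke the Gauss circle problem: the number of lattice points $(x,y)\in\mb Z^2$ with $x^2+y^2\le R^2$ equals $\pi R^2+O(R)$. Partitioning these into the four open quadrants, the four open half-axes and the origin, and using the fourfold symmetry, one finds $A(R)=\tfrac14\pi R^2+O(R)$. Substituting into the exact formula and observing that the remaining terms $-2\lfloor R/2\rfloor-2-R$ are $O(R)$ gives $|\mathcal B_R|=\tfrac{\pi}{2}R^2+O(R)$.

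The only genuinely delicate point is the boundary bookkeeping: ensuring the origin is subtracted exactly once in the inclusion--exclusion for $J^{\Re}$, and stating explicitly that $R$ is a positive integer so that $\lfloor R\rfloor=R$ and the $x$-axis count is $R+1$. Everything else reduces to the standard Gauss circle estimate and is routine.
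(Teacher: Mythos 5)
Your proposal is correct and follows essentially the same route as the paper: identify $|\mathcal B_R|$ with $|J^{\Re}_R|+|J^{\Im}_R|$, remove the axis points by inclusion--exclusion (taking care that the origin is subtracted only once), and obtain the asymptotics from the quarter-disk lattice-point count $A(R)=\tfrac{\pi}{4}R^2+O(R)$. The only cosmetic difference is that you invoke the Gauss circle estimate and quadrant symmetry where the paper intersects the box $[0,R]^2$ with the disk directly; both yield the same bound.
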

\begin{proof}
Counting the elements in the basis is the same as counting the elements in the index sets in~\eqref{eq:J} with $\norm{l} \leq R$:
\begin{equation*}
\begin{gathered}
J^{\Re}_R = \left\{ l = (l_1, l_2) \in \mathbb{N}_0^2 \mid l_1^2 +l_2^2 \leq R^2 , \, \,    l \neq (n, 0) \text{ and }  l \neq (0, 2n) \text{ for all }n\in \mathbb{N}_0 \right\},\\
J^{\Im}_R = \left\{ l = (l_1, l_2) \in \mathbb{N}_0^2 \mid  l_1^2 +l_2^2 \leq R^2 , \, \,    l \neq (0, 2n) \text{ for all }n\in \mathbb{N}_0 \right\}.
\end{gathered}
\end{equation*}
The number of points in $J^{\Re}_R$ is the number of integer solutions to $x^2 + y^2 \leq R^2$, minus $R$ points of the type $(n, 0)$ for $0\leq n \leq R $; $ \big\lfloor \frac{R}{2} \big\rfloor$ points of the type $(0, 2n)$ for $0\leq 2n \leq R$; and minus the origin. Hence $|J^{\Re}_R| = A(R) - \big\lfloor \frac{R}{2} \big\rfloor -1  -R. $
Similarly, $|J^{\Im}_R| = A(R) - \big\lfloor \frac{R}{2} \big\rfloor -1 . $\\

To prove the growth rate we note that there are $(R+1)^2$ points $(l_1, l_2) \in \mb N_0^2$ in the box $[0, R]\times [0,R] \subset \mb R^2$. $A(R)$ is defined by intersecting this box with the disk $x^2 + y^2 \leq R^2$, hence $|A(R)|$ grows as $\frac{\pi}{4} R^2 + O(R)$ and thus $|\mathcal B_R|$ grows as $\frac{\pi}{2}R^2 + O(R)$. 
\end{proof}

Finally we need a bound on the sectional curvature for the stream functions $\xi_k, \eta_l$ as in~\eqref{eq:Compbasis} that are not  in a sequence satisfying Theorem~\ref{th:sec.curv.large.l}.

\begin{lemma}\label{lem:bounded.sec.curv.klein}
Let $\xi_k, \eta_l$ be as in~\eqref{eq:Compbasis} with $\xi_k \neq \eta_l$.
Then
\[ \big|C(\xi_k, \eta_l)\big| \leq \frac{9\norm{k}^2}{2 S_{\mb T}} + \frac{2\min\{k_1^2, k_2^2\}}{S_{\mb T}}. \]

\end{lemma}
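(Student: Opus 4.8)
The plan is to bound each of the three sums in the sectional curvature formula~\eqref{eq:sec.curv.klein} separately, after dividing by the normalization constant $|a_k|^2 \sum_{l}|b_l|^2\norm{l}^2$ on the left-hand side. First I would note that for a single basis element $\eta_l$ of the form~\eqref{eq:Compbasis} the normalization simplifies: the nonzero coefficients $b_j$ occur only for $j \in \{l,-l,\overline l,-\overline l\}$, each of equal modulus $|b_l|$ and each with $\norm{j}=\norm{l}$, so $\sum_j |b_j|^2\norm{j}^2 = 4|b_l|^2\norm{l}^2$. Thus the denominator on the right is $4|a_k|^2|b_l|^2\norm{l}^2$, which will let every numerator term carry a factor $\norm{l}^{-2}$ that one can exploit.

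For the first sum I would use the identity $k\times j = \norm{k}\norm{j}\sin\theta^k_j$ from~\eqref{eq:cross.prod.formula} together with $k\times j = k\times(k+j)=\norm{k}\norm{k+j}\sin\theta^k_{k+j}$, exactly as in the proof of Theorem~\ref{th:sec.curv.large.l}. This rewrites $(k\times j)^4/\norm{k+j}^2 = \norm{k}^4\norm{j}^2\sin^2\theta^k_j\sin^2\theta^k_{k+j}$, and after dividing by the normalization each term is bounded by $\tfrac{1}{4}\cdot\tfrac{\norm{k}^2}{2S_{\mb T}}\sin^2\theta^k_j\sin^2\theta^k_{k+j}$ since $\norm{j}=\norm{l}$ cancels against the $\norm{l}^{-2}$ in the denominator. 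Summing the four sine-squared products (each $\le 1$) over $j\in\{l,-l,\overline l,-\overline l\}$, and accounting for the possible coupling to $b_{2k+j}$ through $|a_kb_j+\overline{a_k}b_{2k+j}|^2 \le 4|a_k|^2|b_j|^2$, gives a bound of the shape $c_1\norm{k}^2/S_{\mb T}$; a careful count of the at-most-eight contributing terms should yield the constant $9/2$ appearing in the statement.

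The second and third sums are the genuinely new contributions (cf. Remark~\ref{rem:basis.comparison}), and handling them is the main obstacle. Here the products $b_jb_{-k+\overline k-j}$ and $b_jb_{-k-\overline k-j}$ are nonzero only for very specific $j$ tied to the symmetry $j\mapsto \pm\overline j$; I would argue that these resonant terms force $j$ onto the lines $j_1=\pm k_1$ or $j_2=\pm k_2$, so that the geometric factors like $(\overline k\times(k+j))^2(k\times j)^2/\norm{k+j}^2$ and $k_1^2k_2^2 j_i^2$ simplify dramatically. The key estimate is that $\re(a_k^2 b_jb_{\cdots})\le |a_k|^2|b_l|^2$, and that on these resonant lines the remaining geometric weight is bounded by a constant times $\min\{k_1^2,k_2^2\}\norm{l}^2$ (the factor $\norm{l}^2$ again cancelling the denominator). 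The delicate bookkeeping will be to confirm that the third sum contributes the $2\min\{k_1^2,k_2^2\}/S_{\mb T}$ term while the second sum is absorbed into the $9\norm{k}^2/(2S_{\mb T})$ bound; I would verify the worst-case alignment $j=\pm\overline l$ directly and then apply the triangle inequality across all three sums to assemble the final bound $\tfrac{9\norm{k}^2}{2S_{\mb T}}+\tfrac{2\min\{k_1^2,k_2^2\}}{S_{\mb T}}$.
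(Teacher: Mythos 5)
Your proposal follows essentially the same route as the paper's proof: apply Theorem~\ref{th:sec.curv.klein}, normalize by $4|a_k|^2|b_l|^2\norm{l}^2$, bound the geometric weights via $(k\times j)^4/\norm{k+j}^2\le\norm{k}^4\norm{j}^2$ (the sine identity of~\eqref{eq:cross.prod.formula}), and count the finitely many resonant indices contributing to each of the three sums. The only caveat is the final bookkeeping: in the paper the first sum contributes $4\norm{k}^2/S_{\mb T}$ and the second contributes $\norm{k}^2/(2S_{\mb T})$ (together giving the $9/2$), while the third gives $2\min\{k_1^2,k_2^2\}/S_{\mb T}$ --- consistent with your overall plan, though your second paragraph momentarily attributes the full $9/2$ to the first sum alone.
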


\begin{proof}
We apply Theorem~\ref{th:sec.curv.klein} with the two elements 
\[\xi_k = a_ke_k + a_{-k} e_{-k} +a_{\overline{k}} e_{\overline{k}} +  a_{-\overline{k}}e_{-\overline{k}},\quad |a_k| = |a_{-k} | = |a_{\overline{k}}| = |a_{-\overline{k}}| \,, \]
\[\eta_l = b_le_l + b_{-l} e_{-l} +b_{\overline{l}} e_{\overline{l}} +  b_{-\overline{l}}e_{-\overline{l}}, \quad|b_l| = |b_{-l} | = |b_{\overline{l}}| = |b_{-\overline{l}}|,  \]
and $k\neq l \in \mb N_0^2$. 
The normalization factor in~\eqref{eq:sec.curv.klein} is $4|a_k|^2|b_l|^2 \norm{l}^2$.
The first sum in~\eqref{eq:sec.curv.klein} is bounded by
\[\frac{16 \norm{k}^2}{S_{\mb T}} \norm{l}^2 |a_k|^2 |b_l|^2, \]
since there are 8 index pairs $j = (j_1, j_2) \in \mb Z^2$ such that $0<|a_kb_j + \overline{a_k} b_{2k+j}|^2 \leq 4|a_k|^2|b_j|^2,$ while all other indices do not contribute.
The second sum in~\eqref{eq:sec.curv.klein} is bounded by 
\[\frac{2 \norm{k}^2}{S_{\mb T}} \norm{l}^2 |a_k|^2 |b_l|^2, \]
since there are at most 2 index pairs $j = (j_1, j_2) \in \mb Z^2$ such that $0<\re(a_k^2 b_jb_{-k+\overline{k} -j} + |a_k|^2b_j  b_{-k-\overline{k}- j}) \leq |a_k|^2|b_j|^2,$ while there is zero contribution from all other index pairs. 

The third sum in~\eqref{eq:sec.curv.klein} is bounded by 
\[\frac{8 \min\{k_1^2, k_2^2\} }{ S_{\mb T}} \norm{l}^2 |a_k|^2 |b_l|^2, \]
since there are at most 2 index points $j = (j_1, j_2) \in \mb Z^2$ such that either $0<\re(a_k^2 b_jb_{-k+\overline{k} -j} ) \leq |a_k|^2|b_j|^2,$ or  $0<\re(|a_k|^2b_j  b_{-k-\overline{k}- j} ) \leq |a_k|^2|b_j|^2 $,  with no contribution from other indices.
\end{proof}

\begin{theorem}\label{th:ric.klein}
Let $\xi_k=  a_ke_k + a_{-k} e_{-k} + a_{\overline{k}} e_{\overline{k}} + a_{-\overline{k}} e_{-\overline{k}},$ with $a_k\in\mathbb C$, as in~\eqref{eq:Compbasis}.
Then the normalized Ricci curvature of $\SDiff(\mb K)$ in the direction $\xi_k$ is
\[ \Ric(\xi_k) = -\frac{3\norm{k}^2}{8S_{\mb T}}  = -\frac{3\norm{k}^2}{16S_{\mb K}} .\]
\end{theorem}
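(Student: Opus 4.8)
The plan is to compute the normalized Ricci curvature as the averaged limit $\Ric(\xi_k) = \lim_{R\to\infty} S_{|\mathcal{B}_R|}(\xi_k)$ using Definition~\ref{def:norm.ric.curv}, where the sum runs over the orthogonal basis $\mathcal{B}$ from Theorem~\ref{th:basis.g-1}. First I would split the basis elements $\eta_l$ (with $\norm{l}\le R$) into two classes: the \emph{generic} ones satisfying $l_1\neq k_1$ and $l_2\neq k_2$, to which Corollary~\ref{cor:neg.C.basis} and its asymptotic refinement Theorem~\ref{th:sec.curv.large.l} apply, and the \emph{exceptional} ones lying on the ``cross'' $l_1=k_1$ or $l_2=k_2$, for which only the crude bound of Lemma~\ref{lem:bounded.sec.curv.klein} is available. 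The key observation is that by Lemma~\ref{lem:basis.element.counting} the total number of basis elements grows like $\tfrac{\pi}{2}R^2$, whereas the number of exceptional elements on the cross grows only linearly in $R$; since their curvatures are uniformly bounded by Lemma~\ref{lem:bounded.sec.curv.klein}, their contribution to $S_N(\xi_k)$ is $O(R)/O(R^2)\to 0$ and can be discarded in the limit.

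Next I would evaluate the dominant contribution from the generic elements. By Theorem~\ref{th:sec.curv.large.l}, for a generic direction with angular limit $\lambda=\arctan(l_2/l_1)$ the sectional curvature converges to
\[
C(\xi_k,\eta_l)\ \longrightarrow\ -\frac{\norm{k}^2}{2S_{\mb T}}\bigl(\sin^4(\lambda-\kappa)+\sin^4(\lambda+\kappa)\bigr),
\qquad \kappa=\arctan\!\Bigl(\tfrac{k_2}{k_1}\Bigr).
\]
Since the lattice points $l\in\mathbb{N}_0^2$ with $\norm{l}\le R$ equidistribute in angle over the first quadrant $[0,\pi/2]$ as $R\to\infty$, the normalized average of these curvatures becomes an angular integral. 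Concretely, I expect the Ricci average to reduce to
\[
\Ric(\xi_k)=-\frac{\norm{k}^2}{2S_{\mb T}}\cdot\frac{2}{\pi}\int_0^{\pi/2}\bigl(\sin^4(\lambda-\kappa)+\sin^4(\lambda+\kappa)\bigr)\,d\lambda,
\]
where the factor $\tfrac{2}{\pi}$ is the uniform angular density over the quadrant. Using $\int_0^{\pi/2}\sin^4\!\alpha\,d\alpha=\tfrac{3\pi}{16}$ (after shifting by $\pm\kappa$ and invoking periodicity to absorb the shift), each of the two integrals contributes $\tfrac{3\pi}{16}$, giving $\tfrac{2}{\pi}\cdot\tfrac{3\pi}{8}=\tfrac{3}{4}$ and hence $\Ric(\xi_k)=-\tfrac{3\norm{k}^2}{8S_{\mb T}}$. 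The final equality $S_{\mb T}=2S_{\mb K}$ then yields the stated form $-\tfrac{3\norm{k}^2}{16S_{\mb K}}$.

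The main obstacle is making rigorous the passage from the finite averaged sum $S_N(\xi_k)$ to the angular integral. Two issues must be controlled simultaneously: the error in the pointwise limit of Theorem~\ref{th:sec.curv.large.l} (the curvature equals its angular limit only up to an $O(1/\norm{l})$-type correction), and the lattice-point discrepancy in the equidistribution of $\{l/\norm{l}\}$ over the quadrant. I would handle this by an annular decomposition: group the generic $\eta_l$ into dyadic (or thin) annuli $R_{j}\le\norm{l}<R_{j+1}$, on each of which the curvature is nearly constant in the radial variable and the number of lattice points with angle in a given arc is proportional to the arc's length up to a lower-order error (by standard Gauss-circle counting, the count in an annulus of radii $\sim R$ and a fixed angular sector is $\tfrac{2}{\pi}\cdot(\text{sector fraction})\cdot(\text{annulus count})+O(R)$). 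Summing over annuli and normalizing by $|\mathcal{B}_R|\sim\tfrac{\pi}{2}R^2$ produces a Riemann sum converging to the integral above, with all discrepancy and pointwise-error terms being $o(R^2)$ and thus vanishing after normalization. A careful bookkeeping here — ensuring the uniform bound of Lemma~\ref{lem:bounded.sec.curv.klein} dominates the finitely many slowly-converging terms near the cross while the equidistribution governs the bulk — is the technical heart of the argument; the trigonometric integral itself is routine.
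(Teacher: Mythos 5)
Your proposal is correct and follows the same overall architecture as the paper's proof: restrict to the subsequence $N=|\mathcal{B}_R|$, use Lemma~\ref{lem:basis.element.counting} to get $|\mathcal{B}_R|\sim\tfrac{\pi}{2}R^2$, discard the ``cross'' $l_1=k_1$ or $l_2=k_2$ as a thin set via the uniform bound of Lemma~\ref{lem:bounded.sec.curv.klein}, and feed the asymptotic of Theorem~\ref{th:sec.curv.large.l} into the average. The one place where you genuinely diverge is the evaluation of the angular average: the paper splits the limit into the constant $-\tfrac{3\norm{k}^2}{8S_{\mb T}}$ plus an oscillating remainder $f(\lambda)$ and kills the latter by a bespoke reflection/pairing argument on the lattice (Lemma~\ref{lem:klein.ricci.sum.over.f} in the Appendix, reflecting indices through the rays $\lambda=\pi/4$ and $\lambda=\pi/8$), whereas you invoke angular equidistribution of lattice points in the quarter-disk and compute $\tfrac{2}{\pi}\int_0^{\pi/2}\bigl(\sin^4(\lambda-\kappa)+\sin^4(\lambda+\kappa)\bigr)\,d\lambda=\tfrac34$ directly. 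Both are valid; your route is the more standard one and makes the source of the constant $3/4$ transparent, while the paper's pairing argument is self-contained and sidesteps quantifying the equidistribution. One small imprecision: since $[0,\pi/2]$ is only a half-period of $\sin^4$, you cannot ``absorb the shift by periodicity'' in each integral separately; it is the \emph{sum} of the two shifted integrals that equals $2\int_0^{\pi/2}\sin^4\alpha\,d\alpha=\tfrac{3\pi}{8}$ (by the symmetries of $\sin^4$ about $0$ and $\pi/2$), so the final value is right but the justification should be stated for the pair.
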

\begin{proof}
Consider a partial order on $\mcE_l= b_le_l + b_{-l} e_{-l} + b_{\overline{l}} e_{\overline{l}} + b_{-\overline{l}} e_{-\overline{l}} \in \mathcal{B}$ for $e_l \coloneq e^{i(l_1x_1 + l_2x_2)}$ by descending eigenvalues 
\[ \Delta\mcE_l = \left(\frac{\p^2}{\p x_1^2} + \frac{\p^2 }{\p x_2 ^2}  \right) \mcE_l  = -\norm{l}^2\mcE_l. \]
Let $S_N(\xi_k)$ be the sequence~\eqref{eq:SN} converging to the normalized Ricci curvature. 
Let  
\begin{equation}\label{eq:BR} 
\mathcal{B}_R = \left\{ \mcE_l \in \mathcal{B}  \mid \norm{l}\leq R \right\}.  
\end{equation}
From now on we consider the subsequence of $S_N(\xi_k)$ where $N$ is restricted to $N = \big|\mathcal{B}_R\big|$, where $\big|\mathcal{B}_R\big|$ is the number of basis elements in $\mathcal{B}$ with $\norm{l}\leq R$, see Lemma~\ref{lem:basis.element.counting}. Then 
\begin{equation}\label{eq:SN.BR}
S_N (\xi_k)= S_{|\mathcal{B}_R|} (\xi_k)= \frac{1}{|\mathcal{B}_R| }  \sum_{\mcE_l \in \mathcal{B}_R} C(\xi_k, \mcE_l).
\end{equation}
The subsets of $\mathcal B_R$ which grows as $\sim R^2$ will determine the value of the sequence. 
\begin{itemize}
\item For fixed $k$ the sectional curvatures $ C(\xi_k, \mcE_l) $ are uniformly bounded when varying $\mcE_l$, see Lemma~\ref{lem:bounded.sec.curv.klein}. One can thus remove a finite set $D$, i.e. a set where $|D|$ does not depend on $R$, from $\mathcal{B}_R$ without changing the value of $\lim\limits_{R\to\infty} S_{|\mathcal{B}_R|}$. 
\item Similarly, one can remove \textit{thin} sets $E_R$, where $|E_R|$ grows as $\sim R$, from $\mathcal{B}_R$, since if $C_k$ is the constant bounding $C(\xi_k, \mcE_l) $ from Lemma~\ref{lem:bounded.sec.curv.klein}. Indeed, given any $\epsilon>0$
\[ \left| \frac{1}{|\mathcal{B}_R| }  \sum_{\mcE_l \in E_R} C(\xi_k, \mcE_l) \right| \leq  \frac{1}{|\mathcal{B}_R| } |E_R| C_k \leq \frac{cR}{\frac{\pi}{2}R^2 + c_1R + c_0} C_k <\epsilon ,  \]
for $R$ sufficiently large. 
\end{itemize}

\begin{figure}
\centering
\includegraphics[width=0.3\textwidth]{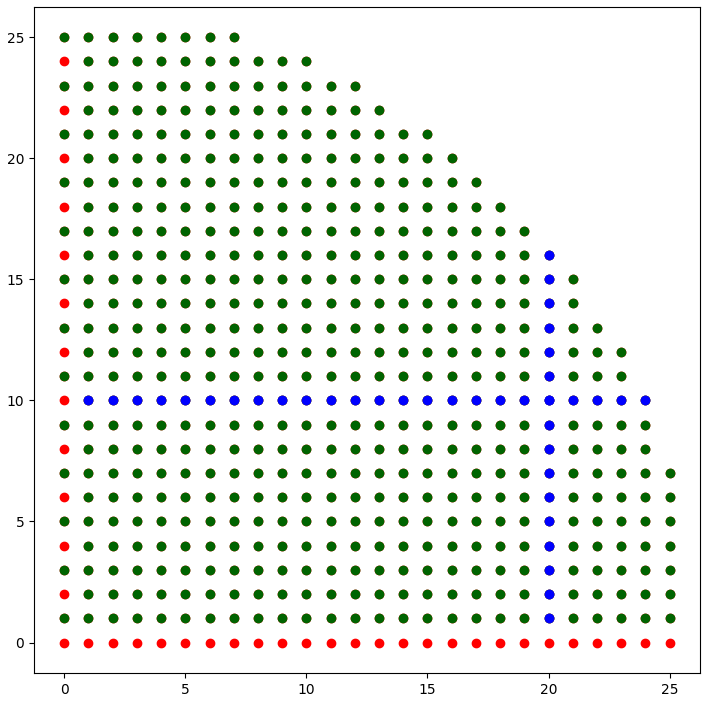}
\caption{ \tiny   Here $R=25$ and $k = (20, 10)$ as in Figure~\ref{fig:blanket}. The green points correspond to the indices $l$ such that $\mcE_l^{\Re} \in A_R $, the blue to the indices $l$ such that $\mcE_l^{\Re} \in  \mathcal{B}_R \setminus A_R$ and the red to the $l$ such that $\mcE_l^{\Re} \notin  \mathcal{B} $.   }
\label{fig:AR_complement}
\end{figure}
Let $A_{R}$ be the set of basis elements $\mcE^{\Re}_l, \mcE^{\Im}_l \in \mathcal{B}_R$ where $\norm{l}\leq R $, $k_1\neq  l_1$ and $k_2 \neq l_2$. 
The set $A_R$ grows as $\sim R^2$. Actually, since the indices of $\mathcal B_R\setminus A_R$ consist of four lines, two in each copy of $\mb N_0^2$, see Figure~\ref{fig:AR_complement}, $|\mathcal B_R\setminus A_R | \leq 4R $ and hence both
$|A_R|$ and $|\mathcal{B}_R|$ grow as $\frac{\pi}{2}R^2 $, see Lemma~\ref{lem:basis.element.counting}. By removing the \textit{thin} set $\mathcal{B}_R \setminus A_R$ we obtain that 
\begin{equation*}\label{eq:SBR.cross.removed}
\lim\limits_{R\to\infty} S_{|\mathcal{B}_R|} = \lim\limits_{R\to\infty}  \frac{1}{|\mathcal{B}_R|} \sum_{\mcE_l \in A_R} C(\xi_k, \mcE_l)  . 
\end{equation*}
Now set
\[f(\lambda) = \frac{\norm{k}^2}{8S_{\mb T}} \left(4\cos(2\lambda) \cos(2\kappa) - \cos(4\lambda) \cos(4\kappa)\right), \quad \lambda  =  {\rm arctan}\left(\frac{l_2}{l_1}\right) , \ \kappa =  {\rm arctan}\left(\frac{k_2}{k_1}\right).  \] 
By Theorem~\ref{th:sec.curv.large.l} and using trigonometric identities 
we have that for $\norm{l} = R$
\[ \lim\limits_{R\to \infty } C(\xi_k, \mcE_l) = - \frac{3\norm{k}^2}{8S_{\mb T}} + f(\lambda). \]
By removing the finite set $A_{R_1}$, where $C(\xi_k, \mcE_l)$ is not well approximated by the asymptotic function, we get 
\begin{multline*}
\lim\limits_{R\to \infty} S_{|\mathcal B_R|} = \lim\limits_{R\to\infty} \frac{1}{|\mathcal{B}_R|}  \sum_{\mcE_l \in A_R\setminus A_{R_1}} \Big(- \frac{3\norm{k}^2}{8S_{\mb T}} + f(\lambda) \Big)  \\
= - \frac{3\norm{k}^2}{8S_{\mb T}}  \lim\limits_{R\to\infty} \frac{|A_R\setminus A_{R_1}|}{|\mathcal{B}_R|}  +\lim\limits_{R\to\infty} \frac{1}{|\mathcal{B}_R|}  \sum_{\mcE_l \in A_R\setminus A_{R_1}}  f(\lambda) .
\end{multline*}
By auxiliary Lemma~\ref{lem:klein.ricci.sum.over.f}, proved in Appendix, the term with $f(\lambda)$ vanishes.
Since the set $|A_R\setminus A_{R_1}|$ grows as $\frac{\pi}{2} R^2$, we get the desired result.
\end{proof}


\section{Curvatures of SDiff$(\mathbb{RP}^2)$}\label{sec:real.projective.plane}

In this section we describe sectional and normalized Ricci curvatures for the group of measure-preserving diffeomorphisms
of the projective plane. Since the sphere $\mb S^2$ is the orientation cover for $\mathbb{RP}^2$, we will start our investigation by finding a suitable basis for $\SVect_I(\mb S^2)$, similarly to Section~\ref{sec:klein.bottle.case}.

\subsection{Spherical harmonics and a basis for SVect$(\mathbb{RP}^2)$}
Divergence-free vector fields on the sphere can be described by their stream functions. As before, we consider a Laplace eigenbasis on the sphere.
A solution to the Laplace equation restricted to $\mathbb S^2$ is a linear combination of the functions
$Y^m_l(\theta,\phi)$ satisfying the eigenvalue problem
\begin{equation}\label{eq:eigenvalue problem}
\Delta Y^m_l(\theta,\phi)=-l(l+1)Y^m_l(\theta,\phi),\quad l\in \mb N_0, \quad  m \in \{ -l, \dots, l\},
\end{equation}
and called spherical harmonics.
They can be written in the form
\begin{equation}\label{eq:spherical harmonics}
Y^m_l(\theta,\phi)=\Big[\frac{(l-m)!}{(l+m)!}\frac{(2l+1)}{4\pi}\Big]^{1/2}e^{i\phi m}P^m_l(\cos\theta),
\end{equation}
where for $z=\cos\theta$ and $m \in \{ -l, \dots, l\}$
$$
P^m_l(z)=(1-z^2)^{m/2}\frac{1}{2^ll!}\frac{d^{l+m}}{dz^{l+m}}(z^2-1)^l
$$
are the associated Legendre polynomials\footnote{We do not include the Condon-Shortley phase $(-1)^m$ into the definition of the associated Legendre polynomials, nor in the definition of spherical harmonics. The same (acoustics communities) convention is used in~\cite{lukatskii_curvature_1979}.}. 

The normalization constants are chosen so that the complex spherical harmonics form an orthonormal system with respect to the Hermitian scalar product. Namely:
$$
\|Y^m_l(\theta,\phi)\|^2=\int_{\mathbb S^2}Y^m_l(\theta,\phi)\overline{Y^m_l(\theta,\phi)}\sin\theta\, d\theta d\phi=1,
$$
\begin{equation}\label{eq: inner-product}
\langle Y^m_l,Y^k_n\rangle=\int_{\mathbb S^2}Y^m_l(\theta,\phi)\cdot \overline{Y^k_n(\theta,\phi)}\sin\theta\, d\theta d\phi=\delta_{l,n}\delta_{m,k}.
\end{equation}
%
%
%
%
%
%
%
%

The group  $\SDiff(\mathbb S^2)$ consists of  measure-preserving diffeomorphisms of the sphere  $\mathbb S^2$ with
the area form $\mu=\sin\theta\, d\theta \wedge d\phi$. The corresponding Lie algebra $\SVect(\mathbb S^2)$ is formed by divergence-free vector fields. For computations below we use complex valued vector fields and then project them to the real subspaces. Extend the inner product on $\SVect(\mb S^2)$ to a Hermitian inner product on $\SVect(\mb S^2)\otimes{\mb{C}}$
\begin{equation}\label{eq: hermitian-product}
\langle u,v\rangle=\int_{\mathbb S^2}(u(p),\overline{v(p)})\,\mu(p),\quad u,{v}\in \SVect(\mathbb S^2)\otimes {\mb C},
\end{equation}
where $(u(p),\overline{v(p)})$ is the standard Hermitian product in $\mb C^3$, complexification of $\mb R^3\supset \mb S^2$.  
 The inner product on $\SVect(\mathbb S^2)$ defines the right-invariant $L^2$-metric on $\SDiff(\mathbb S^2)$.
\medskip

Stream functions $f_v={\rm sgrad}\, v $ for complexified vector fields $v\in \SVect(\mathbb S^2)\otimes {\mb C}$ are defined as in Section~\ref{sec:stream.func}. The spherical harmonics $Y^m_l$ form an orthonormal basis, denoted by $\mathcal Y$, for the Poisson algebra 
\[
\mf g^{\mb S^2}\otimes {\mb C} = \left\{ f: \mb S^2 \to \mathbb{C}\mid df = \mu(v_f, \cdot  ) = \sin\theta (-v_\phi d\theta + v_\theta d\phi)  \right\}.
\]
Vector fields ${\rm sgrad}\,Y^m_l$ are normalized using the inner product~\eqref{eq: hermitian-product} in $\SVect(\mb S^2)\otimes {\mb C}$, 
\begin{eqnarray*}
\int_{\mathbb S^2}{\rm sgrad}\,Y^m_l\cdot\overline{ {\rm sgrad}\, Y^m_l}\,d\mu&=&
l(l+1)\int_{\mathbb S^2}\big|Y^m_l\big|^2\,d\mu=l(l+1),
\end{eqnarray*}
where we used~\eqref{eq:eigenvalue problem}. Denote by
\begin{equation}\label{eq:eml}
e^m_l={\rm sgrad}\, \big(\frac{1}{\sqrt{l(l+1)}}Y^m_l\big)
\end{equation}
the elements of the orthonormal basis of $\SVect(\mathbb S^2)\otimes {\mb C}$.
\medskip

Recall that the orientation-reversing  involution
$I:{\mathbb S^2}\to {\mathbb S^2}$, defining  $\mathbb{RP}^2$, is the antipodal map given by
$$
I(x,y,z) = (-x, -y, -z)\quad \text{or} \quad
I(\theta, \phi) = ( \pi-\theta, \pi+\phi  )
$$
in different coordinate systems, see ~\eqref{eq:InvSphere}. 
The corresponding complexified Lie algebra is defined as the set of $I$-invariant vector fields:
$$  
\SVect(\mathbb{RP}^2) \otimes \mb C =\{v\in \SVect(\mb S^2) \otimes \mb C \ \vert\ I_*v = v\circ I \}  .
$$
The associated stream functions $f\in \mf g_{-1}^{\mb S^2} \otimes \mb C  $ are, respectively, $I$-anti-invariant, 
$f\circ I=-f$, see Lemma~\ref{lem:stream.func.composed.with.I}.

\begin{lemma}\label{lem:basis.vf.rp2}
The Lie algebra $\SVect(\mb{RP}^2)\otimes \mb C$ is isomorphic to the direct sum of the spaces of spherical harmonics with odd $l$ and equipped with the Poisson bracket. More precisely, the space  $\mathcal Y_l=\{\oplus_mY^m_l,\ m\in[-l,l]\},$ splits into the direct sum of the $I$-invariant and $I$-anti-invariant subspaces:
$$
\mf g^{\mb S^2}\otimes \mb C=\underbrace{\Big(\bigoplus_{l\ \text{odd}} \mathcal Y_l\Big)}_{\mf g^{\mb S^2}_{-1}\otimes \mb C}\oplus\underbrace{\Big(\bigoplus_{l\ \text{even}}  \mathcal Y_l\Big)}_{\mf g^{\mb S^2}_1 \otimes \mb C },
$$
and $\SVect(\mathbb{RP}^2)\otimes {\mb C} \simeq  \mf g_{-1}^{\mb S^2}\otimes \mb C.$
\end{lemma}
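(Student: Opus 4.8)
The plan is to reduce the statement to the parity of the spherical harmonics under the antipodal map and then invoke Lemma~\ref{lem:stream.func.composed.with.I}. The central computation is to determine how $Y^m_l$ transforms under $I(\theta,\phi)=(\pi-\theta,\pi+\phi)$. Using the explicit expression \eqref{eq:spherical harmonics}, the azimuthal factor contributes $e^{i(\pi+\phi)m}=(-1)^m e^{i\phi m}$, so the whole question reduces to the parity of the associated Legendre polynomial $P^m_l(z)$ under $z=\cos\theta\mapsto\cos(\pi-\theta)=-z$.

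For that parity I would argue directly from the Rodrigues-type formula defining $P^m_l$. The prefactor $(1-z^2)^{m/2}$ is a function of $z^2$, hence even, and $(z^2-1)^l$ is likewise a polynomial in $z^2$; differentiating this even function $l+m$ times flips its parity once per derivative, producing a function of parity $(-1)^{l+m}$. Combining the two factors gives $P^m_l(-z)=(-1)^{l+m}P^m_l(z)$, and therefore $Y^m_l\circ I=(-1)^m(-1)^{l+m}Y^m_l=(-1)^l Y^m_l$, valid uniformly in the sign of $m$ (note $l+m\geq 0$ since $m\geq -l$). I expect tracking this sign through the derivative — the only place where $l$ and $m$ genuinely enter — to be the single delicate point; the rest is bookkeeping.

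Next I would assemble the decomposition. The map $P\colon f\mapsto f\circ I$ is an involution of $\mf g^{\mb S^2}\otimes\mb C$, since $I^2=\Id$, so the space is the direct sum of its $(+1)$- and $(-1)$-eigenspaces. By the parity formula each $Y^m_l$ is an eigenvector of $P$ with eigenvalue $(-1)^l$, and since the $Y^m_l$ form an orthonormal basis, see \eqref{eq: inner-product}, the $(+1)$-eigenspace is exactly $\bigoplus_{l\ \text{even}}\mathcal Y_l$ and the $(-1)$-eigenspace is $\bigoplus_{l\ \text{odd}}\mathcal Y_l$ (the constant $l=0$ harmonic being excluded by the zero-mean condition). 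By definition these are, respectively, $\mf g^{\mb S^2}_1\otimes\mb C$ and $\mf g^{\mb S^2}_{-1}\otimes\mb C$, which yields the displayed splitting.

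Finally, Lemma~\ref{lem:stream.func.composed.with.I} identifies $I$-invariant exact fields with $I$-anti-invariant zero-mean stream functions. Since every divergence-free field on $\mb S^2$ is exact and $\SVect(\mathbb{RP}^2)\otimes\mb C$ is by definition the algebra of $I$-invariant fields, its image under the stream-function isomorphism is precisely $\mf g^{\mb S^2}_{-1}\otimes\mb C=\bigoplus_{l\ \text{odd}}\mathcal Y_l$. The Poisson-bracket structure transports automatically: $\mf g^{\mb S^2}_{-1}\otimes\mb C$ is closed under the bracket because it is the image of the Lie subalgebra of $I$-invariant fields (cf. Corollary~\ref{cor:iso_lie_alg}), so no further verification is needed.
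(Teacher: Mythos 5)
Your proposal is correct and follows essentially the same route as the paper: the paper's (very terse) proof likewise reduces everything to the transformation rules $\cos\theta\circ I=-\cos\theta$, $\sin\theta\circ I=\sin\theta$, $e^{i\phi}\circ I=-e^{i\phi}$, which combine to give $Y^m_l\circ I=(-1)^lY^m_l$, and then invokes the stream-function correspondence of Lemma~\ref{lem:stream.func.composed.with.I}. Your version merely makes explicit the parity bookkeeping in the Rodrigues formula and the eigenspace decomposition of $f\mapsto f\circ I$ that the paper leaves implicit.
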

\begin{proof}
Since 
$$
\cos\theta\circ I=-\cos\theta,\quad \sin\theta\circ I=\sin\theta, \quad e^{-\phi}\circ I=-e^{-\phi},
$$
 the spherical harmonics with odd $l$ satisfy $f\circ I=-f$.
\end{proof}

\begin{corollary}
The group $\SDiff(\mb{RP}^2)$ with the Lie algebra $\SVect(\mb{RP}^2) \simeq  \mf g_{-1}^{\mb S^2} $ 
is a totally geodesic submanifold in the group $\SDiff(\mb S^2)$. In particular, its curvatures in two-dimensional planes 
containing odd spherical harmonics
can be computed  by using the curvatures in the ambient group $\SDiff(\mb S^2)$.
\end{corollary}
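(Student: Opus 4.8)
The plan is to deduce both assertions from the general totally-geodesic principle of Proposition~\ref{prop:isometry}, applied to the antipodal involution $I$ on the round sphere. First I would observe that $I$ is an isometry of $(\mb S^2,\tilde g)$: it is the fixed-point-free orientation-reversing involution of Example~\ref{ex:Usphere}, and the round metric $\tilde g = d\theta^2 + \sin^2\theta\, d\phi^2$ is manifestly preserved by $(\theta,\phi)\mapsto(\pi-\theta,\pi+\phi)$. Taking $\tau = I$ in Proposition~\ref{prop:isometry} then yields at once that $\SDiff_I(\mb S^2)$, the subgroup of measure-preserving diffeomorphisms commuting with $I$, is a totally geodesic subgroup of $\SDiff(\mb S^2)$ for the right-invariant $L^2$-metric. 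Combining this with the group isomorphism $F\colon \SDiff_I(\mb S^2)\to\SDiff(\mb{RP}^2)$ of Lemma~\ref{lem:lie.group.iso} (and its Lie-algebra counterpart, Corollary~\ref{cor:iso_lie_alg}, together with the identification $\SVect(\mb{RP}^2)\simeq\mf g_{-1}^{\mb S^2}$ from Lemma~\ref{lem:basis.vf.rp2}) gives the first claim.

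For the curvature statement I would invoke the defining property of totally geodesic submanifolds recorded in Section~\ref{sec:totally}: the sectional curvatures of the submanifold coincide with the ambient ones in every two-plane tangent to it. Since by Lemma~\ref{lem:basis.vf.rp2} the Lie algebra $\SVect(\mb{RP}^2)\simeq\mf g_{-1}^{\mb S^2}$ is spanned by the stream functions $Y^m_l$ with $l$ odd, every two-plane spanned by a pair of odd spherical harmonics is tangent to $\SDiff_I(\mb S^2)$, and the asserted equality of curvatures follows.

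Alternatively, mirroring the direct argument given for the Klein bottle after Corollary~\ref{cor:general.element.basis.klein}, I would verify that the Levi-Civita connection of $\SDiff(\mb S^2)$ restricts to $\mf g_{-1}^{\mb S^2}$. Concretely, one checks via the Koszul formula~\eqref{eq:Koszul.lie} that $\nabla_{e^m_l}e^{m'}_{l'}$ again lies in $\mf g_{-1}^{\mb S^2}$ whenever $l,l'$ are odd. This reduces to the parity bookkeeping for the antipodal map: since $I$ acts on $Y^m_l$ by the factor $(-1)^l$ and $I$ is an anti-symplectomorphism ($I^*\mu=-\mu$), the Poisson bracket $\{Y^m_l, Y^{m'}_{l'}\}$ of two odd harmonics is again $I$-anti-invariant, hence a combination of odd harmonics, as already encoded in Lemma~\ref{lem:basis.vf.rp2}; and because $I$ is an isometry, $\Ad_I$ preserves the $L^2$-metric, so the operator $B$ (hence $\ad^T$) is $I$-equivariant and leaves the $(-1)$-eigenspace $\mf g_{-1}^{\mb S^2}$ invariant.

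The step requiring the most care, though ultimately a mild one, is precisely this invariance of $B$ and $\nabla$ under $I$, i.e.\ the fact that $\mf g_{-1}^{\mb S^2}$ is a totally geodesic direction rather than merely a Lie subalgebra. This is exactly what Proposition~\ref{prop:isometry} subsumes, so the cleanest route is simply to cite it; the direct check is a restatement of its proof, reduced here to confirming that the antipodal parity $(-1)^l$ interacts correctly with the Poisson bracket and with the metric pairing of spherical harmonics.
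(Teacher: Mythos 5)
Your proposal is correct and follows essentially the same route as the paper: the corollary is stated there without a separate proof precisely because it is the instance of Proposition~\ref{prop:isometry} for the antipodal isometry $I$ of $\mb S^2$, combined with the identifications of Lemma~\ref{lem:lie.group.iso} and Lemma~\ref{lem:basis.vf.rp2}, exactly as you argue. Your supplementary direct check via the Koszul formula mirrors the remark the authors make after the analogous Klein-bottle corollary, so nothing is missing.
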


Note that unlike the Fourier basis of functions on the torus, 
the Poisson algebra of spherical harmonics is not a graded Lie algebra. This leads to difficulties with computing the sectional curvatures in the planes spanned by an arbitrary pair of spherical harmonics, cf.~\cite{arakelyan_geometry_1989} or~\cite{yoshida_riemannian_1997}.


\subsection{Sectional and Ricci curvatures of SDiff$(\mathbb{RP}^2)$ }
We first note that ``rotational vector field" $e^0_1=\sqrt{\frac{3}{8\pi}}\frac{\p}{\p \phi}$ descends from $\mb S^2$ to $\mb{RP}^2$, while
for the planes passing through it the sectional curvatures 
of $\SDiff(\mb S^2)$  (and hence of $\SDiff(\mb{RP}^2)$) were found in ~\cite{lukatskii_curvature_1979}:
$$
C(e^0_1,e^m_l)=\frac{3m^2}{8\pi \, l^2(l+1)^2}\,.
$$
Below we describe the sectional curvature of $\SDiff(\mb{RP}^2)$ for planes containing the ``trade wind'' 
 vector field $e^0_3$, as well as compute  asymptotic and normalized Ricci curvatures in those directions. 

\begin{figure}
\includegraphics[width=0.7\linewidth]{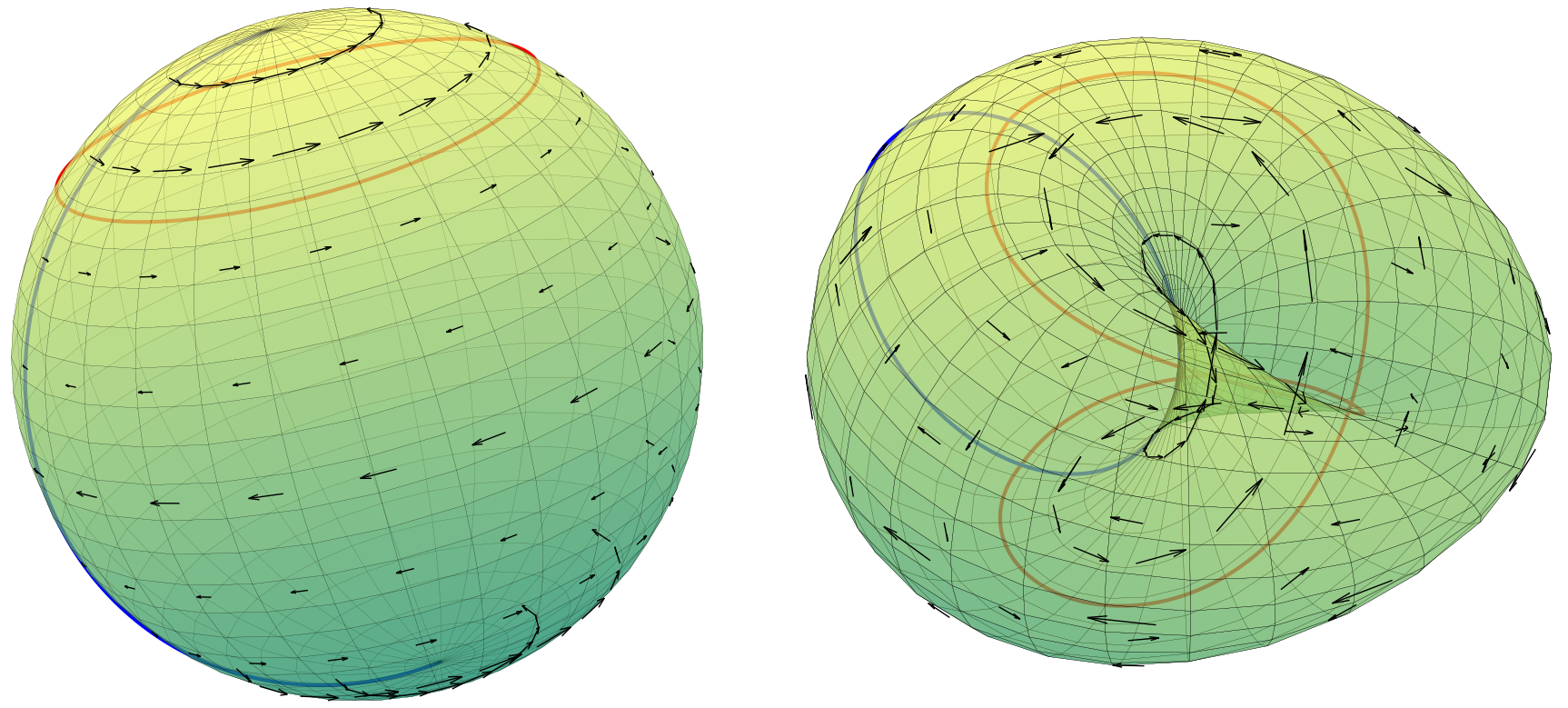}
\centering \includegraphics[width=0.45\linewidth]{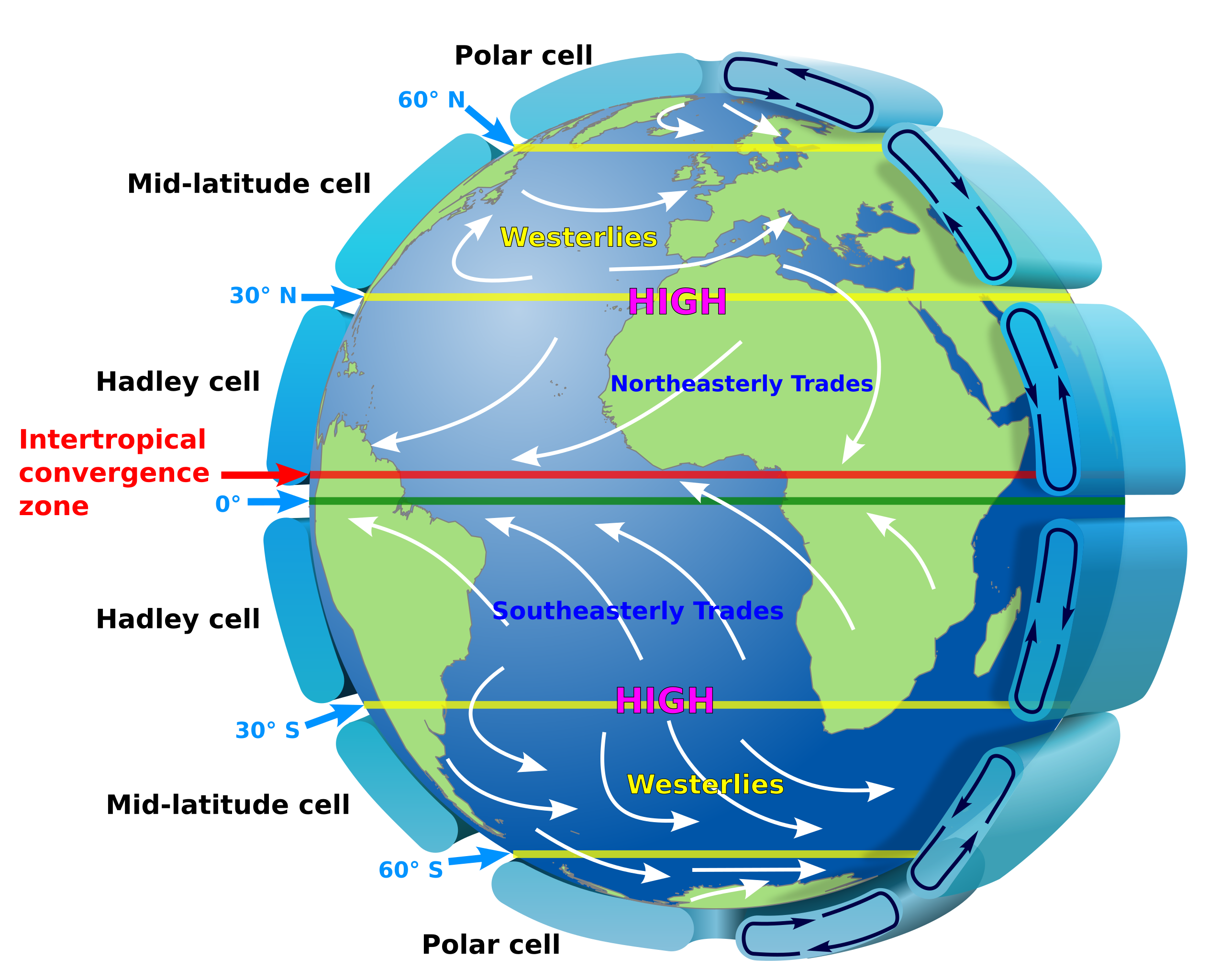}
\caption{ \tiny The vector field $e^0_3 = \frac{1}{8}\sqrt{\frac{21}{\pi}}(5\cos^2(\theta) -1)\frac{\p }{\p \phi} $ on the sphere in the subalgebra $\SVect_I(\mb S^2)$ gives a vector field on $\mathbb{RP}^2$. The visualization of $\mb{RP}^2$ is given by the so-called cross-cap immersion. The lower figure is an illustration of the ``trade wind'' on Earth. Figure attribution: Kaidor, CC BY-SA 3.0 via Wikimedia Commons.}
\label{fig:sphere_tradewind_new}
\end{figure}

Consider the basis vector field $e^0_3$ in $\SVect_I(\mb S^2) \otimes \mb C$, obtained by taking the skew-gradient of the spherical harmonic $Y^0_3 = \frac{1}{4}\sqrt{\frac{7}{\pi}} \big( 5\cos^3\theta -3\cos\theta \big)$:
\begin{equation}\label{eq:sphere.tradewind}
e^0_3= \frac{1}{\sqrt{12}} \sg(Y^0_3)= \frac{1}{8}\sqrt{\frac{21}{\pi}}\big( 5\cos^2\theta - 1 \big)\frac{\p}{\p \phi}.
\end{equation}
see Figure~\ref{fig:sphere_tradewind_new}. 

\begin{theorem}\label{th:sec.curv.cpe}
The sectional curvature of $\SDiff(\mb{RP}^2)$ in the planes spanned by the vector field $e^0_3$ and the basis element $e^m_l$ is given by
\begin{equation}\label{eq:CT2}
C(e^0_3,e^m_l)=-\frac{525m^2}{32\pi}\Big( 3\gamma^m_l - 2\Big(\frac{12}{l(l+1)}+1\Big)\varrho^m_l - \Big(\frac{12}{l(l+1)}-1\Big)^2\vartheta^m_l \Big),
\end{equation}
where
\begin{small}
\begin{eqnarray*}
\vartheta^m_l&=& \frac{1}{8}\frac{\big((l+2)^2-m^2\big)\big((l+1)^2-m^2\big)(l+1)l}{(2l+3)^2(2l+5)(2l+1)(l+2)(l+3)}
\\
+\frac{1}{8}
\frac{\big((l-1)^2-m^2\big)\big(l^2-m^2\big)(l+1)l}{(2l-3)(2l-1)^2(2l+1)(l-2)(l-1)}
&+&\frac{1}{8}
\Big(\frac{l^2-m^2}{(2l+1)(2l-1)}+\frac{(l+1)^2-m^2}{(2l+3)(2l+1)} -\frac{1}{5} \Big)^2\,,
\end{eqnarray*}
\begin{eqnarray*}
\varrho^m_l&=&\frac{1}{8}
\frac{\big((l+2)^2-m^2\big)\big((l+1)^2-m^2\big)}{(2l+1)(2l+3)^2(2l+5)}
\\
+\frac{1}{8}
\frac{\big((l-1)^2-m^2\big)\big(l^2-m^2\big)}{(2l-3)(2l-1)^2(2l+1)}
&+&\frac{1}{8}
\Big(\frac{l^2-m^2}{(2l+1)(2l-1)}+\frac{(l+1)^2-m^2}{(2l+3)(2l+1)} -\frac{1}{5} \Big)^2\,,
\end{eqnarray*}
\begin{eqnarray*}
\gamma^m_l&=&\frac{1}{8}
\frac{\big((l+2)^2-m^2\big)\big((l+1)^2-m^2\big)(l+2)(l+3)}{(2l+1)(2l+3)^2(2l+5)(l+1)l}
\\
+\frac{1}{8}
\frac{\big((l-1)^2-m^2\big)\big(l^2-m^2\big)(l-2)(l-1)}{(2l-3)(2l-1)^2(2l+1)(l+1)l}
&+&\frac{1}{8}
\Big(\frac{l^2-m^2}{(2l+1)(2l-1)}+\frac{(l+1)^2-m^2}{(2l+3)(2l+1)} -\frac{1}{5} \Big)^2\,.
\end{eqnarray*}
%
\end{small}
\end{theorem}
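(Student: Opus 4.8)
The plan is to transport the whole computation into the Poisson algebra of spherical harmonics and then apply Arnold's short curvature formula \eqref{eq:C.short.formula}. Since $e^0_3$ and $e^m_l$ are distinct elements of the orthonormal basis \eqref{eq:eml} (hence orthonormal, unless $(m,l)=(0,3)$), I may substitute them directly into
$$
C(e^0_3,e^m_l)=\langle \delta,\delta\rangle+2\langle\alpha,\beta\rangle-3\langle\alpha,\alpha\rangle
$$
with $\alpha,\beta,\delta$ as in \eqref{eq:ABD}. The first task is to express the bilinear operator $B$ through stream functions. Writing the $L^2$ metric via the inertia operator $A=-\Delta$ (so that $AY^m_l=l(l+1)Y^m_l$) and integrating by parts against the divergence-free field $\sg b$, the defining identity $\langle B(v_a,v_b),v_c\rangle=\langle v_a,[v_b,v_c]\rangle$ turns into $\int_{\mb S^2}\{Aa,b\}\,c\,\mu$, where $\{\cdot,\cdot\}$ is the Poisson bracket \eqref{eq:poisson.sphere}. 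Hence the stream function of $B(v_a,v_b)$ is $A^{-1}\{Aa,b\}$, a clean formula I would use for both orderings.

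The single genuinely structural computation is the bracket $\{Y^0_3,Y^m_l\}$. Because $Y^0_3$ is zonal, one term of the Poisson bracket drops out and, with $z=\cos\theta$,
$$
\{Y^0_3,Y^m_l\}=-\frac{1}{\sin\theta}\frac{\partial Y^0_3}{\partial\theta}\frac{\partial Y^m_l}{\partial\phi}=im\,\frac{dY^0_3}{dz}\,Y^m_l=\mathrm{const}\cdot im\,(5\cos^2\theta-1)\,Y^m_l,
$$
using $\tfrac{d}{dz}(5z^3-3z)=3(5z^2-1)$. Applying the Legendre recurrence $zP^m_l=\tfrac{l-m+1}{2l+1}P^m_{l+1}+\tfrac{l+m}{2l+1}P^m_{l-1}$ twice shows that $(5\cos^2\theta-1)Y^m_l$ lies in the span of only $Y^m_{l-2},Y^m_l,Y^m_{l+2}$. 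Thus $\Phi:=\{Y^0_3,Y^m_l\}=c_-Y^m_{l-2}+c_0Y^m_l+c_+Y^m_{l+2}$, and by the $B$-formula above the stream functions of $B(e^0_3,e^m_l)$ and $B(e^m_l,e^0_3)$ are both scalar multiples of $A^{-1}\Phi$, namely $\tfrac{\sqrt{12}}{\sqrt{l(l+1)}}A^{-1}\Phi$ and $-\tfrac{\sqrt{l(l+1)}}{\sqrt{12}}A^{-1}\Phi$, while $\alpha$ has stream function $\tfrac{1}{2\sqrt{12}\sqrt{l(l+1)}}\Phi$. Consequently $\alpha,\beta,\delta$ all live in the three-dimensional orthogonal span $\{e^m_{l-2},e^m_l,e^m_{l+2}\}$.

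Next I would assemble the three inner products. Writing $L=l(l+1)$ and $\Lambda_s=l_s(l_s+1)$ for $l_s\in\{l-2,l,l+2\}$, orthonormality of the $e^m_{l_s}$ gives $\langle v_\Phi,v_\Phi\rangle=\sum_s\Lambda_s|c_s|^2$, $\langle v_\Phi,v_{A^{-1}\Phi}\rangle=\sum_s|c_s|^2$, and $\langle v_{A^{-1}\Phi},v_{A^{-1}\Phi}\rangle=\sum_s|c_s|^2/\Lambda_s$. Substituting the scalar factors, $\langle\delta,\delta\rangle=\tfrac{L}{48}\big(\tfrac{12}{L}-1\big)^2\sum_s|c_s|^2/\Lambda_s$, $2\langle\alpha,\beta\rangle=\tfrac{12+L}{24L}\sum_s|c_s|^2$, and $3\langle\alpha,\alpha\rangle=\tfrac{1}{16L}\sum_s\Lambda_s|c_s|^2$. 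Factoring out the $m^2$ hidden in each $|c_s|^2$, the three weighted sums are precisely $\big(\tfrac{12}{L}-1\big)^2\vartheta^m_l$, $2\big(\tfrac{12}{L}+1\big)\varrho^m_l$ and $3\gamma^m_l$ up to the overall constant $\tfrac{525m^2}{32\pi}$: the weightings $\tfrac{\Lambda_0}{\Lambda_s},\,1,\,\tfrac{\Lambda_s}{\Lambda_0}$ account for the extra factors $\tfrac{(l+1)l}{(l+2)(l+3)}$, $1$, $\tfrac{(l+2)(l+3)}{(l+1)l}$ distinguishing $\vartheta,\varrho,\gamma$, while the common diagonal $s=0$ term (weight $1$ in all three) is the squared bracket shared by all of them. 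Collecting signs then yields \eqref{eq:CT2}.

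The main obstacle is the explicit evaluation of the Gaunt-type coefficients $c_\pm,c_0$, i.e. the expansion of $(5\cos^2\theta-1)Y^m_l$ in the orthonormal harmonics: one must apply the Legendre recurrence twice and carefully propagate the normalization constants $\big[\tfrac{(l-m)!}{(l+m)!}\tfrac{2l+1}{4\pi}\big]^{1/2}$, which is exactly what produces the numerators $\big((l\pm1)^2-m^2\big)$, $\big((l\pm2)^2-m^2\big)$, $(l^2-m^2)$ and the denominators $(2l\pm1),(2l\pm3),(2l+5)$. A secondary point is the passage from the complexified Hermitian formula \eqref{eq: hermitian-product} to the genuine real sectional curvature of the plane through the real field $e^0_3$; this is justified as in Arnold's torus computation behind Theorem~\ref{th:sec.curv.torus}, since $e^0_3$ is real and the Hermitian extension of $C$ restricted to this plane returns the real curvature.
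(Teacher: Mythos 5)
Your proposal is correct and follows essentially the same route as the paper's proof: Arnold's formula \eqref{eq:C.short.formula}, the zonal bracket $\{Y^0_3,Y^m_l\}=im\,\tfrac{dY^0_3}{dz}\,Y^m_l$, a double application of the Legendre recurrence to expand $(5\cos^2\theta-1)Y^m_l$ over $Y^m_{l\pm2},Y^m_l$, and the identity $12\,B(e^0_3,e^m_l)=-l(l+1)B(e^m_l,e^0_3)$ (the paper's Lemma~\ref{lem:B.operator.cond}, which your inertia-operator formula $B(v_a,v_b)=v_{A^{-1}\{Aa,b\}}$ reproduces in one line). The only difference is bookkeeping: you package both orderings of $B$ through $A^{-1}\Phi$ at once, whereas the paper computes $B(e^m_l,e^0_3)$ coefficient-by-coefficient from the adjoint identity and then invokes the lemma; the remaining explicit evaluation of $\vartheta^m_l,\varrho^m_l,\gamma^m_l$ that you defer is exactly the computation the paper carries out.
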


\begin{tiny}
\begin{table}
\centering
\begin{tabular}{l|lllllllr}
 & $l=3$ & $l=4$ & $l=5$ & $l=6$ & $l=7$ & $l=8$ & $l=9$ & $l=10$ \\ \hline
$m= \pm 1$ & 0.010 & 0.001 & 0.000 & 0.000 & -0.000 & -0.000 & -0.000 & -0.000 \\
$m= \pm 2$ & -0.172 & -0.072 & -0.034 & -0.018 & -0.010 & -0.006 & -0.004 & -0.003 \\
$m= \pm 3$ & -0.172 & -0.283 & -0.164 & -0.094 & -0.056 & -0.035 & -0.023 & -0.016 \\
$m= \pm 4$ &  & -0.190 & -0.343 & -0.241 & -0.157 & -0.103 & -0.070 & -0.048 \\
$m= \pm 5$ &  &  & -0.194 & -0.373 & -0.299 & -0.215 & -0.152 & -0.109 \\
$m= \pm 6$ &  &  &  & -0.190 & -0.385 & -0.339 & -0.263 & -0.198 \\
$m= \pm 7$ &  &  &  &  & -0.184 & -0.387 & -0.367 & -0.302 \\
$m= \pm 8$ &  &  &  &  &  & -0.177 & -0.383 & -0.385 \\
$m= \pm 9$ &  &  &  &  &  &  & -0.169 & -0.376 \\
$m= \pm 10$ &  &  &  &  &  &  &  & -0.161 \\
\end{tabular}
\caption{ \tiny $C(e^0_3, e^m_l)$ from Theorem~\ref{th:sec.curv.cpe} for $3\leq l \leq 10$. }
\label{tab:Cpelm.table}
\end{table}
\end{tiny}

Numerical values for small $l$ are shown in Table~\ref{tab:Cpelm.table}. One can see that, while most of the curvatures are negative, the first row contains positive values. We address this observation in the next theorem.

The proof of Theorem~\ref{th:sec.curv.cpe} is a rather tedious computation. It uses 
the following symmetry property, which is of independent interest. Recall that the space $\mathcal Y_l$ consists of 
spherical harmonics with eigenvalues $-l(l+1)$. 

\begin{lemma}\label{lem:B.operator.cond} Let $u \in \mathcal Y_l$ and $v\in \mathcal Y_k$.
The operator $B$ defined by $\langle B(u,v) ,w \rangle  = \langle u , [v,w] \rangle$ in $\SVect(\mb S^2)$ satisfies
\[		{k(k+1)}\,B(u,v)=-{l(l+1)}\,B(v,u).\]
\end{lemma}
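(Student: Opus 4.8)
The plan is to recast the identity as a statement about stream functions, where $B$ takes a transparent form involving the Laplace--Beltrami operator and the Poisson bracket. Write $f$, $g$ for the zero-mean stream functions of $u$ and $v$, so that $\Delta f = -l(l+1)f$ and $\Delta g = -k(k+1)g$ by the eigenvalue equation~\eqref{eq:eigenvalue problem}. Two standard facts drive the argument. First, since $\sg$ is a pointwise rotation of the gradient, the $L^2$ metric on $\SVect(\mb S^2)$ equals the Dirichlet form on stream functions: for fields $X,Y$ with stream functions $f_X,f_Y$ one has $\langle X,Y\rangle = \int_{\mb S^2}(\nabla f_X,\nabla f_Y)\,\mu = -\int_{\mb S^2} f_X\,\Delta f_Y\,\mu$. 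Second, the bracket of vector fields corresponds to the Poisson bracket of stream functions, as recorded in~\eqref{eq:poisson.sphere}; in particular $[v,w]$ has stream function $\{g,h\}$ whenever $v\leftrightarrow g$, $w\leftrightarrow h$.

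Next I would read off the stream function of $B(u,v)$ by testing against an arbitrary field $w$ with stream function $h$. Using the definition of $B$, self-adjointness of $\Delta$, the eigenvalue equation, and the cyclic invariance of the Poisson triple product $\int_{\mb S^2} f\{g,h\}\,\mu = \int_{\mb S^2} h\{f,g\}\,\mu$ (itself a consequence of the Leibniz rule together with $\int_{\mb S^2}\{g,hf\}\,\mu = 0$), one obtains
\[ \langle B(u,v),w\rangle = \langle u,[v,w]\rangle = -\int_{\mb S^2} f\,\Delta\{g,h\}\,\mu = l(l+1)\int_{\mb S^2} f\,\{g,h\}\,\mu = l(l+1)\int_{\mb S^2} h\,\{f,g\}\,\mu. \]
Comparing this with $\langle B(u,v),w\rangle = -\int_{\mb S^2}(\Delta f_{B(u,v)})\,h\,\mu$ and letting $h$ vary identifies $\Delta f_{B(u,v)} = -l(l+1)\{f,g\}$. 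Running the same computation with the roles of $u$ and $v$ exchanged, and using $\{g,f\}=-\{f,g\}$, gives $\Delta f_{B(v,u)} = k(k+1)\{f,g\}$.

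Finally, multiplying the first relation by $k(k+1)$ and the second by $l(l+1)$ shows that $k(k+1)f_{B(u,v)} + l(l+1)f_{B(v,u)}$ is annihilated by $\Delta$. Since all stream functions here are zero-mean and the only zero-mean harmonic function on $\mb S^2$ is $0$ (so $\Delta$ is injective on zero-mean functions, which also guarantees the solvability used above, as $\{f,g\}$ integrates to zero), this combination vanishes; passing back via the linear injection $\sg$ yields exactly $k(k+1)B(u,v) = -l(l+1)B(v,u)$. The only point demanding genuine care is the bookkeeping of the sign and normalization conventions relating $\langle\cdot,\cdot\rangle$, $\sg$, and $\{\cdot,\cdot\}$; however, the same bracket $\{f,g\}$ appears in both relations, so any consistent overall sign cancels in the ratio and does not affect the final identity.
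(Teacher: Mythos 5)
Your argument is correct, but it reaches the identity by a genuinely different route than the paper. The paper's proof is essentially a two-line citation: it quotes Arnold's explicit formula $B(u,v)=-\Delta f_u\nabla f_v+\nabla h$ for surfaces (with $h$ fixed by $\diverg B(u,v)=0$), substitutes the eigenvalue relations $\Delta f_u=-l(l+1)f_u$, $\Delta f_v=-k(k+1)f_v$, and concludes because $f_u\nabla f_v+f_v\nabla f_u=\nabla(f_uf_v)$ is a gradient, so the relevant combination of the two $B$'s is a divergence-free gradient field, hence zero on $\mb S^2$. You instead work entirely in the weak formulation: starting from the defining relation $\langle B(u,v),w\rangle=\langle u,[v,w]\rangle$, you convert the $L^2$ pairing of vector fields into the Dirichlet pairing of stream functions, use self-adjointness of $\Delta$ plus the eigenvalue equation, and invoke the cyclic invariance of $\int_{\mb S^2} f\{g,h\}\,\mu$ to identify $\Delta f_{B(u,v)}=-l(l+1)\{f,g\}$ and $\Delta f_{B(v,u)}=k(k+1)\{f,g\}$, after which injectivity of $\Delta$ on zero-mean functions finishes the job. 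In effect you re-derive (the dual, "curl" form of) Arnold's formula from scratch rather than citing it, which makes your proof self-contained and independent of the Helmholtz-type decomposition and the auxiliary potential $h$; the paper's version is shorter but leans on an external reference. All the ingredients you use — exactness of divergence-free fields on $\mb S^2$, vanishing of $\int\{g,\cdot\}\,\mu$, absence of nonconstant harmonic functions — are valid here, and your closing remark that the sign convention for the bracket cancels in the final ratio correctly disposes of the one place where conventions could bite. No gaps.
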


The proof of lemma is based on the fact that for a two-dimensional manifold 
$B(u,v)=-\Delta f_u\nabla f_v+\nabla h,$
where $\sg(f_u)=u$, $\sg(f_v)=v$, and $h$ is a function uniquely defined by the condition that $\diverg B(u,v)=0$, see~\cite{arnold_mathematical_1989}.  Now the result follows from the fact that for $u\in V_l$ one has 
$\Delta f_u=-l(l+1)f_u$. 
\medskip

In order to compute  the normalized Ricci curvature of $\SDiff(\mb S^2)$ in the direction of the ``trade wind'' vector field $e^0_3$ in~\eqref{eq:sphere.tradewind}, consider the basis vectors $e_l^m = \frac{1}{\sqrt{l(l+1)}}\sg\left(Y^m_l\right)$. Since $-l\leq m \leq l$, the quotient $\frac{m}{l}$ gives the slope of the straight line through the origin and the point $(l,m)\in  \mb N_0^2$, see Figures~\ref{fig:cone.lattice} and \ref{fig:blanket.rp2}.

\begin{figure}
\begin{minipage}[t]{0.5\textwidth}
\centering
\includegraphics[width=\textwidth]{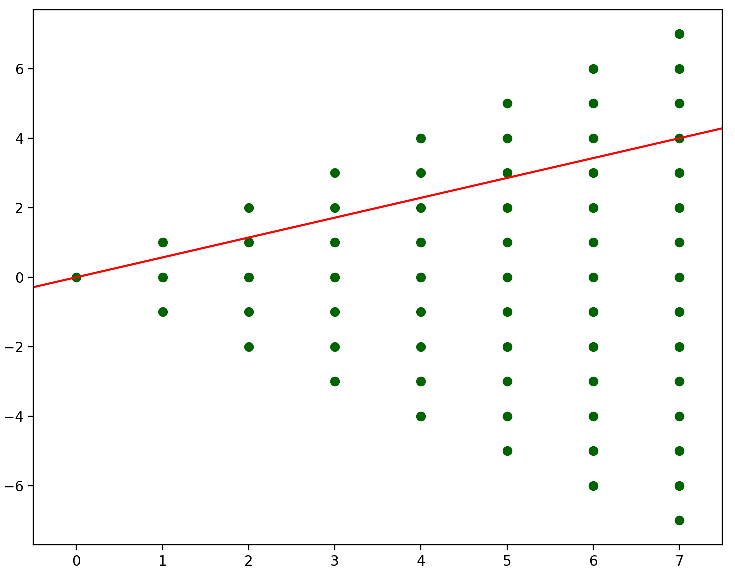}
\caption{ \tiny Lattice of the points $(l,m)$ where ${|m|\leq l}$. The line intersects $(0,0)$ and $(7,4)$ and by using Theorem~\ref{th:asymp.sec.curv.cpe} with $q=\frac{4}{7}$ we get that
${\lim\limits_{l\to \infty} C(e^0_3,e_l^m) }  \simeq -0.3749 $
for elements $e_l^m$ along the line.}
\label{fig:cone.lattice}
\end{minipage}%
\begin{minipage}[t]{0.5\textwidth}
\centering
\includegraphics[width=\textwidth]{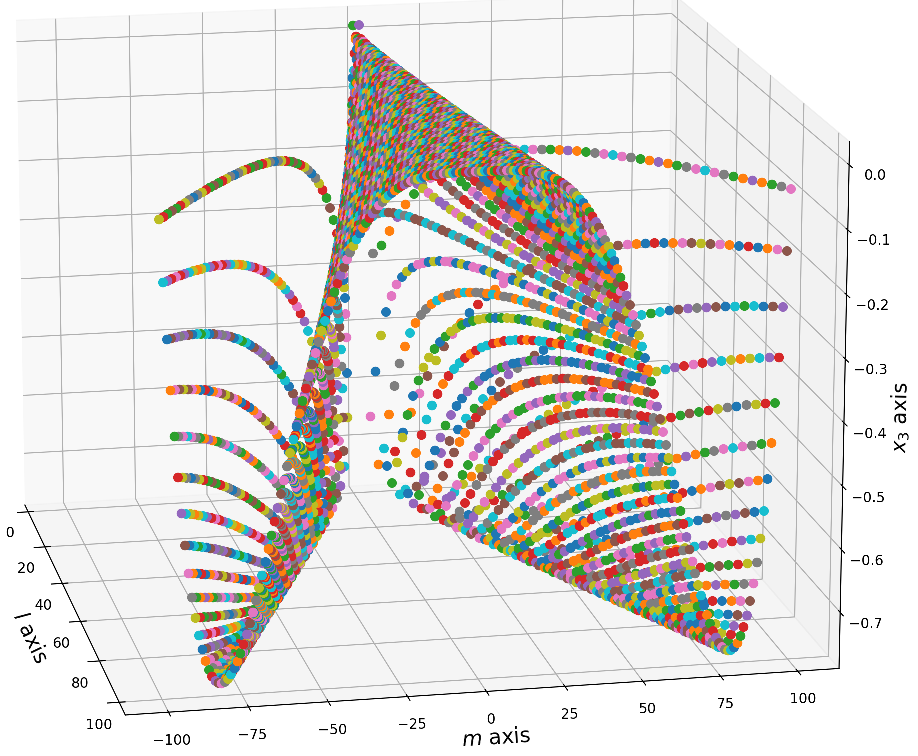}
\caption{ \tiny The value of $C(e^0_3 , e_l^m)$ along the $x_3$-axis for $e^0_3$ fixed and $e_l^m$ with $2\leq l\leq 100$ and $|m|\leq l$. For fixed large $l$, $C(e^0_3,e_l^m)$ is approximately given by $-\frac{525}{32\pi} (m/l)^4(1-(m/l)^2)$, see Theorem~\ref{th:asymp.sec.curv.cpe}. The colours are to distinguish individual points.}
\label{fig:blanket.rp2}
\end{minipage}
\end{figure}

\begin{theorem}\label{th:asymp.sec.curv.cpe}
Consider a sequence of basis elements $\{e^{m_l}_l\}_{l\in \mb N}$, where $|m_l| \leq l$ and ${\lim\limits_{l\to \infty} \frac{m_l}{l} = q}$. Then
\[\lim\limits_{l\to \infty} C(e^0_3, e^{m_l}_l) = -\frac{525}{32\pi}q^4(1-q^2) \,. \]
\end{theorem}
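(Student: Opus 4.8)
\emph{Proof proposal.} The plan is to insert the explicit formula~\eqref{eq:CT2} into the limit, writing $m=m_l$ and $x:=m/l\to q$, and to extract the surviving $1/l^2$ term of the bracket. First I would note that since $\tfrac{12}{l(l+1)}\to 0$ the two prefactors in~\eqref{eq:CT2} tend to $1$, while a direct leading-order expansion shows that $\gamma^m_l,\varrho^m_l,\vartheta^m_l$ all tend to one and the \emph{same} finite limit as $l\to\infty$, $x\to q$. Hence the naive limit of the bracket is $(3-2-1)\cdot(\text{common limit})=0$, while the outer factor $-\tfrac{525m^2}{32\pi}$ grows like $l^2$. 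The statement is therefore a genuine $0\cdot\infty$ indeterminacy, and its whole content lies in the second-order asymptotics of the bracket.

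The computation is made tractable by two structural observations. First, in each of $\gamma^m_l,\varrho^m_l,\vartheta^m_l$ the third summand (the squared term) is literally identical, so it cancels exactly in $3\gamma^m_l-2\varrho^m_l-\vartheta^m_l$ with coefficient $3-2-1=0$; only the first two summands survive. Second, denoting by $\varrho_1,\varrho_2$ the two surviving summands of $\varrho^m_l$, the corresponding summands of $\gamma^m_l$ are $\varrho_1 r$ and $\varrho_2 s$, and those of $\vartheta^m_l$ are $\varrho_1/r$ and $\varrho_2/s$, where $r=\tfrac{(l+2)(l+3)}{l(l+1)}$ and $s=\tfrac{(l-2)(l-1)}{l(l+1)}$. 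Thus
\[
3\gamma^m_l-2\varrho^m_l-\vartheta^m_l=\varrho_1\Big(3r-2-\tfrac1r\Big)+\varrho_2\Big(3s-2-\tfrac1s\Big).
\]
Each bracketed factor then simplifies via the identity $3r-2-\tfrac1r=\tfrac{(3A+B)(A-B)}{AB}$ for $r=A/B$, yielding $3r-2-\tfrac1r=\tfrac{16}{l}-\tfrac{8}{l^2}+O(l^{-3})$ and, by the symmetry $l\leftrightarrow -l$, $3s-2-\tfrac1s=-\tfrac{16}{l}+\tfrac{8}{l^2}+O(l^{-3})$. In parallel I would expand the rational factors to obtain $\varrho_i=\tfrac{(1-x^2)^2}{128}\big(1+a_i/l+O(l^{-2})\big)$ with $a_1=\tfrac{6x^2}{1-x^2}$ and $a_2=-\tfrac{2x^2}{1-x^2}$.

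Multiplying and adding, the $O(1/l)$ contributions cancel (the antisymmetry $r\leftrightarrow s$), and the surviving $1/l^2$ coefficient is $\tfrac{(1-x^2)^2}{128}\cdot 16(a_1-a_2)$; since $a_1-a_2=\tfrac{8x^2}{1-x^2}$ this equals $\tfrac{x^2(1-x^2)}{l^2}$. One must still check that the prefactors $\tfrac{12}{l(l+1)}\pm 1$ do not interfere: writing $p=\tfrac{12}{l(l+1)}=O(l^{-2})$, the full bracket is $(3\gamma^m_l-2\varrho^m_l-\vartheta^m_l)+2p(\vartheta^m_l-\varrho^m_l)-p^2\vartheta^m_l$, and since $\vartheta^m_l-\varrho^m_l=O(l^{-1})$ these corrections are $O(l^{-3})$ and drop out. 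Multiplying the bracket by $-\tfrac{525m^2}{32\pi}$ and using $m^2=x^2l^2$ with $x\to q$ then gives $C(e^0_3,e^{m_l}_l)\to-\tfrac{525}{32\pi}q^4(1-q^2)$.

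The main obstacle is exactly this delicate double cancellation: both the $O(1)$ and the $O(1/l)$ parts of the bracket vanish, so every expansion has to be carried two orders beyond the naive leading term, and one must confirm that the prefactor corrections and the gap $\vartheta^m_l-\varrho^m_l$ enter only at $O(l^{-3})$. The bookkeeping, which would otherwise be forbidding given the three three-term rational functions, is tamed by the two observations above—the exact cancellation of the squared summands and the factorization $3r-2-\tfrac1r=\tfrac{(3A+B)(A-B)}{AB}$—reducing everything to the two scalar expansions of $\varrho_1$ and $\varrho_2$.
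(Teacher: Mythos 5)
Your proposal is correct and follows essentially the same route as the paper's proof: substitute the explicit formula \eqref{eq:CT2} from Theorem~\ref{th:sec.curv.cpe}, discard the $h_l=\tfrac{12}{l(l+1)}$ corrections (using $\vartheta^m_l-\varrho^m_l=O(1/l)$), and extract the $1/l^2$ coefficient of $3\gamma^m_l-2\varrho^m_l-\vartheta^m_l$ --- your two structural observations (the common squared summand cancels with weight $3-2-1=0$, and $3r-2-\tfrac1r=\tfrac{(3A+B)(A-B)}{AB}$) are exactly a clean organization of what the paper dismisses as ``tedious computations,'' and all your expansions check out, yielding $3\gamma^m_l-2\varrho^m_l-\vartheta^m_l=\tfrac{x^2(1-x^2)}{l^2}+O(l^{-3})$ and hence the stated limit. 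The one small caveat is that your expansions of $\varrho_1,\varrho_2$ carry factors of $(1-x^2)^{-1}$ and are therefore not uniform as $q\to\pm 1$; for $q=\pm 1$ one should observe separately that $\varrho_2$ vanishes identically when $m_l=\pm l$ and $\varrho_1=O(l^{-2})$, which again gives the (zero) limit claimed by the theorem.
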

\begin{corollary}
\begin{enumerate}
\item The sectional curvature $C(e^0_3, e^m_l)$ is positive only for $m= \pm 1$, and $\lim\limits_{l\to \infty} C(e^0_3, e^{\pm 1}_l) = 0.$ 
\item The infimum of the sectional curvature $C(e^0_3, e^{m}_l)$ is equal to $ -\frac{175}{72\pi}$. The sequences $\{ C(e^0_3, e^{m_l}_l )\}$ converges to the infimum, when $\lim\limits_{l \to \infty} \frac{m_l}{l} = \pm \sqrt{\frac{2}{3}}$ . 
\end{enumerate}
\end{corollary}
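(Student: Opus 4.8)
The plan is to reduce both statements to a single one-variable analysis of the explicit expression in Theorem~\ref{th:sec.curv.cpe}, glued to the asymptotic envelope of Theorem~\ref{th:asymp.sec.curv.cpe}. Writing $\mu=m^2$, I would first record the structural identity that makes the formula tractable. Each of $\gamma^m_l,\varrho^m_l,\vartheta^m_l$ shares the common last summand
\[
T^m_l=\frac{1}{8}\Big(\frac{l^2-m^2}{(2l+1)(2l-1)}+\frac{(l+1)^2-m^2}{(2l+3)(2l+1)}-\tfrac15\Big)^2,
\]
while their first two summands are fixed rational multiples of
\[
A_1=\frac18\frac{\big((l+2)^2-m^2\big)\big((l+1)^2-m^2\big)}{(2l+1)(2l+3)^2(2l+5)},
\qquad
A_2=\frac18\frac{\big((l-1)^2-m^2\big)\big(l^2-m^2\big)}{(2l-3)(2l-1)^2(2l+1)},
\]
both of which are nonnegative for $|m|\le l$. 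Setting $p=\tfrac{12}{l(l+1)}$ and $B^m_l:=3\gamma^m_l-2(p+1)\varrho^m_l-(p-1)^2\vartheta^m_l$, a short computation shows that the coefficient of $T^m_l$ in $B^m_l$ collapses to $3-2(p+1)-(p-1)^2=-p^2$. Hence $B^m_l$ is a quadratic in $\mu$ with $l$-dependent coefficients and $C(e^0_3,e^m_l)=-\tfrac{525}{32\pi}\,\mu\,B^m_l$ is a cubic in $\mu$; this is the workhorse for everything below.

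For part~(1), the sign of $C(e^0_3,e^m_l)$ is opposite to that of $B^m_l$ when $m\neq 0$, and $C=0$ at $m=0$. The key step is to show $B^m_l\ge 0$ whenever $\mu=m^2\ge 4$, for every admissible $l\ge|m|$, by treating $B^m_l$ as a quadratic in $\mu$ and controlling the three contributions through $A_1,A_2\ge0$ against the explicitly negative $-p^2T^m_l$ term; this yields $C(e^0_3,e^m_l)\le0$ for $|m|\ge 2$. Together with $B^{\pm1}_l<0$ for small $l$ (e.g.\ $l=3$, visible in Table~\ref{tab:Cpelm.table}) this gives the implication $C>0\Rightarrow m=\pm1$. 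The limit is then immediate from Theorem~\ref{th:asymp.sec.curv.cpe}: since $m_l/l=\pm1/l\to0$, writing $g(q):=-\tfrac{525}{32\pi}q^4(1-q^2)$ we get $\lim_{l\to\infty}C(e^0_3,e^{\pm1}_l)=g(0)=0$.

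For part~(2), I would first minimise the envelope $g$ on $[-1,1]$. From $g'(q)=-\tfrac{525}{16\pi}q^3(2-3q^2)$ the interior critical points are $q=0$ and $q=\pm\sqrt{2/3}$, and $g(\pm\sqrt{2/3})=-\tfrac{525}{32\pi}\cdot\tfrac49\cdot\tfrac13=-\tfrac{175}{72\pi}$ is the global minimum. By Theorem~\ref{th:asymp.sec.curv.cpe}, any sequence with $m_l/l\to\pm\sqrt{2/3}$ converges to this value, so $\inf_{l,m}C(e^0_3,e^m_l)\le-\tfrac{175}{72\pi}$ and the stated sequences realise the infimum. It remains to prove the matching lower bound $C(e^0_3,e^m_l)\ge-\tfrac{175}{72\pi}$ for all admissible $(l,m)$; here I would fix $l$ and minimise the cubic $\mu\mapsto-\tfrac{525}{32\pi}\mu\,B^m_l$ over the real interval $\mu\in[0,l^2]$, whose minimum dominates the discrete minimum over $\mu=m^2$, locate its critical points, and bound the critical value to show it stays strictly above $-\tfrac{175}{72\pi}$ and decreases to it as $l\to\infty$.

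The main obstacle is precisely this uniform lower bound. The difficulty is that no term-by-term comparison with $g$ is available: the finite-$l$ curvature lies \emph{below} its own local envelope value $g(m/l)$ near $q=1$ (where $g\to0$ but the discrete values are bounded away from $0$) yet \emph{above} it near the minimiser $q=\sqrt{2/3}$, so the sign of $C(e^0_3,e^m_l)-g(m/l)$ changes across the range. One must therefore control the $O(1/l)$ correction to the cubic uniformly in $\mu$, which is most delicate exactly where the minimum is approached. I expect to split into a finite range $l\le L_0$ verified by exact evaluation and a tail $l>L_0$ handled by this uniform expansion of $B^m_l$ in powers of $1/l$, checking that the correction cannot push the minimum below $-\tfrac{175}{72\pi}$.
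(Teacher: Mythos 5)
Your route is the one the paper itself implicitly relies on: the corollary is stated there without a proof of its own, as a consequence of the explicit formula of Theorem~\ref{th:sec.curv.cpe}, the numerics of Table~\ref{tab:Cpelm.table}, and the asymptotics of Theorem~\ref{th:asymp.sec.curv.cpe}. The pieces you actually execute are correct and match that reading: the collapse of the coefficient of the common summand $T^m_l$ to $3-2(p+1)-(p-1)^2=-p^2$, the minimization of the envelope $g(q)=-\tfrac{525}{32\pi}q^4(1-q^2)$ at $q=\pm\sqrt{2/3}$ with value $-\tfrac{175}{72\pi}$, the consequent upper bound on the infimum together with the realizing sequences, and the limit $C(e^0_3,e^{\pm1}_l)\to g(0)=0$.

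The gap is that the two inequalities carrying all of the non-asymptotic content of the corollary are announced but never established: (a) $C(e^0_3,e^m_l)\le 0$ for \emph{every} admissible $l$ once $|m|\ge 2$, and (b) the uniform lower bound $C(e^0_3,e^m_l)\ge-\tfrac{175}{72\pi}$. Theorem~\ref{th:asymp.sec.curv.cpe} only controls limits along sequences and says nothing about finite $(l,m)$, and the table covers only $l\le 10$, so neither claim follows from what you have written. Your sketch for (a) also understates the difficulty: writing $B^m_l=c_1A_1+c_2A_2-p^2T^m_l$ with $c_1=3r-2(p+1)-(p-1)^2/r$, $c_2=3s-2(p+1)-(p-1)^2/s$, $r=\tfrac{(l+2)(l+3)}{l(l+1)}>1$, $s=\tfrac{(l-2)(l-1)}{l(l+1)}<1$, the map $x\mapsto 3x-2(p+1)-(p-1)^2/x$ is increasing and equals $-p^2$ at $x=1$, so $c_2<0$ while $c_1>0$; hence nonnegativity of $B^m_l$ requires the single positive term $c_1A_1$ to dominate \emph{two} negative ones, with $c_1$ and $|c_2|$ both of order $1/l$ --- a genuine cancellation estimate, not the term-by-term comparison your phrasing suggests. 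The same obstruction, which you yourself flag, blocks (b). As written the proposal is a sound and well-organized plan, but its decisive quantitative steps remain open.
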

\begin{proof}[Proof of Theorem \ref{th:asymp.sec.curv.cpe}]
Rewrite the expression for $C(e^0_3,e_l^{m_l})$ from Theorem~\ref{th:sec.curv.cpe} in the form
\begin{equation*}
C(e^0_3, e_l^{m_l})  = -\frac{525}{32 \pi} \left( - h_l^2{m_l}^2\vartheta^{m_l}_l   + 2h_l{m_l}^2(\varrho^{m_l}_l - \vartheta^{m_l}_l)   + {m_l}^2 ( 3\gamma^{m_l}_l -2\varrho^{m_l}_l -\vartheta^{m_l}_l  )\right),
\end{equation*}
where $h_l = \frac{12}{l(l+1)}$. Since as $l\to \infty$ the terms with $h_l$ and  $h_l^2$ vanish, one gets
\begin{equation}\label{eq:asymp.cpe.formula}
C(e^0_3,  e_l^{m_l})  =-\frac{525{m_l}^2}{32 \pi} \left(  3\gamma^{m_l}_l  -2\varrho^{m_l}_l  -\vartheta^{m_l}_l \right) + O(1/l).
\end{equation}
Expanding in powers of $l$ and ${m_l}$ and using that  $ \frac{m_l}{l} = q + O(1/l)$, after tedious computations, one obtains
$$
C(e^0_3, e_l^{m_l})  =- \frac{525}{32 \pi} q^4(1-q^2) + O(1/l).
$$
\end{proof}

\begin{corollary}\label{cor:ricci.rp2}
The normalized Ricci curvature of $\SDiff(\mb S^2)$ and $\SDiff(\mb{RP}^2)$ in the direction $e^0_3$ is
\[\Ric(e^0_3) = -\frac{15}{8\pi } = -\frac{15}{2S_{\mb S_2}} = -\frac{15}{ 4 S_{\mb{RP}^2 } }. \]
\end{corollary}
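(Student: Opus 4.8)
The plan is to run the same argument as in the proof of Theorem~\ref{th:ric.klein}, with the angular average over the torus lattice replaced by an average over the slope $q=m/l$ on the cone $\{(l,m):|m|\le l\}$. By Definition~\ref{def:norm.ric.curv} the normalized Ricci curvature is $\lim_N S_N(e^0_3)$, and I would evaluate it along the subsequence $N=|\mathcal B_R|$ made of the basis fields $e^m_l$ whose Laplace eigenvalue obeys $\sqrt{l(l+1)}\le R$, i.e. essentially $l\le R$. Since $e^0_3$ is an odd spherical harmonic it descends to $\mb{RP}^2$, and by the totally geodesic property and the isometric normalization of Section~\ref{sec:totally} the numbers $C(e^0_3,e^m_l)$ agree whether computed in $\SDiff(\mb S^2)$ or in $\SDiff(\mb{RP}^2)$; the sole difference is that for $\mb{RP}^2$ the index $l$ runs through odd values only, which I check at the end does not affect the limit.

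Two ingredients are needed, parallel to Lemmas~\ref{lem:basis.element.counting} and~\ref{lem:bounded.sec.curv.klein}. The first is a counting estimate: at level $l$ there are $2l+1$ basis fields, so $|\mathcal B_R|$ grows quadratically, $|\mathcal B_R|\sim c\,R^2$ (with the same rate, up to the constant, for all $l$ and for odd $l$ only). The second is a uniform bound $|C(e^0_3,e^m_l)|\le \mathrm{const}$ independent of $(l,m)$; this can be read off either from the closed form of Theorem~\ref{th:sec.curv.cpe} (the leading combination $3\gamma^m_l-2\varrho^m_l-\vartheta^m_l$ is $O(1/l^2)$, which absorbs the $m^2\le l^2$ prefactor, the $h_l$-terms being likewise bounded) or from the locally uniform convergence in Theorem~\ref{th:asymp.sec.curv.cpe}, whose limit profile is bounded. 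Granting these, any family of exceptional directions of cardinality $O(R)$ — in particular the two rows $m=\pm1$ (where the curvature is positive but tends to $0$) and the boundary rays $|m|=l$ — contributes $O(R)/O(R^2)\to0$ to $S_{|\mathcal B_R|}$ and may be dropped, exactly as the thin set $\mathcal B_R\setminus A_R$ was dropped in Theorem~\ref{th:ric.klein}.

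On the surviving bulk I would replace $C(e^0_3,e^m_l)$ by its limit from Theorem~\ref{th:asymp.sec.curv.cpe}, namely $g(q)=-\tfrac{525}{32\pi}q^4(1-q^2)$ with $q=m/l$, the uniform bound controlling the replacement error and the finitely many low levels on which the asymptotics is inaccurate being removed as above. As $R\to\infty$ the slopes $m/l$ equidistribute in $[-1,1]$, so the normalized partial sum converges to the integral of the limiting profile:
\[
\Ric(e^0_3)=\int_{-1}^{1}g(q)\,dq=-\frac{525}{32\pi}\int_{-1}^{1}\!\big(q^4-q^6\big)\,dq=-\frac{525}{32\pi}\Big(\frac25-\frac27\Big)=-\frac{525}{32\pi}\cdot\frac{4}{35}=-\frac{15}{8\pi}.
\]
Because $g$ is even and depends only on $q$, the odd-$l$ subsequence used for $\mb{RP}^2$ yields the same equidistribution of slopes and hence the same integral, so the two groups share the value; rewriting $8\pi=2S_{\mb S^2}=4S_{\mb{RP}^2}$ gives the remaining forms.

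The main obstacle is the rigorous passage from the discrete Ces\`aro mean to the slope-integral: one must prove the equidistribution of the ratios $m/l$ together with the correct normalizing count $|\mathcal B_R|$ — this is precisely where the overall constant is fixed, the analogue of the auxiliary Lemma~\ref{lem:klein.ricci.sum.over.f} — and one must make the uniform curvature bound explicit enough that both the thin exceptional families and the slowly converging low levels are provably negligible. Once this bookkeeping is in place, only the elementary polynomial integral above remains.
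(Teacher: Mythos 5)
Your argument is essentially the paper's own proof: the paper likewise evaluates $\lim_{R\to\infty}\frac{1}{(R+1)^2}\sum_{l\le R}\sum_{|m|\le l}C(e^0_3,e^m_l)$, discards the finitely many low levels where the asymptotics of Theorem~\ref{th:asymp.sec.curv.cpe} is not yet accurate, substitutes the limit profile $-\frac{525}{32\pi}q^4(1-q^2)$ with $q=m/l$, identifies the resulting average over $m$ as a Riemann sum converging to the same integral value $\frac{15}{8\pi}$, and closes with the identical observation that restricting to odd $l$ for $\mathbb{RP}^2$ does not change the limit. Your extra bookkeeping (the uniform bound on $|C(e^0_3,e^m_l)|$ and the removal of thin families such as $m=\pm1$ and $|m|=l$) is just the sphere analogue of Lemmas~\ref{lem:basis.element.counting} and~\ref{lem:bounded.sec.curv.klein} and is consistent with, if slightly more explicit than, the paper's terser treatment.
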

\begin{proof}
First note that for $l\leq R$, there are $\sum_{l=0}^{R}(2l+1) = (R+1)^2$  elements in the basis $\left\{ e^m_l \right\}$. 
According to Definition~\ref{def:norm.ric.curv} we need to find 
\[\Ric(e^0_3) = \lim\limits_{R\to \infty} S_R =\lim\limits_{R \to \infty} \frac{1}{(R+1)^2 } \sum_{l=0}^{R} \sum_{m=-l}^{l} C(e^0_3, e^m_l), \quad e^m_l = \frac{1}{\sqrt{l(l+1)}}\sg(Y^m_l) .\]
As in the proof of Theorem~\ref{th:ric.klein}, we remove a finite set of elements from the sum with $l < R_1$ without changing the value of the sequence $S_R$, such that for every $l\geq R\geq R_1$ one can apply Theorem~\ref{th:asymp.sec.curv.cpe}. Thus for $q = \lim\limits_{l\to \infty} \frac{m}{l}$ one has
\[ \Ric(e^0_3) = \lim\limits_{R\to \infty} -\frac{1}{(R+1)^2 } \sum_{l=R_1}^{R} \frac{2l+1}{2l+1} \sum_{m=-l}^{l}\frac{525}{32\pi}q^4(1-q^2)\,.  \]
 For each $l$, the sum 
\[ \frac{1}{2l+1} \sum_{m=-l}^{l} \frac{525}{32\pi}\left(\frac{m}{l}\right)^4 \left(1-\left(\frac{m}{l}\right)^2 \right)  \] 
is a Riemann sum over an evenly spaced partition. The integral of the corresponding  function is $15/8\pi$, hence the Riemann sums will converge to $15/8\pi$ as $l\to \infty$. Therefore
\[ \Ric(e^0_3) = \lim\limits_{R\to \infty} -\frac{15}{8\pi}\frac{1}{(R+1)^2 } \sum_{l=R_1}^{R} (2l+1) = -\frac{15}{8\pi}.  \]

When calculating the normalized Ricci curvature in $\SDiff(\mb{RP}^2)$ we only consider the basis elements $e^m_l$ where $l$ is odd. This removes half of the basis elements, but does not change the limit value of the average.
\end{proof}

\subsection{Ricci curvature of SDiff$(\mathbb S^2)$}
Recall that Theorem~\ref{th:sec.curv.cpe}  applies to both $\SDiff(\mb{RP}^2)$ and $\SDiff(\mb S^2)$, giving an explicit formula for  sectional curvatures in planes containing the vector field  ${e^0_3 =\frac{1}{8}\sqrt{\frac{21}{\pi}}\big( 5\cos^2\theta - 1 \big)\frac{\p}{\p \phi} }$ on the sphere. In~\cite{lukatskii_curvature_1979}, Lukatskii obtained a similar formula for  sectional curvatures in planes containing the vector field $e^0_2 =\sqrt{\frac{15}{8\pi}}  \cos\theta \frac{\p}{\p \phi}$. Namely, those curvatures are given by
\begin{equation}\label{eq:lukatskii.formula}
\resizebox{0.92\hsize}{!}{$%
C(e^0_2 , e^m_l) = \frac{15m^2}{32\pi} \left( (1-c_l^2)\big(a^m_l b_l + \frac{a^m_{l+1}}{b_{l+1}}\big) +2(1+c_l)(a^m_l + a^m_{l+1})-3\big(\frac{a^m_l}{b_l} + a^m_{l+1 } b_{l+1}\big)  \right)\,,%
$}
\end{equation}
where \[c_l = \frac{6}{l(l+1)}, \quad b_l = \frac{l+1}{l-1}, \quad a^m_l = \frac{l^2-m^2}{4l^2-1}. \]
The vector field $e^0_2$ on the sphere $\mb S^2$ does not satisfy $I_*v = v\circ I$ since $l$ is even, and  therefore it does not descend to a vector field on $\mb{RP}^2$. 
Yet, it is interesting to compare  the values of the normalized Ricci curvatures $\Ric(e^0_2)$ with $\Ric(e^0_3)$
for the sphere case. The following theorem gives the asymptotic values of sectional curvatures $C(e^0_2, e^{m_l}_l)$
 as $l\to \infty$ for sequences  with $\lim\limits_{l\to \infty}\frac{m_l}{l} = q$. 

\begin{theorem}\label{th:asymp.sec.curv.cge}
Consider a sequence of basis elements $\{e^{m_l}_l\}_{l\in \mb N}$, where $|m_l|\leq l$ and  ${\lim\limits_{l\to \infty} \frac{m_l}{l} = q}$. Then
\[\lim\limits_{l\to \infty} C(e^0_2, e^{m_l}_l) = -\frac{15}{8\pi}q^4\,.  \]
\end{theorem}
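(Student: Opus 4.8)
The plan is to follow the route used for Theorem~\ref{th:asymp.sec.curv.cpe}: start from the closed form~\eqref{eq:lukatskii.formula}, substitute $m=m_l$, and perform an asymptotic expansion in $1/l$ along the sequence, abbreviating $q_l:=m_l/l\to q$. First I would record the elementary expansions of the building blocks as $l\to\infty$. The factor $c_l=6/(l(l+1))=6/l^2+O(1/l^3)$ is already of order $1/l^2$, so $(1-c_l^2)=1+O(1/l^4)$; the ratios expand as $b_l=(l+1)/(l-1)=1+2/l+2/l^2+O(1/l^3)$ and, exactly, $b_{l+1}=(l+2)/l=1+2/l$, together with the corresponding expansions of $1/b_l$ and $1/b_{l+1}$; and the coefficients $a^{m_l}_l=(l^2-m_l^2)/(4l^2-1)$ and $a^{m_l}_{l+1}=((l+1)^2-m_l^2)/(4(l+1)^2-1)$ each tend to $(1-q^2)/4$, with corrections computed to order $1/l^2$ in terms of $q_l$. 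For brevity write $a_l:=a^{m_l}_l$ and $a_{l+1}:=a^{m_l}_{l+1}$. Inserting these turns the bracket in~\eqref{eq:lukatskii.formula} into a rational function of $q_l$ and $1/l$.

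The crucial structural observation is that the three grouped quantities in~\eqref{eq:lukatskii.formula}, namely $a_l b_l+a_{l+1}/b_{l+1}$, $a_l+a_{l+1}$, and $a_l/b_l+a_{l+1}b_{l+1}$, not only share the common limit $(1-q^2)/2$ but in fact agree through order $1/l$ (their $1/l$ coefficients all equal $q_l^2/2$). Since these three groups enter~\eqref{eq:lukatskii.formula} with weights $1$, $2$, and $-3$, whose sum vanishes, the bracket is annihilated through order $1/l$ and is therefore of order $1/l^2$. This is precisely the cancellation that renders the product with the diverging prefactor $\tfrac{15 m_l^2}{32\pi}\sim\tfrac{15 q^2}{32\pi}\,l^2$ convergent: the three groups first differ at order $1/l^2$, and it is that surviving $1/l^2$ coefficient of the bracket---a polynomial in $q_l$---which, multiplied by $\tfrac{15 m_l^2}{32\pi}$ and evaluated using $m_l^2=q_l^2 l^2$ with $q_l\to q$, produces the asserted limit $-\tfrac{15}{8\pi}q^4$.

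The main obstacle is entirely bookkeeping: because the first two orders cancel identically, the answer is dictated by the single $1/l^2$ term, so every expansion above must be kept accurate to that order and the many cross terms collected without error (this is the ``tedious computation'' already flagged for Theorem~\ref{th:asymp.sec.curv.cpe}). A secondary technical point is to justify replacing $q_l$ by its limit $q$ in the final step: for each fixed large $l$ the quantity $l^2$ times the bracket is a rational function of $q_l$ with no pole for $q_l\in[-1,1]$, and it converges uniformly on this interval, so continuity legitimizes the substitution $q_l\to q$. Passing to the limit then gives $\lim_{l\to\infty}C(e^0_2,e^{m_l}_l)=-\tfrac{15}{8\pi}q^4$.
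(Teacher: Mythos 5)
Your overall route is the same as the paper's: both start from \eqref{eq:lukatskii.formula}, group the expression into the three sums $A_2=a^{m_l}_lb_l+a^{m_l}_{l+1}/b_{l+1}$, $A_1=a^{m_l}_l+a^{m_l}_{l+1}$, $A_0=a^{m_l}_l/b_l+a^{m_l}_{l+1}b_{l+1}$, and exploit the fact that $A_2+2A_1-3A_0$ is of order $1/l^2$ because the three groups agree through order $1/l$. Your identification of the common $1/l$-coefficient $q_l^2/2$ is correct, the surviving $1/l^2$-coefficient of $A_2+2A_1-3A_0$ is indeed $-4q_l^2/l^2$, and multiplying by $\tfrac{15m_l^2}{32\pi}$ with $m_l^2=q_l^2l^2$ gives $-\tfrac{15}{8\pi}q^4$. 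The remark about replacing $q_l$ by $q$ at the end is fine.

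The gap is in your treatment of the $c_l$-dependent weights. You dispose of $c_l$ by noting $1-c_l^2=1+O(1/l^4)$ and then assert that the three groups enter with weights $1$, $2$, $-3$. But the middle weight in \eqref{eq:lukatskii.formula} is $2(1+c_l)$, and the discarded term $2c_lA_1$ is $c_l=O(1/l^2)$ times the nonzero limit $A_1\to(1-q^2)/2$ --- exactly the critical order $1/l^2$ that survives multiplication by the prefactor $\sim q^2l^2$. Kept as written, it contributes $\tfrac{45}{16\pi}q^2(1-q^2)$ to the limit and would change the answer; ``$c_l\to 0$, hence negligible'' is not a valid step at the order that determines the limit. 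The paper's proof handles this by rewriting the bracket as $(A_2+2A_1-3A_0)+2c_l(A_1-A_2)+c_l^2A_2$ and using that $A_1-A_2=O(1/l^2)$, so the correction is $O(1/l^4)$; note, however, that this rewriting is the expansion of $(1-c_l)^2A_2+2(1+c_l)A_1-3A_0$, i.e.\ it tacitly reads the first coefficient in \eqref{eq:lukatskii.formula} as $(1-c_l)^2$ rather than the printed $(1-c_l^2)$ (the $(1-c_l)^2$ form is the one consistent with the $\langle\delta,\delta\rangle$ term in \eqref{eq:C.short.formula} and with the $\big(\tfrac{12}{l(l+1)}-1\big)^2$ coefficient in Theorem~\ref{th:sec.curv.cpe}). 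To close your argument you must either adopt this pairing of the two $c_l$-corrections and show they cancel to the relevant order, or first settle which form of the coefficient is correct.
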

\begin{proof} 
In \eqref{eq:lukatskii.formula} denote 
\[  A_0 = \frac{a^m_l}{b_l} + a^m_{l+1 } b_{l+1}, \quad  A_1 = a^m_l + a^m_{l+1}, \quad A_2 = a^{m}_lb_l + \frac{a^{m}_{l+1} }{b_{l+1}}.\]
Then Equation \eqref{eq:lukatskii.formula} can  be rewritten as a second order polynomial in $c_l = O(1/l^2)$:
\[ C(e^0_2, e^{m_l}_l) = \frac{15m_l^2}{32\pi}\left((A_2 +2A_1 -3A_0) + 2c_l (A_1 - A_2)+c_l^2 A_2  \right) . \]
As $l\to \infty$ the terms with $c_l$ and  $c_l^2$ both vanish. Expanding the rest, using $\frac{m_l}{l} = q + O(1/l)$
and simplifying, one obtains:
\[C(e^0_2, e^{m_l}_l)  =  \frac{15m_l^2}{32\pi}\Big( -\frac{4q^2}{l^2} \Big) + O(1/l)  = -\frac{15}{8\pi}q^4+ O(1/l) .\]
\end{proof}
\begin{corollary}\label{cor:ricci.s2.g.direction}
The normalized Ricci curvature of $\SDiff(\mb S^2)$ in the direction $e^0_2$ is given by
\[\Ric(e^0_2) = -\frac{3}{4\pi} = -\frac{3}{S_{\mb S^2}}.\]
\end{corollary}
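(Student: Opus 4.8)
The plan is to mirror the proof of Corollary~\ref{cor:ricci.rp2} almost verbatim, since the only difference between $\Ric(e^0_2)$ and $\Ric(e^0_3)$ lies in the asymptotic function being averaged. First I would invoke Definition~\ref{def:norm.ric.curv}: the normalized Ricci curvature is the limit of the averages $S_R = \frac{1}{(R+1)^2}\sum_{l=0}^{R}\sum_{m=-l}^{l} C(e^0_2, e^m_l)$, using that the number of spherical harmonics with index at most $R$ is $\sum_{l=0}^R (2l+1) = (R+1)^2$. As in the proof of Theorem~\ref{th:ric.klein}, since the sectional curvatures are uniformly bounded one may discard a finite initial block $l < R_1$ without affecting the limit, so that Theorem~\ref{th:asymp.sec.curv.cge} applies uniformly to every surviving term.

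Next I would replace $C(e^0_2, e^m_l)$ by its asymptotic value $-\frac{15}{8\pi}(m/l)^4$ from Theorem~\ref{th:asymp.sec.curv.cge}, turning the inner sum over $m$ into a Riemann sum. For each fixed $l$, the expression
\[
\frac{1}{2l+1}\sum_{m=-l}^{l} \frac{15}{8\pi}\left(\frac{m}{l}\right)^4
\]
is a Riemann sum over an evenly spaced partition of $[-1,1]$ for the function $q \mapsto \frac{15}{8\pi} q^4$, and it converges as $l\to\infty$ to the integral $\frac{15}{8\pi}\int_{-1}^{1} q^4\,dq = \frac{15}{8\pi}\cdot\frac{2}{5} = \frac{3}{4\pi}$. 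Substituting this back and using $\sum_{l=R_1}^{R}(2l+1) \sim (R+1)^2$, the prefactor $\frac{1}{(R+1)^2}$ cancels the growth and yields
\[
\Ric(e^0_2) = \lim_{R\to\infty} -\frac{3}{4\pi}\cdot\frac{1}{(R+1)^2}\sum_{l=R_1}^{R}(2l+1) = -\frac{3}{4\pi}.
\]
Finally I would rewrite this in terms of the sphere's area $S_{\mb S^2} = 4\pi$ to obtain $-\frac{3}{4\pi} = -\frac{3}{S_{\mb S^2}}$, matching the stated form.

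The computation here is genuinely routine, so the only real subtlety I anticipate is justifying the interchange of limits, namely that replacing $C(e^0_2,e^m_l)$ by its $l\to\infty$ asymptotic inside the double average is legitimate. The key point is that the convergence in Theorem~\ref{th:asymp.sec.curv.cge} carries an $O(1/l)$ error that is uniform in the slope $q$, so the accumulated discrepancy across the inner sum of $2l+1$ terms is $O(1)$ per value of $l$, hence $O(R)$ after summing over $l \le R$; divided by $(R+1)^2$ this contributes $O(1/R) \to 0$. Since the analogous step in Corollary~\ref{cor:ricci.rp2} is handled identically, no new difficulty arises, and the argument differs from that corollary only in that the averaged function is $q^4$ rather than $q^4(1-q^2)$, producing the integral value $3/4\pi$ instead of $15/8\pi$.
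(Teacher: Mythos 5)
Your argument is precisely the paper's intended one: Corollary~\ref{cor:ricci.s2.g.direction} is stated without a separate proof as the direct analogue of Corollary~\ref{cor:ricci.rp2}, and you reproduce that proof verbatim with the asymptotic profile $q^4$ from Theorem~\ref{th:asymp.sec.curv.cge} replacing $q^4(1-q^2)$, including the same discarding of a finite block $l<R_1$ and the same Riemann-sum evaluation, landing on the stated value $-\tfrac{3}{4\pi}=-\tfrac{3}{S_{\mathbb S^2}}$. One caveat you inherit from the paper's own template: the quantity $\tfrac{1}{2l+1}\sum_{m=-l}^{l}f(m/l)$ converges to the \emph{mean} value $\tfrac12\int_{-1}^{1}f(q)\,dq$ rather than to $\int_{-1}^{1}f(q)\,dq$ (test $f\equiv1$, or note $\tfrac{1}{2l+1}\sum_{m=-l}^{l}(m/l)^4\to\tfrac15\neq\tfrac25$), so taken literally this step gives $-\tfrac{15}{8\pi}\cdot\tfrac15=-\tfrac{3}{8\pi}$; since the identical identification underlies the paper's computation of $\Ric(e^0_3)$, this is a shared normalization issue rather than a defect specific to your write-up.
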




\section{Unreliability of weather forecasts on the Klein bottle and  projective plane}\label{sect:weather}
Following Arnold~\cites{arnold_mathematical_1989, arnold_topological_2021} we make the following assumptions:
\begin{itemize}
\item The earth has the shape of the Klein bottle or  projective plane, respectively.
\item The atmosphere is a two-dimensional homogeneous incompressible inviscid fluid.
\item The motion of the atmosphere is approximately a ``trade wind,'' which we will model by an appropriate vector field $v$.
\end{itemize}
For now, let $M$ be a compact Riemannian surface. If a surface is negatively curved, this leads to the exponential divergence of the geodesics on it.
The {\it characteristic path length} is the average path length on which a small error in the initial condition of a geodesic on 
$\SDiff({M})$ grows by a factor of $e$, see~\cite{arnold_mathematical_1989}. For a geodesic 
with an initial vector $v$ its characteristic path length 
is given by $s_0 = {1}/{\sqrt{-C_0(v)}}$,
where $C_0(v)$ is the ``average sectional curvature" in the planes containing $v$. Below we 
assume that this average curvature on the group $\SDiff({M})$ is the  Ricci curvature $\Ric(v)$ in the $v$-direction and then
$$
s_0 = {1}/\sqrt{-\Ric(v)}\,.
$$

Recall that Arnold's reasoning in~\cite{arnold_mathematical_1989} for the instability of the earth atmosphere is based on the following consideration. Let a vector field $v$ on $M$ have  $L^2$-norm $\norm{v} $ and  the ``average sectional curvature" $C_0(v)$.
Then the time it takes for our fluid or atmospheric flow (corresponding to the geodesic on $\SDiff({M})$ with initial vector $v$)  to travel the  characteristic path length $s_0$ is the characteristic time $t_0:=s_0/\norm{v} $, while for this time the errors grow by a factor of $e$.

One also assumes that the vector field $v$ has many periodic trajectories on $M$, and its fastest particles have a typical period
$T:=s_\text{orb}/|v|_\text{rapid}$, where $s_\text{orb}$ stands for typical lengths of periodic orbits of those particles with speed $|v|_\text{rapid} $. 
Then during the characteristic time $t_0$ the fastest particles make $t_0/T$ part of the full period.
This implies that the error in the initial condition will grow by a factor of $e^\alpha$,
where 
\begin{equation}\label{eq:alpha}
\alpha:=\frac{T}{t_0}= \frac{s_\text{orb}/|v|_\text{rapid}}{s_0/\norm{v} }=
\frac{s_{\text{orb}}\norm{v} \sqrt{-\Ric(v)} }{|{v}|_{\text{rapid}}  } \,,
\end{equation}
after each full period of the fastest particle.

To apply this consideration to the earth-like atmosphere one needs to rescale the units. In~\cite{arnold_mathematical_1989}
one assumes that the ``equator's circumference" of the compact Riemannian surface ${M}$ is equal to the earth's equator of $ 40~000$km, which will be the orbit length $s_{\text{orb}}$ of the fastest particles, 
and sets the vector field $v$ to be a ``trade wind'' current with maximal velocity of $100$km/h. 
This implies that the orbit time (i.e. the period) for the fastest particles in the earth atmosphere is $400$ hours, and hence
the number of periods of those particles per month is 
$$
n= \frac{30\cdot 24}{T_{\text{in hours}} } = \frac{30\cdot 24}{ \frac{40~000\text{km}}{100\text{km/h}} }
$$
Therefore, if at the initial moment, atmospheric measurements are known with an error $\epsilon$, the magnitude of the error of prediction after $N$ months would be $\epsilon \cdot 10^{kN}$, where $k$ is
\[k = \alpha \cdot n \cdot \log_{10}e\, . \]
Thus the value of $k$ tells us how many more digits of accuracy we need to know today in order to predict the weather on $M$
for 1 month  for a typical trade wind $v$. 

\begin{remark}
{\rm 
The values  of $\alpha$ and $k$ depend on properties of the chosen ``trade wind''. In particular,  $\alpha$ is proportional to the ratio of the average speed of all particles over  the fastest ones. Hence the magnitude of error will also depend on this ratio. 

Recall that for  a vector field $v$ on a non-orientable $ M$ we use the isometric normalization, i.e. $\norm{v}^2 =\norm{\tilde v}^2 $ for  the lifted field $\tilde v$ on the orientation double cover $\tilde M$. (Note that the value of $\alpha$ is not affected by the normalization, as any rescaling changes both the norm of $v$ and the sectional curvatures, but not the product  $\norm{v}\sqrt{-\Ric(v)}$, see Remark \ref{rem:norm}.)
\smallskip

However, for a non-orientable manifold there arises another interesting phenomenon.
Recall that $ |{v}|_{\text{rapid}}$ is  defined as the maximal average
speed $ |{v}|_{\text{rapid}} = \sup \{ \frac{1}{b-a} \int_a^b |\dot{\gamma}(t))|dt\}$ on $v$-trajectories $\gamma$.
We will be looking at the periodic orbits of the field $v$ on $M$. Such closed curves on $M$ can be of two types: either $i)$ the orientation of $M$ does not change after travelling along the orbit, or  $ii)$ it does. 
In the  case $i)$, the fastest particles of the trade wind  fly mostly along orbits that do not change the orientation of $M$, and one may assume that the lengths of their orbits on $M$ and on its cover $\tilde M$ coincide, $\tilde s_\text{orb}= s_\text{orb}$.

On the other hand, in the case $ii)$,  
a neighbourhood of  an orientation-changing orbit looks like a M\"obius band, while the orbit itself is half as long as the neighbouring closed ones. Respectively the lift of such an orbit to $\tilde M$ is not closed, being a half of the closed 
orbit for $\tilde v$. 
Thus if the orbits of the fastest particles lie in the vicinity of an orientation-changing trajectory, one may assume that 
$\tilde s_\text{orb}= 2 \, s_\text{orb}$, i.e. the length of the equator of $M$ (for the trade wind field $v$) is equal to half the length of the equator of the cover $\tilde M$. 
}
\end{remark}


\begin{remark}
{\rm 
Intuitively the exponent $k$ should not depend on the length of the orbit of the fastest particle $s_{\text{orb}}$: indeed,  $\alpha$ is proportional to $s_{\text{orb}}$, as doubling the orbit length doubles the number of characteristic times per period, while the number of periods of the fastest particles per month is inversely proportional to the orbit length $s_{\text{orb}}$.

What it does depend upon is the rescaling factor $c_M$ relating the metric on the manifold $M$ with distance on the earth, 
$\hat g = {c^2_M} g$. Combining this with  the above one obtains, after certain simplifications,
$$
k=\frac{ \norm{v} \sqrt{-\Ric(v)} }{|{v}|_{\text{rapid}} \,   c_M}\cdot 30\cdot 24\cdot 100 \log_{10}e \, .
$$

Note that there are different ways to introduce rescaling factor $c_M$. For instance, Arnold suggested that the equator $E_M$ of the manifold $M$,  should have length $40~000$km, and hence $c_M = {40~000\text{km}}/{E_M}$. The value of the equator length for a non-orientable manifold is an ambiguous notion, as it can be defined by longest loops preserving or changing its orientation. 
Alternatively, one can relate the areas of $M$ and the earth, which might be more relevant for the non-orientable case, as we discuss below.
}
\end{remark}


\subsection{Weather forecasts on the Klein bottle}
If the earth were of the shape of the Klein bottle a natural model for the ``trade wind'' would be the vector field (corresponding to a stream function)
\begin{equation}\label{eq:tradewind}
v = \sin(x_2)\frac{\p}{\p x_1} \quad \longleftrightarrow \quad \xi_{(0,1)}= -\cos(x_2) = -\frac{1}{2}(e_{(0,1)} + e_{(0,-1)} ),
\end{equation}
see Figure~\ref{fig:vf}. By Theorem~\ref{th:ric.klein} the normalized Ricci curvatures of $\SDiff(\mb K)$ in the planes containing the ``trade wind'' $\xi_{(0, 1)}$ is $\Ric_{\mb K}(\xi_{(0, 1)}) =  -\frac{3}{16 S_{\mb K}}  $. The normalized Ricci curvature of $\SoDiff(\mb T)$ in the same direction is $\Ric_{\mb T}(\xi_{(0, 1)}) = -\frac{3}{8 S_{\mb T} }$, since $S_{\mb T} = 2S_{\mb K}$, also see~\cite{lukatskii_curvature_1984}.

The $L^2$-norm of $v$ on $\SDiff(\mb K)$ is $\norm v^2=S_{\mb T}/2=S_{\mb K}$.
The fastest particles in the ``trade wind''~\eqref{eq:tradewind} on the Klein bottle have the speed  $|{v}|_{\text{rapid}} = 1$. 
An orbit on ``the equator'' of the Klein bottle (or on the torus) can be parametrized by 
\[\gamma(t) = (x_1(t), \, x_2(t))  = (t, \, \pi/2) , \quad t\in [0,2\pi] \,.\]

Following \cite{arnold_mathematical_1989}, we assume that the ``equator's circumference'' of the torus and Klein bottle is equal to the earth's equator of $40~000$km. Since $\gamma$ has length $2\pi$ on $\mb K$ and $\mb T$, we get the rescaling factor $c_{\mb K} = c_{\mb T} = {40~000\text{km}}/{2\pi}. $
Therefore for initial atmospheric measurements with an error $\epsilon$, the magnitude of the error of prediction after $N$ months would be $\epsilon \cdot 10^{kN}$, where $k$ is
\begin{equation*}
k_{\mb K} = k_{\mb T} = \frac{\sqrt{\frac{S_{\mb T}}{2}} \sqrt{\frac{3}{8 S_{\mb T}} } }{ 1\cdot \frac{40~000}{2\pi}  } 30\cdot 24\cdot 100\log_{10}e \simeq 2.1 \, .
\end{equation*}
We conclude that to predict the weather on the Klein bottle (or on the torus) with the ``equator's circumference'' equal to that of the earth's, for such a typical trade wind, for 2 months, one needs to know it today with $4$ more digits of accuracy. 

\begin{remark}
{\rm 
In~\cite{arnold_mathematical_1989}, Arnold obtained $k_{\mb T} \simeq 2.5$ (and hence  $5$ more digits of accuracy
for a two-month weather prediction). This was based on estimating the ``average sectional curvature'' $C_0(v)$ in planes containing the ``trade wind'' $v=\xi_{(0,1)}$ to be $C_0 = -\frac{1}{2 S_{\mb T}}$. Above, instead, we used the normalized Ricci curvature
to interpret the ``average sectional curvature''  value $C_0 = \Ric(\xi_{(0,1)}) = -\frac{3}{8 S_{\mb T}}$. 
}
\end{remark}


\subsection{Weather forecasts on the real projective plane}
If the earth were of the shape of the real projective plane it would be natural to  model  the ``trade wind'' by the vector field 
\begin{equation}\label{eq:tradewind.rp2}
v= e^0_3 =\frac{1}{8}\sqrt{\frac{21}{\pi}}\big( 5\cos^2\theta - 1 \big)\frac{\p}{\p \phi} \,,
\end{equation}
see Figure~\ref{fig:sphere_tradewind_new}. 
This field is normalized, so that $\norm{v}^2=1$. The normalized Ricci curvature of $\SDiff(\mb{RP}^2)$ in the planes containing this vector field is $\Ric_{\mb{RP}^2} (e^0_3) = -\frac{15}{4S_{\mb{RP}^2}} $, according to Corollary~\ref{cor:ricci.rp2}.
The normalized Ricci curvatures of $\SDiff(\mb S^2)$ in the same direction is 
$  \Ric_{\mb{S}^2} (e^0_3) = -\frac{15}{2 S_{\mb{S}^2}} ,$ since $2S_{\mb{RP}^2}=S_{\mb{S}^2}$.
The orbits of $v = e^0_3$ are given by ``circles of latitude" $z={\rm const}$ on the sphere, or $\theta={\rm const}$ in spherical coordinates, and on each orbit the speed is constant. 

The ``fastest particles'' are the ones with highest average speed:  they  maximize\\ ${\frac{1}{b-a}\int_a^b |5\cos^2(\theta) -1|\sin (\theta) \, dt }$. 
One can show that the fastest particles in the ``trade wind''~\eqref{eq:tradewind.rp2} on the sphere and on the real projective plane  (along orbits not changing orientation) have the speed  $|{v}|_{\text{rapid}}  = \frac{2}{3} \sqrt{\frac{14}{5 S_{\mb{RP}^2 } }}$. The orbits of such fastest particles have length $\frac{4\pi }{\sqrt{15}} $ at the latitude $z= \frac{2}{\sqrt{15}} $, see Figure~\ref{fig:sphere_tradewind_new}. 

However, note that the equator for ${\mb{RP}^2 }=\mb{S}^2/I$ is equal to $\pi$: it is  twice as short 
as that of $\mb{S}^2$, as there are   orbits of the field $v$ along which the orientation changes. They are not the fastest ones, but they affect the equator length, and hence the scaling factor $c_M$. 
\smallskip

Computations for $\mb{S}^2 $ and ${\mb{RP}^2 }$ (similar to the above for $\mb{T}$ and ${\mb{K}}$) give that 
$k_{\mb S^2} \simeq 8.5$ and $k_{\mb{RP}^2} \simeq 4.3$. Note that these values differ by a factor of 2 due to the above-mentioned equator shortening. As we discussed the corresponding values of $k$ describe the loss of  accuracy in weather predictions in terms of the number of digits per month. 

\medskip

The above  computations are summarized in the following table:

\begin{center}
\begin{tabular}{l | l l l l }
${ M } $ 	& $\mb K$ 		&	$\mb T$  		&  $\mb{RP}^2$  &  $\mb{S}^2$\\
$\norm{v}$ 	& $\sqrt{\frac{S_{\mb T}}{2}} $		& $\sqrt{\frac{S_{\mb T}}{2}} $				& $1$				& $1$\\[7pt]
$\Ric(v)$	&		$-\frac{3}{8S_{\mb T}}$		&	$-	\frac{3}{8S_{\mb T}}$					&				$-\frac{15}{2S_{\mb{S}^2 } }$			&$-\frac{15}{2S_{\mb{S}^2 } }$			\\[7pt]
$|v|_{\text{rapid}}$	& $1$						&				$1$			&	$\sqrt{\frac{28}{5S_{\mb S^2}}}$						&$\sqrt{\frac{28}{5S_{\mb S^2}}}$		\\[7pt]
${E_M}$ 	& $2\pi $		& $2\pi $				& $\pi $				& $2\pi $\\
$c_M$  &	 $\frac{40~000\text{km}}{2\pi}$& $\frac{40~000\text{km}}{2\pi}$ & $\frac{40~000\text{km}}{\pi}$	&$\frac{40~000\text{km}}{2\pi}$			\\[7pt]
$k$ &  $2.1$ & $2.1$  & $4.3$  &  $8.5$
\end{tabular}
\end{center}
\medskip

Thus assuming the above trade wind models, we conclude that to predict the weather on the projective plane for 2 months one needs to know it today with $8$ more digits  of accuracy, while on the sphere -- with $17$ more digits!  
 It is worth noting that such weather forecasts are  much more unreliable than on the torus or the Klein bottle. 

\begin{remark}\label{rem:lukatskii}
{\rm One should note that the predicted forecast trustworthiness is strongly affected by one's choice of $e^0_3$ as a ``trade wind'', cf.
~\cite{arnold_mathematical_1989}.  We argue however that $e^0_3$ is a more realistic approximation of the ``trade wind'' observed in the earths atmosphere than $e^0_2$ used before, see Figure \ref{fig:sphere_tradewind_new}.

In~\cite{lukatskii_curvature_1979} the results of calculations for the vector field
\[e^0_2 =\sqrt{\frac{15}{8\pi}}  \cos\theta \frac{\p}{\p \phi}, \]
taken as the ``trade wind'', are as follows: 
$\Ric(e^0_2)=-\frac{3}{S_{\mb S^2}}$, see Corollary~\ref{cor:ricci.s2.g.direction}; 
$\norm{v} = 1,$ and 
$|{v}|_{\text{rapid}} = \frac{1}{2} \sqrt{\frac{15}{8\pi }} =\frac{1}{2} \sqrt{\frac{15}{2 S_{\mb S^2} }} $. 
The sphere has the rescaling factor $c_{\mb S^2} = \frac{40~000\text{km}}{2\pi}$ and hence for $v =e^0_2$ we get
$$k_{\mb S^2} =  \frac{1\cdot \sqrt{\frac{3}{S_{\mb S^2 } } } }{ \frac{40~000}{2\pi} \frac{1}{2} \sqrt{\frac{15}{2 S_{\mb S^2} } }  } 30\cdot24\cdot100\log_{10}e  \simeq 6.2  \, ,$$ 
where an error grows by a factor $10^{kN}$ after $N$ months. In this case to predict the weather on the sphere for 2 months, one needs to know it today with $12$ more digits of accuracy. 

If for the ``average sectional curvature'' instead of $\Ric(e^0_2)$ one uses the estimate that $\Ric(e^0_2) \simeq \frac{1}{4} \inf C(e^0_2, e^m_l) = -\frac{15}{4\cdot 8\pi} $, as in~\cite{lukatskii_curvature_1979}, one obtains  $\tilde k_{\mb S^2}   \simeq 4.9$. This estimate for the sphere is more 
``inline" with Arnold's computations for the torus, where a similar estimate for the ``average sectional curvature'' was used, and one concludes
that to predict the weather on the sphere for 1 month, one needs to know it today with $5$ more digits of accuracy (and respectively with $10$ more digits for 2 months). 
}
\end{remark}

\begin{remark} 
{\rm Alternatively, instead of rescaling the equator's length, one can rescale the manifold's area. The area of the earth is  $S_{\text{earth}} = 5.1\cdot 10^8$ km$^2$. The corresponding rescaling factor $c_M$ for the length on $M$, is the square root of the ratios of the areas, i.e. 
$$
c_M = \sqrt{\frac{5.1\cdot 10^8\text{ km}^2}{S_M}}\,.
$$
The corresponding values of $S_M$ and $k$ are summarized here:
\begin{center}
\begin{tabular}{l | l l l l }
${ M } $ 	& $\mb K$ 		&	$\mb T$  		&  $\mb{RP}^2$  &  $\mb{S}^2$\\[7pt]
${S_M}$ 	& $2\pi^2 $		& $4\pi^2 $				& $2\pi $				& $4\pi $\\[7pt]
$k$ &  $2.6$ & $3.8$  & $6.0$  &  $8.5$
\end{tabular}
\end{center}
Note that the areas of the non-orientable manifolds and their orientation covers differ by a factor of 2, and hence the corresponding 
factors $c_M$ and, respectively, the exponents $k$ for the manifold and its cover differ by a factor $\sqrt 2$. 
}
\end{remark}


\appendix



\section{Lemma for Theorem~\ref{th:ric.klein} on the Ricci curvature}\label{app:technical.proofs}
\begin{lemma}\label{lem:klein.ricci.sum.over.f} Define $\lambda  =  {\rm arctan}\left(\frac{l_2}{l_1}\right)$, $\kappa =  {\rm arctan}\left(\frac{k_2}{k_1}\right)$ for indices $k,l$ and set
	\[f(\lambda) = \frac{\norm{k}^2}{8S_{\mb T}} \left(4\cos(2\lambda) \cos(2\kappa) - \cos(4\lambda) \cos(4\kappa)\right).  \] Then the average value of $f$ over all basis elements $\mcE_l \in \mathcal B$ vanishes, i.e. for $\mathcal{B}_R$ defined in~\eqref{eq:BR} 
\[ L =  \lim\limits_{R\to \infty}  \frac{1}{|\mathcal{B}_R|}  \sum_{\mcE_l \in \mathcal{B}_R}  f(\lambda) = 0. \]
\end{lemma}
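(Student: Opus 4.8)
The plan is to exploit that the index $k$ (and hence the angle $\kappa$) is fixed throughout, so that
\[ f(\lambda) = \frac{\norm{k}^2}{8S_{\mb T}}\bigl(4\cos(2\kappa)\cos(2\lambda) - \cos(4\kappa)\cos(4\lambda)\bigr) \]
is a fixed linear combination of the two angular functions $\cos(2\lambda)$ and $\cos(4\lambda)$, where $\lambda$ is the polar angle of the lattice point $l = (l_1, l_2)$. Since $f$ depends only on the direction $l/\norm{l}$ and not on $\norm{l}$, the problem reduces to understanding the angular distribution of the index set of $\mathcal{B}_R$ as $R\to\infty$.

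First I would establish the equidistribution of angles. The indices of $\mathcal{B}$ lie in the first quadrant $\mb N_0^2$, and for each such lattice point $l$ with $\norm{l}\leq R$ there are, away from thin sets, two basis elements $\mcE^{\Re}_l$ and $\mcE^{\Im}_l$ sharing the same angle $\lambda$. Counting lattice points in an angular sector $[\lambda_1,\lambda_2]\subset[0,\pi/2]$ intersected with the disc of radius $R$, a Gauss-type estimate gives the number of such points as $\tfrac12 R^2(\lambda_2-\lambda_1) + O(R)$, namely the sector's area plus a boundary error linear in $R$. Dividing by $|\mathcal{B}_R| = \tfrac{\pi}{2}R^2 + O(R)$ from Lemma~\ref{lem:basis.element.counting}, the proportion of basis elements with angle in $[\lambda_1,\lambda_2]$ tends to $\tfrac{2}{\pi}(\lambda_2-\lambda_1)$. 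Thus the angles equidistribute with respect to the uniform density $\tfrac{2}{\pi}\,d\lambda$ on $[0,\pi/2]$.

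Next, since $\cos(2\lambda)$ and $\cos(4\lambda)$ are continuous and bounded, a standard Riemann-sum argument (partition $[0,\pi/2]$ into finitely many equal sub-arcs, use uniform continuity on each, and let the mesh and then $R$ tend appropriately) converts the discrete average into an integral:
\[ L = \lim_{R\to\infty}\frac{1}{|\mathcal{B}_R|}\sum_{\mcE_l\in\mathcal{B}_R} f(\lambda) = \frac{2}{\pi}\int_0^{\pi/2} f(\lambda)\, d\lambda. \]
The discrepancy between $J^{\Re}$, $J^{\Im}$ and the full first-quadrant lattice consists only of points on the two coordinate axes, a thin set of size $O(R)$ which is absorbed into the error terms and does not affect the limit; likewise the two-to-one $\Re/\Im$ multiplicity is angle-independent and cancels in the ratio.

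Finally I would evaluate the integral. Because $\int_0^{\pi/2}\cos(2\lambda)\,d\lambda = \bigl[\tfrac12\sin(2\lambda)\bigr]_0^{\pi/2} = 0$ and $\int_0^{\pi/2}\cos(4\lambda)\,d\lambda = \bigl[\tfrac14\sin(4\lambda)\bigr]_0^{\pi/2} = 0$, and the coefficients $\cos(2\kappa)$, $\cos(4\kappa)$ are constants, both angular terms integrate to zero and hence $L=0$. The main obstacle is the equidistribution step: one must justify that the lattice points inside the quarter-disc become angularly uniform and control the boundary and thin-set contributions. Once that is in place the trigonometric integrals vanish, as $\cos(2\lambda)$ integrates over a half-period and $\cos(4\lambda)$ over a full period on $[0,\pi/2]$.
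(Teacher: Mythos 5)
Your proof is correct, and it takes a genuinely different route from the paper's. The paper's argument is a discrete symmetry/pairing one: it first reflects the index set through the ray $\lambda=\pi/4$ (i.e.\ $(l_1,l_2)\mapsto(l_2,l_1)$), under which $f(\lambda)+f(\pi/2-\lambda)=-2\cos(4\lambda)\cos(4\kappa)$ kills the $\cos(2\lambda)$ term, and then reflects through $\lambda=\pi/8$ to pair each remaining index with a nearby partner, controlling the residual sum by a Lipschitz bound $|\cos(4\lambda)+\cos(4\lambda_j)|\leq 4/\norm{j}$. You instead prove angular equidistribution of the lattice points directly, via the Gauss-type count that a sector of radius $R$ and opening $[\lambda_1,\lambda_2]$ contains $\tfrac12R^2(\lambda_2-\lambda_1)+O(R)$ lattice points, and then invoke the standard Weyl/Riemann-sum argument to replace the discrete average by $\tfrac{2}{\pi}\int_0^{\pi/2}f(\lambda)\,d\lambda$, which vanishes term by term. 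Your route is more robust: it applies verbatim to any Riemann-integrable angular function with zero mean against $\tfrac{2}{\pi}\,d\lambda$, and it avoids the paper's most delicate step, namely the claim that the $\pi/8$-reflection of a lattice point lies within $1/\norm{j}$ of an actual index of $\mathcal{B}_R$, which is only sketched there. What the paper's approach buys in exchange is that it stays entirely elementary and exploits the explicit trigonometric form of $f$, never needing the sector-counting asymptotics. One small presentational point: after stating the lattice-point count $\tfrac12R^2(\lambda_2-\lambda_1)+O(R)$ you should multiply by the two-fold $\Re/\Im$ multiplicity \emph{before} dividing by $|\mathcal{B}_R|=\tfrac{\pi}{2}R^2+O(R)$ to land on the density $\tfrac{2}{\pi}(\lambda_2-\lambda_1)$; you do note the multiplicity is angle-independent, and in any case the normalization constant is immaterial since the limiting integral is zero.
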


\begin{proof}[Sketch of proof]
For each $R$, the number
$\frac{1}{\big|\mathcal{B}_R\big|} \sum_{\mcE_l \in \mathcal{B}_R} f(\lambda )$,
is  the mean value of $f$  evaluated over $\lambda =  {\rm arctan}(l_2/l_1)$ of $\mcE_l \in \mathcal{B}_R$.  The angle $\lambda \in [0, \pi/2]$ since $(l_1, l_2) \in \mb N_0^2$. If $\lambda$ were a continuous parameter in $[0, \pi/2]$, then the mean value of $f$ over $[0, \pi/2]$ would be zero. However, the sum is finite  for each $R$ and its value will depend on the (measure theoretic) density of angles $\lambda =  {\rm arctan}(l_2/l_1)$, i.e. the density of lines through the origin intersecting a point $(l_1, l_2)$ with integer coefficients.

\begin{figure}
\centering
\includegraphics[width=0.4\textwidth]{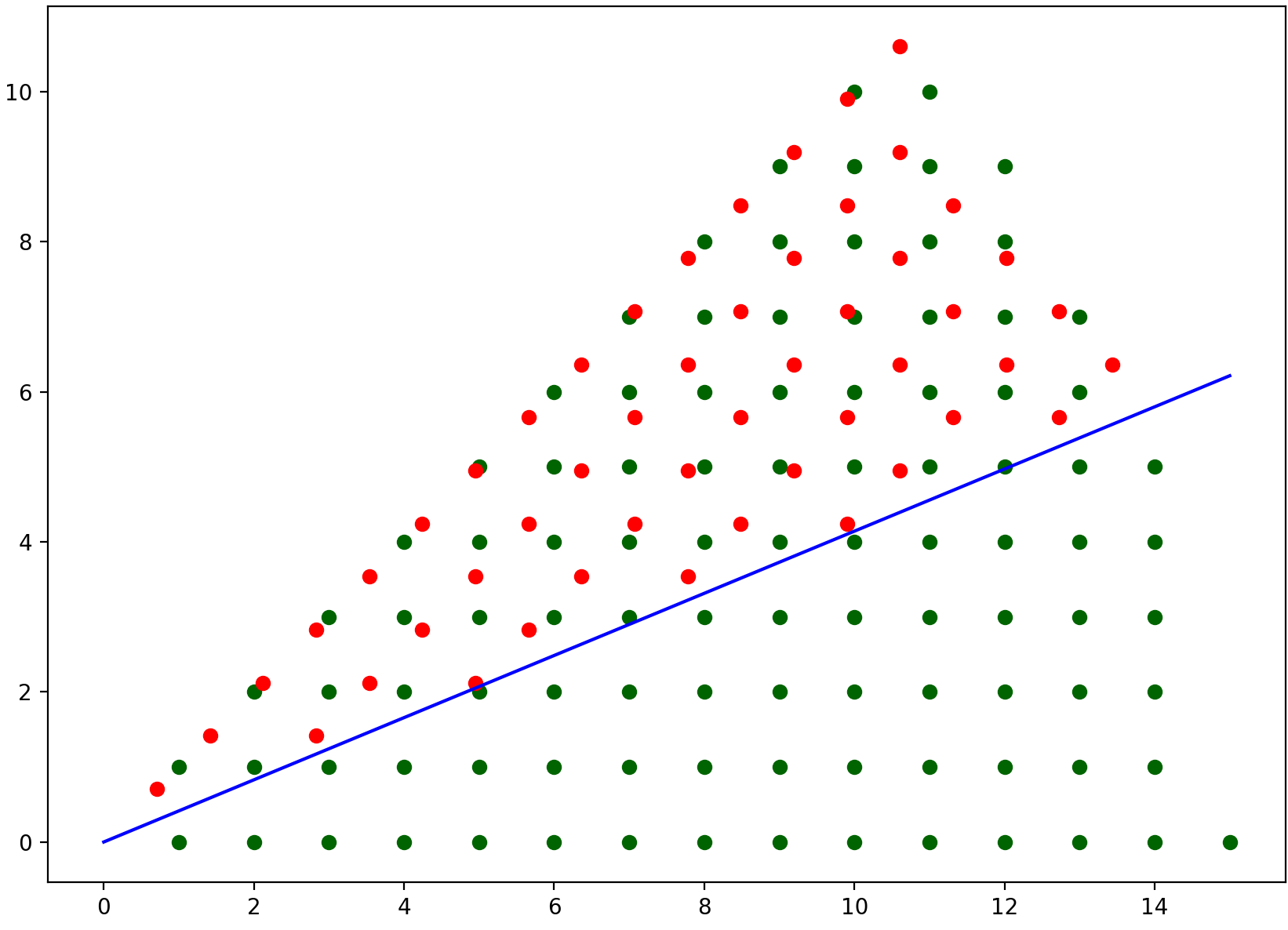}
\caption{ \tiny Reflection of the indices in $J^{\Im}$ through the ray $\theta = \pi/8$ for $\norm{l}\leq 15$ and $\lambda =  {\rm arctan}(l_2/l_1) \leq \pi/4$. The green points correspond to $l\in J^{\Im}$ and the red points to the points of $J^{\Im}$ with $\lambda < \pi/8$, reflected through the ray $\theta = \pi/8$, which is the ray in blue.}
\label{fig:pi8.reflection}
\end{figure}

We give a sketch of a more formal proof of the lemma. From Lemma~\ref{lem:basis.element.counting} the set $\mathcal{B}_R$ grows as $\frac{\pi}{2} R^2$. Subsets that grows as $\sim R$ can be removed from the sum over $f$, without changing the value of the sum, see Figure~\ref{fig:AR_complement}. Consider the indices $l = (l_1, l_2) \in \mb N^2$ such that $\mcE_l \in \mathcal B$. The indices are symmetric with respect to reflection through the ray $\lambda = \pi/4$ in $\mb N^2$ (modulo sets that grows as $\sim R$), mapping $ (l_1, l_2)$ into $(l_2, l_1)$. Since $\lambda' =  {\rm arctan}(l_1/l_2) = \pi/2 - \lambda$, then also $f(\lambda) + f(\lambda') = -2\cos(4\lambda)\cos(4\kappa)$. Thus
\begin{equation}\label{eq:first.estimate.cos.sum}
L = \lim\limits_{R\to \infty}   \frac{\norm{k}^2}{4S_{\mb T}\big|\mathcal{B}_R\big|}   \Big|   \sum_{ \substack{\mcE_l \in \mathcal B_R \\ 0\leq \lambda \leq \pi/4 } } \cos(4\lambda) \Big|.
\end{equation}
Now we reflect $l = (l_1, l_2)$  with $0\leq \lambda \leq \pi/4 $ through the ray $\lambda = \pi/8$ in $\mb N^2$ into points $\tilde{l}$, see Figure~\ref{fig:pi8.reflection}. The indices are not symmetric with respect to this reflection, but are uniquely paired with another index $j = (j_1, j_2)$ with $\pi/8 \leq \lambda_j =  {\rm arctan}(j_2/j_1) \leq \pi/4$, where $\mcE_j \in \mathcal B_R$ (modulo sets that grows as $\sim R$). The distance between the reflected point $\tilde{l}$ and its partner $j$ is less than $\frac{1}{\norm{j} }$. Hence
\[\big|\cos(4\lambda)  + \cos(4\lambda_j) \big|= \big|-\cos(4\tilde{\lambda} ) + \cos(4\lambda_j)\big| \leq 4\big|\tilde{\lambda} - \lambda_j\big| \leq 4\frac{1}{\norm{j} }.\]
Applying the above procedure to~\eqref{eq:first.estimate.cos.sum}, we obtain that 
\begin{equation*}
L = \lim\limits_{R\to\infty} \frac{\norm{k}^2}{4S_{\mb T}\big|\mathcal{B}_R\big|}   \sum_{ \substack{\mcE_l \in \mathcal{B}_R  \\ 0\leq \lambda \leq \pi/8 } } \left|\cos(4\lambda)  + \cos(4\lambda_j) \right| = 0,
\end{equation*}
since this is the average value of a function over a discrete set, where the limit of the function as $R\to \infty$ goes to zero. 

\end{proof}

\bibliography{NonorientCurv7.bib}

\end{document}